\newtheorem{theorem}{Theorem}
\theoremstyle{plain}
\newtheorem{definition}[theorem]{Definition}
\newtheorem{corollary}[theorem]{Corollary}
\newtheorem{lemma}[theorem]{Lemma}
\newtheorem{remark}{Remark}
\newtheorem{proposition}[theorem]{Proposition}
\numberwithin{equation}{section}
\def\eps{\varepsilon}
\def\ri{{\rm i}}
\newcommand{\R}{\mathbb{R}}
\newcommand{\N}{\mathbb{N}}
\newcommand{\Z}{\mathbb{Z}}
\newcommand{\cD}{{\mathcal D}}
\renewcommand{\phi}{\varphi}
\DeclareMathOperator{\const}{const.}
\DeclareMathOperator{\kernel}{Ker}
\DeclareMathOperator{\range}{Rg}
\DeclareMathOperator{\spann}{span}
\DeclareMathOperator*{\essinf}{ess\,inf}
\DeclareMathOperator*{\esssup}{ess\,sup}
\DeclareMathOperator{\sign}{sign}
\newcommand{\dist}{\text{\rm dist}}
\newcommand{\supp}{\text{\rm supp}}
\def\blem{\begin{lemma}}\def\elem{\end{lemma}}
\def\bthm{\begin{theorem}}\def\ethm{\end{theorem}}
\def\bcor{\begin{corollary}}\def\ecor{\end{corollary}}
\def\beq{\begin{equation}}\def\eeq{\end{equation}}
\begin{document}

\title[Surface gap soliton ground states for the NLS]{Surface gap soliton ground states for the nonlinear Schr\"odinger equation} 

\author{Tom\'{a}\v{s} Dohnal}
\address{T. Dohnal \hfill\break 
Institut f\"{u}r Wissenschaftliches Rechnen und Mathematische Modellbildung, Karlsruhe Institute of Technology (KIT)\hfill\break
D-76128 Karlsruhe, Germany}
\email{dohnal@kit.edu}

\author{Michael Plum}
\address{M. Plum \hfill\break 
Institut f\"ur Analysis, Karlsruhe Institute of Technology (KIT)\hfill\break
D-76128 Karlsruhe, Germany}
\email{michael.plum@kit.edu}

\author{Wolfgang Reichel}
\address{W. Reichel \hfill\break 
Institut f\"ur Analysis, Karlsruhe Institute of Technology (KIT), \hfill\break
D-76128 Karlsruhe, Germany}
\email{wolfgang.reichel@kit.edu}

\date{\today}

\subjclass[2000]{Primary: 35Q55, 78M30; Secondary: 35J20, 35Q60}
\keywords{Nonlinear Schr\"odinger Equation, surface gap soliton, ground state, variational methods, concentration-compactness, interface, periodic material}

\begin{abstract} 
We consider the nonlinear Schr\"{o}dinger equation
$$(-\Delta +V(x))u = \Gamma(x) |u|^{p-1}u, \quad x\in \R^n$$ 
with $V(x) = V_1(x), \Gamma(x)=\Gamma_1(x)$ for $x_1>0$ and $V(x) = V_2(x), \Gamma(x)=\Gamma_2(x)$ for $x_1<0$, where $V_1, V_2, \Gamma_1, \Gamma_2$ are periodic in each coordinate direction. This problem describes the interface of two periodic media, e.g. photonic crystals. We study the existence of ground state $H^1$ solutions (surface gap soliton ground states) for $0<\min \sigma(-\Delta +V)$. Using a concentration compactness argument, we provide an abstract criterion for the existence based on ground state energies of each periodic problem (with $V\equiv V_1, \Gamma\equiv \Gamma_1$ and $V\equiv V_2, \Gamma\equiv \Gamma_2$) as well as a more practical criterion based on ground states themselves. Examples of interfaces satisfying these criteria are provided. In 1D it is shown that, surprisingly, the criteria can be reduced to conditions on the linear Bloch waves of the operators $-\tfrac{d^2}{dx^2} +V_1(x)$ and $-\tfrac{d^2}{dx^2} +V_2(x)$. 
\end{abstract}

\maketitle


\section{Introduction}

The existence of localized solutions of the stationary nonlinear Schr\"{o}dinger equation (NLS)
\beq\label{E:NLS}
(-\Delta +V(x))u = \Gamma(x) |u|^{p-1}u, \quad x\in \R^n
\eeq
with a linear potential $V$ and/or a nonlinear potential $\Gamma$ is a classical problem of continued interest. The present paper deals with the existence of ground states in the case of two periodic media in $\R^n$ joined along a single interface, e.g. along the hyperplane $\{x_1=0\}\subset \R^n$. Both the coefficients $V$ and $\Gamma$ are then periodic on either side of the interface but not in $\R^n$. Exponentially localized solutions of this problem are commonly called surface gap solitons (SGSs) since they are generated by a surface/interface, and since zero necessarily lies in a gap (including the semi-infinite gap) of the essential spectrum of $L:=-\Delta + V$. Ground states in the case of purely periodic coefficients, where the solutions are refereed to as (spatial) gap solitons (GSs), were shown to exist in \cite{pankov} in all spectral gaps of $L$. The proof of \cite{pankov} does not directly apply to the interface problem due to the lack of periodicity in $\R^n$. Also, in contrast to the purely periodic case the operator $L$ in the interface problem can have eigenvalues \cite{doplurei,KOR05}. The corresponding eigenfunctions are localized near the interface so that it acts as a waveguide. In this paper we restrict our attention to ground states in the semiinfinite gap to the left of all possible eigenvalues, i.e. $0< \min \sigma(L)$. Using a concentration-compactness argument, we prove an abstract criterion ensuring ground state existence based on the energies of ground states of the two purely periodic problems on either side of the interface. We further provide a number of interface examples that satisfy this criterion. Moreover, in the case $n=1$ we give a sufficient condition for the criterion using solely linear Bloch waves of the two periodic problems.

\medskip

The physical interest in wave propagation along material interfaces stems mainly from the possibilities of waveguiding and localization at the interface. The problem with two periodic media is directly relevant in optics for an interface of two photonic crystals. Gap solitons in nonlinear photonic crystals are of interest as fundamental `modes' of the nonlinear dynamics but also in applications due to the vision of GSs being used in optical signal processing and computing \cite{Mingaleev02}. 

\medskip

The NLS \eqref{E:NLS} is a reduction of Maxwell's equations for monochromatic waves in photonic crystals with a $p$-th order nonlinear susceptibility $\chi^{(p)}$ when higher harmonics are neglected. In the following let $c$ be the speed of light in vacuum and $\eps_r$ the relative permittivity of the material. In 1D crystals, i.e., $\eps_r=\eps_r(x_1), \chi^{(p)}=\chi^{(p)}(x_1)$, equation \eqref{E:NLS} arises for the electric field ansatz $E(x,t)=(0,u(x_1),0)^T e^{\ri(kx_3 -\omega t)} + $c.c. and the potentials become $V(x_1) = k^2-\tfrac{\omega^2}{c^2}\eps_r(x_1), \Gamma(x_1)=\tfrac{\omega^2}{c^2} \chi^{(p)}(x_1)$. In 2D crystals, i.e., $\eps_r=\eps_r(x_1,x_2), \chi^{(p)}=\chi^{(p)}(x_1,x_2)$, the ansatz $E(x,t) = (0,0,u(x_1,x_2))^T e^{-\ri\omega t}+$c.c. leads to $V(x_1,x_2)=-\tfrac{\omega^2}{c^2}\eps_r(x_1,x_2)$ and $\Gamma(x_1,x_x)=\tfrac{\omega^2}{c^2}\chi^{(p)}(x_1,x_2)$. The physical condition $\eps_r>1$, valid for dielectrics, however, clearly limits the range of allowed potentials $V$. 

\medskip

On the other hand, the NLS is also widely used by physicists as an asymptotic model for slowly varying envelopes of wavepackets in 1D and 2D photonic crystals. In cubically nonlinear crystals the governing model is
\beq\label{E:NLS_dyn}
\ri \partial_{x_3}\phi + \Delta_\bot \phi + \tilde{V}(x_\bot) \phi + \Gamma(x_\bot)|\phi|^2\phi=0,
\eeq
where $x_\bot = x_1$ in 1D and $x_\bot = (x_1,x_2)$ in 2D, see e.g. \cite{Efremidis03,Kartashov06}. The ansatz $\phi(x) = e^{\ri kx_3} u(x_\bot)$ then leads to \eqref{E:NLS} with $V(x) = k-\tilde{V}(x)$. 

\medskip

GSs are also widely studied in Bose-Einstein condensates, where \eqref{E:NLS_dyn} is the governing equation for the condensate wave function without any approximation (with $x_3$ playing the role of time) \cite{Louis03}. It is referred to as the Gross-Pitaevskii equation and the periodic potential is typically generated by an external optical lattice.

\medskip

SGSs of the 1D and 2D NLS have been studied numerically in a variety of geometries and nonlinearities including case where only $V$ has an interface and $\Gamma$ is periodic in $\R^n$ \cite{Kartashov06,Makris06}, or vice versa \cite{do_pel,Blank_Dohnal} (1D), or where both $V$ and $\Gamma$ have an interface \cite{Kartashov08}.

\medskip

Optical SGSs have been also observed experimentally in a number of studies, as examples we list: SGSs at the edge of a 1D \cite{Rosberg06} and a 2D \cite{Wang07,Szameit07} photonic crystal as well as at the interface of two dissimilar crystals \cite{Suntsov08,Szameit08}.

\section{Mathematical Setup and Main Results}

Let $V_1, V_2, \Gamma_1, \Gamma_2: \R^n\to \R$ be bounded functions, $p>1$, and consider the differential operators
$$
L_i := -\Delta + V_i, \quad i=1,2
$$
on $\cD(L_i)=H^2(\R^n)\subset L^2(\R^n)$ and denote their spectra with $\sigma(L_i)$. Our basic assumptions are:
\begin{itemize}
\item[(H1)] $V_1, V_2, \Gamma_1, \Gamma_2$ are $T_k$-periodic in the $x_k$-direction for $k=1,\ldots,n$ with $T_1=1$,
\item[(H2)] $\esssup\Gamma_i>0$, $i=1,2$,
\item[(H3)] $1<p<2^\ast-1$,
\end{itemize}
where, as usual, $2^\ast=\frac{2n}{n-2}$ if $n\geq 3$ and $2^\ast=\infty$ if $n=1,2$. Let us also mention a stronger form of (H2), namely
\begin{itemize}
\item[(H2')] $\essinf \Gamma_i>0$ , $i=1,2$.
\end{itemize}
Conditions (H2), (H2') will be commented below. (H3) is commonly used in the variational description of ground states. In order to have an energy-functional $J$, which is well-defined on $H^1(\R^n)$, one needs $1\leq p\leq 2^\ast-1$. The assumption $p>1$ makes the problem superlinear and the assumption $p<2^\ast-1$ provides some compactness via the Sobolev embedding theorem. Although we restrict our attention to $1<p<2^\ast-1$, problem \eqref{E:NLS} for $0<p<1$ or for $p\geq 2^\ast-1$, $n\geq 3$ is also of interest. 

\medskip

Consider the two purely periodic (stationary) nonlinear Schr\"odinger equations
\begin{equation}
L_i u = \Gamma_i(x) |u|^{p-1}u \mbox{ in } \R^n \label{per_nls}.
\end{equation}
Their solutions arise as critical points of the functionals
$$
J_i[u] := \int_{\R^n} \left(\frac{1}{2}\left(|\nabla u|^2+ V_i(x) u^2\right)-\frac{1}{p+1} \Gamma_i(x) |u|^{p+1}\right)\,dx, \quad u \in H^1(\R^n).
$$
If (H1), (H2'), (H3) hold and if $0\not\in \sigma(L_i)$ then it is well known, cf. Pankov \cite{pankov}, that the purely periodic problem \eqref{per_nls} has a ground state $w_i$, i.e. a function $w_i\in H^1(\R^n)$ which is a weak solution of \eqref{per_nls} such that its energy $c_i := J_i[w_i]$ is minimal among all nontrivial $H^1$ solutions. However, under the additional assumption $0<\min \sigma(L_i)$ a ground state of \eqref{per_nls} exists under the weaker hypotheses (H1), (H2), (H3). This can be seen from an inspection of the proof in \cite{pankov}, which we leave to the reader. In the following, we use (H1), (H2), (H3). Of course our results are also valid under the stronger hypotheses (H1), (H2'), (H3), which do not require any extension of \cite{pankov}.

\medskip

In the present paper we are interested in ground states for a nonlinear Schr\"odinger equation modeling an interface between two different materials. For this purpose we define the composite functions 
$$
V(x) = \left\{
\begin{array}{ll}
V_1(x), & x\in \R^n_+, \vspace{\jot}\\
V_2(x), & x\in \R^n_-,
\end{array} \right.
\qquad
\Gamma(x) = \left\{
\begin{array}{ll}
\Gamma_1(x), & x\in \R^n_+, \vspace{\jot}\\
\Gamma_2(x), & x\in \R^n_-,
\end{array} \right.
$$
where $\R^n_\pm = \{x\in \R^n: \pm x_1>0 \}$ and the differential operator
$$
L := -\Delta + V \mbox{ on } \cD(L)=H^2(\R^n)\subset L^2(\R^n).
$$
We will prove existence of ground states for the nonlinear Schr\"odinger equation of interface-type
\begin{equation}
Lu = \Gamma(x) |u|^{p-1}u \mbox{ in } \R^n.
\label{int_nls}
\end{equation}
Solutions of \eqref{int_nls} are critical points of the energy functional
$$
J[u] := \int_{\R^n} \left(\frac{1}{2}\left(|\nabla u|^2+ V(x) u^2\right)-\frac{1}{p+1} \Gamma(x) |u|^{p+1}\right)\,dx, \quad u \in H^1(\R^n).
$$
Since $J$ is unbounded from above and below, minimization/maximization of $J$ on all of $H^1(\R^n)$ is impossible. Therefore we seek solutions of the following constrained minimization problem:
\begin{equation}
\mbox{find } w\in N \mbox{ such that } J[w] = c:= \inf_{u\in N} J[u], 
\label{int_gs}
\end{equation}
where $N$ is the Nehari manifold given by
$$ N=\{u\in H^1(\R^n): u\not =0, G[u]=0\}, \quad G[u] = \int_{\R^n} \left(|\nabla u|^2+ V(x)u^2- \Gamma(x)|u|^{p+1}\right) \,dx.
$$
Note that $N$ contains all non-trivial $H^1(\R^n)$-solutions of \eqref{int_nls} and (H2) ensures that $N\neq\emptyset$.\footnote{For a proof, construct a sequence $(u_k)_{k\in \N}$ in $H^1(\R^n)$ which converges in $L^{p+1}(\R^n)$ to the characteristic function of $\{x\in \R^n: \Gamma(x)>0, |x|\leq R\}$ for some large $R$. For sufficiently large $k\in \N$ one finds $\int_{\R^n} \Gamma(x) |u_k|^{p+1}\,dx>0$ and thus $\exists t>0$ such that $tu_k\in N$.} The stronger condition (H2') makes $N$ a topological sphere, i.e, $\forall u \in H^1(\R^n)\setminus\{0\}$ $\exists t>0$ such that $tu\in N$. Moreover, one of the advantages of $N$ is that the Lagrange multiplier introduced by the constraint turns out to be zero as checked by a direct calculation. In our case of a pure power nonlinearity one could alternatively use the constraint $\int_{\R^n}\Gamma(x)|u|^{p+1} \,dx = 1$. Here the Lagrange parameter can be scaled out a posteriori. A third possibility would be the constraint $\int_{\R^2}u^2\, dx=\mu>0$, which generates a Lagrange parameter $\lambda(\mu)$. The additional term $\lambda(\mu)u$ in \eqref{per_nls} cannot be scaled out. This approach is, moreover, restricted to $1<p<1+\tfrac{4}{n}$, cf. \cite{stuart}.

\begin{definition} 
The following terminology will be used throughout the paper.
\begin{enumerate}
\item[(a)] A \textit{bound state} is a weak solution of \eqref{per_nls} in $H^1(\R^n)$.
\item[(b)] A \textit{ground state} is a bound state such that its energy is minimal among all nontrivial bound states.
\item[(c)] A \textit{strong ground state} is a solution to \eqref{int_gs}.
\end{enumerate}
\end{definition}
Note that a strong ground state is a also a ground state because $N$ contains all non-trivial bound states.

\medskip


For the success in solving \eqref{int_gs} we need to assume additionally to (H1)--(H3) that $0 < \min \sigma(L)$, which is, e.g., satisfied if there exists a constant $v_0>0$ such that $V_1, V_2\geq v_0$. Note that $\sigma(L)\supset \sigma(L_1)\cup\sigma(L_2)$, and hence the assumption $0<\min\sigma(L)$ implies in particular $0<\min\sigma(L_i)$ for $i=1,2$. The additional spectrum of $L$ may be further essential spectrum or, as described in \cite{doplurei}, so-called gap-eigenvalues.

As we show in Lemma \ref{L:c_less_c12}, one always has $c\leq \min\{c_1,c_2\}.$ Our main result shows that if the strict inequality holds, then strong ground states exist.

\begin{theorem} Assume (H1)--(H3) and $0< \min \sigma(L)$. If $c<\min\{c_1,c_2\}$, where $c$ is defined in \eqref{int_gs} and $c_1, c_2$ are the ground state energies of the purely periodic problems \eqref{per_nls}, then $c$ is attained, i.e., there exists a strong ground state for the interface problem \eqref{int_nls}.
\label{main}
\end{theorem}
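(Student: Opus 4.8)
The plan is to solve the constrained minimization \eqref{int_gs} directly by the concentration-compactness method, using the strict inequality $c<\min\{c_1,c_2\}$ precisely to exclude any loss of mass to infinity in the $x_1$-direction. First I would fix a minimizing sequence $(u_k)\subset N$ with $J[u_k]\to c$. On $N$ one has $G[u_k]=0$, and since $J[u]-\tfrac1{p+1}G[u]=(\tfrac12-\tfrac1{p+1})\int_{\R^n}(|\nabla u|^2+Vu^2)$ with $\tfrac12-\tfrac1{p+1}>0$, the assumption $0<\min\sigma(L)$ makes the quadratic form $\int_{\R^n}(|\nabla u|^2+Vu^2)$ equivalent to $\|u\|_{H^1}^2$; hence $(u_k)$ is bounded in $H^1(\R^n)$. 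The Nehari constraint together with the Sobolev embedding $H^1\hookrightarrow L^{p+1}$ (where (H3) enters) furthermore yields a uniform lower bound $\|u_k\|_{H^1}\ge\delta>0$, so that $\int_{\R^n}\Gamma|u_k|^{p+1}$ stays bounded away from $0$.

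Next I would exclude vanishing and normalize the transverse position. By Lions' vanishing lemma, if $\sup_{y\in\R^n}\int_{B_1(y)}|u_k|^2\,dx\to0$ then $u_k\to0$ in $L^{p+1}(\R^n)$, which via $G[u_k]=0$ and coercivity would force $\|u_k\|_{H^1}\to0$, contradicting the lower bound. Hence there are centers $y_k=(y_k^{(1)},y_k')$ and $\delta>0$ with $\int_{B_1(y_k)}|u_k|^2\,dx\ge\delta$. Because $V,\Gamma$, and hence $J$ and $G$, are invariant under translations by the period lattice in the directions $x_2,\dots,x_n$ (hypothesis (H1)), I may translate each $u_k$ by such a lattice vector so as to keep the transverse components $y_k'$ bounded, without altering $J[u_k]$ or membership in $N$.

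The decisive step is to control the longitudinal component $y_k^{(1)}$. If, along a subsequence, it stays bounded, then the recentered sequence has a weak $H^1$-limit $u$ which is nonzero by Rellich compactness on balls together with the concentration bound; passing to the limit in the weak form shows $u$ solves \eqref{int_nls}, so $u\in N$ and $J[u]\ge c$. If instead $y_k^{(1)}\to+\infty$ (resp. $-\infty$), I would translate $u_k$ by an integer multiple $m_ke_1$ of the $x_1$-period $T_1=1$ chosen so that $y_k^{(1)}-m_k$ is bounded. Since $V_1,\Gamma_1$ (resp. $V_2,\Gamma_2$) are themselves $1$-periodic in $x_1$, this translation leaves the coefficients deep inside $\R^n_+$ (resp. $\R^n_-$) unchanged while pushing the interface $\{x_1=0\}$ off to $-\infty$ (resp. $+\infty$); in the limit the translated sequence produces a nontrivial bound state of the purely periodic problem \eqref{per_nls} with $i=1$ (resp. $i=2$), whose energy is therefore $\ge c_1$ (resp. $\ge c_2$). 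A Brezis--Lieb decomposition $J[u_k]=J[u]+J[u_k-u]+o(1)$, $G[u_k]=G[u_k]+G[u_k-u]+o(1)$, applied iteratively to peel off the escaping profiles, shows that $c$ equals the sum of the positive energies of the concentrated limit and of the finitely many escaping bumps; since any escaping bump contributes at least $\min\{c_1,c_2\}$, the presence of even one of them would give $c\ge\min\{c_1,c_2\}$, contradicting the hypothesis. Therefore no mass escapes, $u_k\to u$ strongly in $H^1$, and $u\in N$ with $J[u]=c$ is the desired strong ground state; the Euler--Lagrange equation holds with vanishing Lagrange multiplier, as already noted in the text.

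I expect the analysis of the escaping mass to be the main obstacle: one must show rigorously that the piece of $(u_k)$ travelling to $x_1=\pm\infty$, after integer translation, genuinely converges to a \emph{nontrivial} bound state of the correct periodic problem and thus carries at least the ground-state energy $c_1$ or $c_2$, while verifying the additive splitting of both the quadratic and the nonlinear parts of $J$ and $G$ in the limit. Organizing the full profile decomposition so that all transverse and longitudinal translations are tracked consistently, and confirming that the iteration terminates after finitely many steps, is the technically delicate point; the rest is routine once the strict inequality is invoked to forbid any leakage of mass to infinity.
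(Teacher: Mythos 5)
Your outline follows the same broad concentration--compactness philosophy as the paper, but it contains a genuine gap: you work throughout with a plain minimizing sequence $(u_k)\subset N$, $J[u_k]\to c$, while every limit statement you make requires the approximate Euler--Lagrange equation $J'[u_k]\to 0$ in $(H^1(\R^n))^\ast$. When the concentration centers stay bounded you assert that ``passing to the limit in the weak form shows $u$ solves \eqref{int_nls}''; when they escape you assert that the translated bumps converge to \emph{nontrivial bound states} of the periodic problems \eqref{per_nls} and hence carry energy at least $c_1$ or $c_2$. Neither assertion holds for a mere minimizing sequence on the Nehari manifold: one must first show that minimization on $N$ produces a Palais--Smale sequence for the free functional $J$. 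This is precisely what the paper's Lemma~\ref{minimizing_sequence} does via Ekeland's variational principle, together with the technical Lemma~\ref{approx_lagrange} which shows that the Lagrange-multiplier component of $\nabla J[u_k]$ vanishes in the limit (using $G'[u_k]u_k=(1-p)\int_{\R^n}(|\nabla u_k|^2+Vu_k^2)\,dx\neq 0$). Without this step your energy accounting has nothing to compare against $\min\{c_1,c_2\}$, and the contradiction you aim for cannot be reached. (A repair that avoids equations altogether is possible: rescale the weak limit and each escaping piece back onto $N$, $N_1$ or $N_2$ and compare the energies of the rescaled functions with $c$, $c_1$, $c_2$; but that is a different bookkeeping from what you wrote, requiring control of the signs of $G$, $G_1$, $G_2$ on the pieces.)

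Apart from this, your route is genuinely different from the paper's in its decomposition. You propose Brezis--Lieb profile peeling (terminating because every nontrivial critical point of $J$, $J_1$, $J_2$ has $H^1$-norm bounded below), whereas the paper splits $u_k$ by smooth cutoffs adapted to the strip $S_\delta=(-\delta,\delta)\times\R^{n-1}$ (Lemma~\ref{properties}) and proves the dichotomy of Proposition~\ref{prop_result}. The paper's organization needs two tools absent from your plan: the unique continuation theorem, to exclude a nontrivial solution of \eqref{int_nls} vanishing identically on $S_\delta$, and the $L^q\to W^{2,q}$ resolvent bound of Lemma A1, used in Lemma~\ref{neues_lemma} to upgrade $L^s$-smallness on the strip to $H^1$-smallness. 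If you add the Ekeland step, your profile-decomposition argument would bypass both of these, at the price of the bookkeeping you already flagged; note also that the splitting on escaping pieces must be stated with $J_1$ or $J_2$ (using (H1)), not with $J$ itself as in your displayed identity, which moreover should read $G[u_k]=G[u]+G[u_k-u]+o(1)$.
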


\begin{remark} We state the following two basic properties of every strong ground state $u_0$ of \eqref{int_nls}. 
\begin{itemize}
\item[(i)] $u_0$ is exponentially decaying. The proof given in \cite{pankov} can be applied.
\item[(ii)] Up to multiplication with $-1$, $u_0$ is strictly positive. For the reader's convenience a proof is sketched in Lemma~A2 of the Appendix.
\end{itemize}
\label{positivity}
\end{remark}

Let us also note a result which excludes the existence of strong ground states. 

\begin{theorem}
Assume (H1)--(H3) and $0< \min \sigma(L)$. If $V_1\leq V_2$ and $\Gamma_1\geq \Gamma_2$ and if one of the inequalities is strict on a set of positive measure, then there exists no strong ground state of \eqref{int_nls}.
\label{non_ex}
\end{theorem}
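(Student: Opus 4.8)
The plan is to show that, under these sign conditions, the purely periodic problem on the favorable side $\R^n_+$ is never improved by the interface, so that $c=c_1=\min\{c_1,c_2\}$, and then to show that any minimizer would have to vanish on a set of positive measure, contradicting its strict positivity.

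I would begin with the fibering over the Nehari manifold. Put $A(u)=\int_{\R^n}(|\nabla u|^2+Vu^2)\,dx$, $B(u)=\int_{\R^n}\Gamma|u|^{p+1}\,dx$, and let $A_1,B_1$ be the same with $(V,\Gamma)$ replaced by $(V_1,\Gamma_1)$. For $u\neq0$ one has $A(u)\ge\min\sigma(L)\,\|u\|_{L^2}^2>0$, and similarly $A_1(u)>0$; hence whenever $B(u)>0$ the fiber $t\mapsto J[tu]=\tfrac{t^2}{2}A(u)-\tfrac{t^{p+1}}{p+1}B(u)$ attains its maximum over $t>0$ at a unique $t_\ast>0$ with $t_\ast u\in N$, and in particular $\max_{t>0}J[tw]=J[w]$ for $w\in N$. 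The same holds for $J_1$ and $N_1$. Since $V=V_2\ge V_1$ and $\Gamma=\Gamma_2\le\Gamma_1$ on $\R^n_-$ while $V=V_1,\Gamma=\Gamma_1$ on $\R^n_+$, the fiber maps compare:
\[
J[tu]-J_1[tu]=\frac{t^2}{2}\int_{\R^n_-}(V_2-V_1)u^2\,dx-\frac{t^{p+1}}{p+1}\int_{\R^n_-}(\Gamma_2-\Gamma_1)|u|^{p+1}\,dx\ \ge\ 0
\]
for every $u\in H^1(\R^n)$ and $t>0$, each of the two terms on the right being separately nonnegative.

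Next I would chain these facts. Fix $w\in N$; since $B_1(w)\ge B(w)=A(w)>0$ there is $t_1>0$ with $t_1w\in N_1$, and
\[
J[w]=\max_{t>0}J[tw]\ \ge\ J[t_1w]\ \ge\ J_1[t_1w]\ \ge\ c_1,
\]
using, in order, the Nehari property, that $t=1$ maximizes, the fiber comparison at $t=t_1$, and $t_1w\in N_1$. Taking the infimum over $w\in N$ gives $c\ge c_1$, and with Lemma~\ref{L:c_less_c12} we conclude $c=c_1=\min\{c_1,c_2\}$. Now suppose a strong ground state $w$ exists, so $J[w]=c=c_1$. Then the displayed chain collapses to equalities, in particular $J[t_1w]=J_1[t_1w]$, which forces both nonnegative terms in the fiber comparison (with $u=w$, $t=t_1$) to vanish:
\[
\int_{\R^n_-}(V_2-V_1)w^2\,dx=0,\qquad \int_{\R^n_-}(\Gamma_1-\Gamma_2)|w|^{p+1}\,dx=0.
\]
If $V_1<V_2$ on a set of positive measure, then by the $T_k$-periodicity of $V_1,V_2$ this set meets $\R^n_-$ in a set of positive measure on which $V_2-V_1>0$, so the first identity forces $w=0$ there; if instead $\Gamma_1>\Gamma_2$ on a set of positive measure, the second identity forces $w=0$ on a positive-measure subset of $\R^n_-$ in the same way. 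In either case $w$ vanishes on a set of positive measure, contradicting the strict positivity (up to sign) of strong ground states established in Remark~\ref{positivity}(ii). Hence $c$ is not attained and no strong ground state exists.

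The step I expect to be most delicate is the rigidity: one must be sure that the single scalar equality $J[t_1w]=J_1[t_1w]$ splits into the two separate vanishing conditions, which works precisely because the quadratic and the $(p+1)$-homogeneous contributions enter with opposite signs and are each of one sign; and one must transfer \emph{positive measure in $\R^n$} to \emph{positive measure in $\R^n_-$} via periodicity, so that the a priori strict positivity of $w$ can actually be contradicted.
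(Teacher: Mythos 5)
Your proof is correct, and it shares the paper's basic strategy---rescaling an arbitrary $w\in N$ onto the Nehari manifold $N_1$ and exploiting $V_1\le V$, $\Gamma_1\ge\Gamma$---but both halves are implemented differently. For $c\ge c_1$ the paper computes the scaling factor $\tau$ with $\tau w\in N_1$ explicitly, shows $\tau\le 1$ (using $w\in N$ to rewrite the numerator), and then bounds $c_1\le J_1[\tau w]=\eta\tau^2\int_{\R^n}\bigl(|\nabla w|^2+V_1w^2\bigr)\,dx\le \eta\int_{\R^n}\bigl(|\nabla w|^2+Vw^2\bigr)\,dx=J[w]$; you instead invoke the fiber-maximum property $J[w]=\max_{t>0}J[tw]$ together with the pointwise comparison $J[tu]\ge J_1[tu]$, which makes the size of $t_1$ irrelevant. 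More significantly, the contradiction is extracted by a different mechanism: the paper uses the strict positivity of a putative minimizer $\bar u_0$ (Remark~\ref{positivity}, Lemma~A2) and the strict coefficient inequality to force $\tau<1$, whence $c_1\le J_1[\tau\bar u_0]<J[\bar u_0]=c$, contradicting Lemma~\ref{L:c_less_c12}; you instead let the equality chain collapse, split the single scalar equality $J[t_1w]=J_1[t_1w]$ into the vanishing of two one-signed integrals, and conclude that $w$ vanishes on a set of positive measure, contradicting positivity directly. Both rigidity arguments need the same inputs---Lemma~A2 and the transfer, via periodicity of $V_2-V_1$ and $\Gamma_1-\Gamma_2$, of ``strict on a positive-measure subset of $\R^n$'' to ``strict on a positive-measure subset of $\R^n_-$'', a point the paper leaves implicit and you rightly make explicit. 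The paper's version is a bit more economical (no fibering lemma needed); yours is somewhat more robust, since it never uses $t_1\le 1$, and it identifies exactly which quantities must vanish when equality holds.
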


\begin{remark}\label{rem:curved}
The non-existence result in Theorem \ref{non_ex} can be extended to more general interfaces $\Sigma$ (not necessarily manifolds) as follows. Let $\Sigma$ separate $\R^n$ into two disjoint sets $\Omega_1$ and $\Omega_2$ with $\Omega_1$ unbounded such that $\R^n = \overline{\Omega}_1 \cup \overline{\Omega}_2$ and  $\Sigma = \partial \Omega_1 = \partial \Omega_2$ and suppose
\[V(x) = \left\{
\begin{array}{ll}
V_1(x), & x\in \Omega_1, \vspace{\jot}\\
V_2(x), & x\in \Omega_2,
\end{array} \right.
\qquad
\Gamma(x) = \left\{
\begin{array}{ll}
\Gamma_1(x), & x\in \Omega_1, \vspace{\jot}\\
\Gamma_2(x), & x\in \Omega_2.
\end{array} \right.\]
Then the previous non-existence result holds if there exists a sequence $(q_j)_{j\in \N}$ in $\Omega_1\cap {\mathcal Z}^{(n)}$ such that $\dist(q_j, \Omega_2) \rightarrow \infty$ as $j\rightarrow \infty$, where ${\mathcal Z}^{(n)}=T_1\Z\times T_2\Z\times\ldots\times T_n\Z$. 

This is, for instance, satisfied if there exists a cone ${\mathcal C}_1$ in $\R^n$ such that outside a sufficiently large ball $B_R$ the cone lies completely within the region $\Omega_1$, i.e., there is a sufficiently large radius $R>0$ such that
\[{\mathcal C}_1 \cap B_R^c \subset \Omega_1.\]
In the case $n=2$, where the cone becomes a sector, an example of such an interface is plotted in Figure~\ref{F:curved_interf}.
\begin{figure}[!ht]
  \begin{center}
  \scalebox{0.28}{\includegraphics{./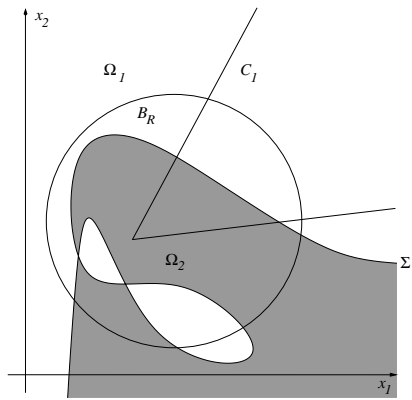}}
  \end{center}
  \caption{An example of a curved interface in 2D}
  \label{F:curved_interf}
\end{figure} 
\end{remark}

\medskip

\noindent
Note that (to the best of our knowledge) the fact that no strong ground state exists does not preclude the existence of a ground state. Moreover, under the assumptions of Theorem~\ref{non_ex} there may still exist bound states of \eqref{int_nls}, i.e. critical points of $J$ in $H^1(\R^n)$, cf. \cite{do_pel,Blank_Dohnal}.

\medskip

To verify the existence condition $c<\min\{c_1,c_2\}$ of Theorem~\ref{main},
it suffices to determine a function $u\in N$ such that $J[u]<\min\{c_1,c_2\}$. The following theorem shows that a suitable candidate for such a function is a shifted and rescaled ground state corresponding to the purely periodic problem with the smaller of the two energies. Using an idea of Arcoya, Cingolani and G\'{a}mez \cite{acg}, we shift the ground state far into the half-space representing the smaller ground state energy. The rescaling is needed to force the candidate to lie in $N$.

\medskip

The following theorem thus offers a criterion for verifying existence of strong ground states. 

\begin{theorem}
Assume (H1)--(H3) and $0<\min \sigma(L)$. Let $w_i$ be a ground state of \eqref{per_nls} for $i=1,2$ and let $e_1$ be the unit vector $(1, 0, \ldots, 0) \in \R^n$. 
\begin{itemize} 
\item[(a)] If $c_1\leq c_2$ and 
\beq\label{E:V_cond}
(p+1)\int_{\R^n_-} (V_2(x)-V_1(x)) w_1(x-te_1)^2\,dx \\
< 2 \int_{\R^n_-} (\Gamma_2(x)-\Gamma_1(x)) |w_1(x-te_1)|^{p+1}\,dx
\eeq
for all $t\in \N$ large enough, then $c<\min\{c_1,c_2\}$ and therefore \eqref{int_nls} has a strong ground state. Condition \eqref{E:V_cond} holds, e.g., if $\esssup (V_2-V_1)<0$.
\item[(b)] If $c_2\leq c_1$, then in the above criterion one needs to replace $w_1(x-te_1)$ by $w_2(x+te_1)$, integrate over $\R^n_+$ and switch the roles of $V_1, V_2$ and $\Gamma_1, \Gamma_2$. 
\end{itemize}
\label{gen_princ}
\end{theorem}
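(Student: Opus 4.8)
The plan is to verify the sufficient condition $c<\min\{c_1,c_2\}$ of \thmref{main} by exhibiting a single admissible function on the Nehari manifold $N$ whose energy lies strictly below $c_1=\min\{c_1,c_2\}$; I treat case (a), case (b) being the reflection $x_1\mapsto-x_1$. As the text suggests, the natural candidate is the integer translate $u_t(x):=w_1(x-te_1)$, $t\in\N$, which pushes the periodic ground state $w_1$ deep into the half-space $\R^n_+$ where the composite coefficients equal $V_1,\Gamma_1$.

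First I would evaluate the energy of the translate \emph{before} projecting it onto $N$. Since $V_1,\Gamma_1$ are $1$-periodic in $x_1$ and $t\in\N$, the substitution $x\mapsto x+te_1$ gives $J_1[u_t]=J_1[w_1]=c_1$. Splitting $J[u_t]$ over the two half-spaces and subtracting $J_1[u_t]$, every contribution from $\R^n_+$ cancels and one is left with
\[
J[u_t]-c_1=\tfrac12\alpha_t-\tfrac{1}{p+1}\beta_t,\qquad
\alpha_t:=\int_{\R^n_-}(V_2-V_1)\,u_t^2,\quad
\beta_t:=\int_{\R^n_-}(\Gamma_2-\Gamma_1)\,|u_t|^{p+1}.
\]
Thus \eqref{E:V_cond} is \emph{precisely} the statement $J[u_t]<c_1$ for all large $t$.

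The remaining work is to transfer this strict inequality from $u_t$ to its projection onto $N$. Set $A_t:=\int(|\nabla u_t|^2+Vu_t^2)$ and $B_t:=\int\Gamma|u_t|^{p+1}$. The hypothesis $0<\min\sigma(L)$ makes the quadratic form positive definite, so $A_t>0$; and because $u_t$ concentrates in $\R^n_+$ while $w_1$ is exponentially localized, $B_t\to\int\Gamma_1|w_1|^{p+1}=\int(|\nabla w_1|^2+V_1w_1^2)=:A_1>0$, so $B_t>0$ for $t$ large. Hence there is a unique $s_t=(A_t/B_t)^{1/(p-1)}>0$ with $s_tu_t\in N$, and a computation along the ray $s\mapsto J[su_t]$ gives $J[s_tu_t]=\max_{s>0}J[su_t]=\tfrac{p-1}{2(p+1)}A_t^{(p+1)/(p-1)}B_t^{-2/(p-1)}$. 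Since $c=\inf_N J\le J[s_tu_t]$, it suffices to prove $J[s_tu_t]<c_1$. Testing \eqref{per_nls} with $w_1$ yields $A_1=B_1$ and $c_1=\tfrac{p-1}{2(p+1)}A_1$, so this reduces to the clean inequality $A_t^{\,p+1}<A_1^{\,p-1}B_t^{\,2}$.

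The main obstacle is exactly this last inequality: projecting onto $N$ \emph{raises} the energy (the point $s_t$ realizes the maximum along the ray), so $J[u_t]<c_1$ does not by itself yield $J[s_tu_t]<c_1$. Writing $A_t=A_1+\alpha_t$, $B_t=A_1+\beta_t$ and expanding, the first-order part of $A_t^{p+1}-A_1^{p-1}B_t^2$ equals $A_1^{p}\big((p+1)\alpha_t-2\beta_t\big)$, which is strictly negative by \eqref{E:V_cond}, while the remainder is quadratic in $(\alpha_t,\beta_t)$. To close the gap I would invoke the exponential decay of $w_1$ (Remark~\ref{positivity}(i)): it forces $\alpha_t,\beta_t\to0$ exponentially, with $\beta_t$ (controlled by $|w_1|^{p+1}$) decaying faster than $\alpha_t$ (controlled by $w_1^2$) since $p+1>2$; hence the negative linear term dominates the quadratic correction once $t$ is large, giving $J[s_tu_t]<c_1$. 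This is transparent in the stated example $\esssup(V_2-V_1)<0$, where $\alpha_t<0$ is the dominant term; and when additionally $\Gamma_1\equiv\Gamma_2$ one has $\beta_t=0$, so the inequality reduces to the exact statement $A_t<A_1$ with no correction to control.
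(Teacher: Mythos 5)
Your proposal is correct and follows essentially the same route as the paper: shift the ground state via $u_t=w_1(\cdot-te_1)$ (the Arcoya--Cingolani--G\'amez idea), project onto $N$ by the explicit scaling $s_t=(A_t/B_t)^{1/(p-1)}$, and expand $J[s_tu_t]=\eta A_t^{(p+1)/(p-1)}B_t^{-2/(p-1)}$ to first order in the interface corrections $\alpha_t,\beta_t$ so that \eqref{E:V_cond} yields $J[s_tu_t]<c_1$, hence $c<\min\{c_1,c_2\}$ and Theorem~\ref{main} applies. Your extra observations (that \eqref{E:V_cond} is exactly $J[u_t]<c_1$ before projection, and that the quadratic remainder must be dominated --- which you control via the exponential decay of $w_1$ and $p+1>2$) are sound and, if anything, make explicit a second-order point that the paper's own proof passes over with the expansion $(1+\eps)^\alpha=1+\alpha\eps(1+o(1))$.
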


Using the criterion in Theorem \ref{gen_princ}, we have found several classes of interfaces leading to the existence of strong ground states. These are listed in the following theorems.

\medskip

The first example considers potentials related by a particular scaling.
\begin{theorem}
Assume (H1)--(H3), $0<\min \sigma(L)$, and $V_1(x) = k^2 V_2(k x)$, $\Gamma_1(x) = \gamma^2 \Gamma_2(k x)$ for some $k\in \N$ as well as 
\beq\label{E:V_scale_ord}
\sup_{[0,1]^n} V_2<k^2\inf_{[0,1]^n} V_2 \quad \mbox{ and } \quad k^\frac{n+2-p(n-2)}{p-1}\leq \gamma^\frac{4}{p-1}.
\eeq
Then \eqref{int_nls} has a strong ground state.
\label{T:scaled_interf}
\end{theorem}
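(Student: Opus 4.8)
The plan is to reduce the statement entirely to Theorem~\ref{gen_princ}(a). The crucial structural observation is that the scaling $V_1(x)=k^2V_2(kx)$, $\Gamma_1(x)=\gamma^2\Gamma_2(kx)$ sets up an explicit, energy-scaling bijection between the bound states of the two purely periodic problems \eqref{per_nls}. Once this is in place, the two inequalities in \eqref{E:V_scale_ord} translate respectively into the ordering $c_1\le c_2$ and into the sufficient condition $\esssup(V_2-V_1)<0$ from Theorem~\ref{gen_princ}, after which the conclusion is immediate.

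First I would exhibit the correspondence. Put $\alpha:=(k/\gamma)^{2/(p-1)}$ and $\Phi(u):=\alpha\,u(k\,\cdot)$. A direct substitution into \eqref{per_nls} shows that $u$ solves the periodic problem with coefficients $V_2,\Gamma_2$ if and only if $\Phi(u)$ solves the one with coefficients $V_1,\Gamma_1$; the identity $k^2=\gamma^2\alpha^{p-1}$ is precisely what makes the nonlinear terms match after applying $-\Delta$ to $u(k\cdot)$. Thus $\Phi$ is a linear isomorphism of $H^1(\R^n)$ carrying nontrivial bound states of the second problem bijectively onto those of the first. A change of variables $y=kx$ together with $\gamma^2\alpha^{p+1}=k^2\alpha^2$ yields the energy identity $J_1[\Phi(u)]=k^{2-n}\alpha^2\,J_2[u]$. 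Since the factor $k^{2-n}\alpha^2$ is positive, $\Phi$ carries energy-minimizing bound states to energy-minimizing bound states, so $w_1:=\Phi(w_2)$ is a ground state of the first problem and
\[ c_1 = k^{2-n}\alpha^2\,c_2 = k^{\frac{n+2-p(n-2)}{p-1}}\,\gamma^{-\frac{4}{p-1}}\,c_2. \]

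Next I would record that $c_2>0$: testing \eqref{per_nls} with $w_2$ gives $\int_{\R^n}(|\nabla w_2|^2+V_2w_2^2)\,dx=\int_{\R^n}\Gamma_2|w_2|^{p+1}\,dx$, whence $c_2=J_2[w_2]=\tfrac{p-1}{2(p+1)}\int_{\R^n}(|\nabla w_2|^2+V_2w_2^2)\,dx$, which is strictly positive because $0<\min\sigma(L)\le\min\sigma(L_2)$. Consequently the second inequality in \eqref{E:V_scale_ord}, $k^{(n+2-p(n-2))/(p-1)}\le\gamma^{4/(p-1)}$, is equivalent to $c_1\le c_2$, placing us in case (a) of Theorem~\ref{gen_princ}. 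To verify its sufficient condition I would use periodicity of $V_2$: for a.e.\ $x$ one has $V_2(x)\le\sup_{[0,1]^n}V_2$ and $k^2V_2(kx)\ge k^2\inf_{[0,1]^n}V_2$, so that $V_2(x)-V_1(x)=V_2(x)-k^2V_2(kx)\le\sup_{[0,1]^n}V_2-k^2\inf_{[0,1]^n}V_2<0$ by the first inequality in \eqref{E:V_scale_ord}. Hence $\esssup(V_2-V_1)<0$, so \eqref{E:V_cond} holds for all large $t\in\N$ and Theorem~\ref{gen_princ}(a) gives $c<\min\{c_1,c_2\}$, i.e.\ a strong ground state exists.

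The only genuinely delicate point is the bookkeeping in the scaling step: one must track the powers of $k$ and $\gamma$ carefully so that $k^{2-n}\alpha^2$ collapses to exactly $k^{(n+2-p(n-2))/(p-1)}\gamma^{-4/(p-1)}$ and thereby matches the exponent appearing in \eqref{E:V_scale_ord}; everything downstream is a routine invocation of the already-established Theorem~\ref{gen_princ}. I would also flag a mild consistency caveat worth a sentence: for $V_1(x)=k^2V_2(kx)$ to respect (H1) the integer dilation $k$ must be compatible with the periods, and in identifying $\sup_{[0,1]^n}V_2$, $\inf_{[0,1]^n}V_2$ with the global essential supremum and infimum one uses that $[0,1]^n$ contains a full period cell.
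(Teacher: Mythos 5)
Your proposal is correct and follows essentially the same route as the paper's own proof: you use the scaling map $w_1 = (k/\gamma)^{2/(p-1)}w_2(k\,\cdot)$ to obtain the energy relation $c_1 = (k/\gamma)^{4/(p-1)}k^{2-n}c_2$, deduce $c_1\leq c_2$ from the second inequality in \eqref{E:V_scale_ord}, deduce $\esssup(V_2-V_1)<0$ from the first, and invoke Theorem~\ref{gen_princ}(a). Your additional checks (the bijection property of $\Phi$, the positivity $c_2>0$, and the period-cell caveat for identifying $\sup_{[0,1]^n}$ with the global supremum) are details the paper leaves implicit, but they do not change the argument.
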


The next theorem guarantees existence for interfaces with a large jump in $\Gamma$.


\begin{theorem}\label{T:large_jump}
Assume (H1)--(H3), $0<\min\sigma(L)$.
\begin{itemize}
\item[(a)] Let $\esssup (V_2-V_1)<0$. Then there exists a value $\beta_0>0$ depending on $c_2$ such that if $\Gamma_1(x)  \geq \beta_0$ almost everywhere, then \eqref{int_nls} has a strong ground state.
\item[(b)] A similar result holds for $\esssup (V_2-V_1)>0$ and $\Gamma_2(x)  \geq \beta_0$ with $\beta_0=\beta_0(c_1)$.
\end{itemize}
\end{theorem}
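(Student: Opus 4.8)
The goal is to prove Theorem~\ref{T:large_jump}, which I will establish by invoking Theorem~\ref{gen_princ} as a black box. The strategy is to verify that the hypotheses of Theorem~\ref{gen_princ}(a) hold once $\Gamma_1$ is uniformly large. I treat case (a) in detail; case (b) is symmetric by exchanging the roles of the two half-spaces.

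\textbf{Reduction to the criterion.} Assume $\esssup(V_2-V_1)<0$. The last sentence of Theorem~\ref{gen_princ}(a) already records that the sign condition $\esssup(V_2-V_1)<0$ alone forces the key inequality \eqref{E:V_cond}, \emph{provided} we are in case (a), i.e. provided $c_1\leq c_2$. Indeed, under $\esssup(V_2-V_1)<0$ the left-hand side of \eqref{E:V_cond} is negative (since $w_1(\cdot-te_1)^2\geq 0$ and is not identically zero on $\R^n_-$), while the right-hand side is nonnegative as soon as $\Gamma_2\geq\Gamma_1$; but more robustly, the left side is strictly negative and bounded away from zero relative to the mass of $w_1$ near the interface. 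So the entire burden of the proof is to arrange that $c_1\leq c_2$, because once that holds Theorem~\ref{gen_princ}(a) applies verbatim and yields a strong ground state.

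\textbf{Forcing $c_1\leq c_2$ by enlarging $\Gamma_1$.} The plan is to show that the ground state energy $c_1$ of the first periodic problem can be made arbitrarily small by increasing $\Gamma_1$, while $c_2$ stays fixed. First I would derive a scaling/monotonicity estimate for $c_1$ in terms of $\Gamma_1$. For a fixed test function, rescaling $u\mapsto tu$ and optimizing over $t>0$ along the Nehari constraint gives the standard formula
\beq\label{E:nehari_energy}
c_1=\inf_{u\neq 0,\ \int\Gamma_1|u|^{p+1}>0}\left(\tfrac12-\tfrac1{p+1}\right)\frac{\left(\int_{\R^n}|\nabla u|^2+V_1u^2\,dx\right)^{\frac{p+1}{p-1}}}{\left(\int_{\R^n}\Gamma_1|u|^{p+1}\,dx\right)^{\frac{2}{p-1}}}.
\eeq
If $\Gamma_1\geq\beta_0$ almost everywhere, then $\int_{\R^n}\Gamma_1|u|^{p+1}\,dx\geq\beta_0\int_{\R^n}|u|^{p+1}\,dx$, and since $\tfrac{2}{p-1}>0$ the right-hand side of \eqref{E:nehari_energy} is bounded above by $\beta_0^{-2/(p-1)}$ times a quantity independent of $\beta_0$. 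Hence $c_1\to 0$ as $\beta_0\to\infty$. I would therefore choose the threshold $\beta_0=\beta_0(c_2)>0$ so large that this upper bound for $c_1$ is strictly less than $c_2$, using that $c_2>0$ (positivity of $c_2$ follows because $0<\min\sigma(L_2)$ makes the numerator in the analogue of \eqref{E:nehari_energy} positive and coercive). This gives $c_1<c_2$, in particular $c_1\leq c_2$.

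\textbf{Conclusion and the main obstacle.} With $c_1\leq c_2$ secured and $\esssup(V_2-V_1)<0$ in force, Theorem~\ref{gen_princ}(a) delivers $c<\min\{c_1,c_2\}$ and hence a strong ground state, proving part (a). Part (b) follows identically after interchanging indices $1\leftrightarrow 2$, replacing $\R^n_-$ by $\R^n_+$ and $\Gamma_1\geq\beta_0$ by $\Gamma_2\geq\beta_0$, and using Theorem~\ref{gen_princ}(b) with $\beta_0=\beta_0(c_1)$. The step I expect to require the most care is making the upper bound for $c_1$ in \eqref{E:nehari_energy} genuinely uniform: the numerator $\int|\nabla u|^2+V_1u^2\,dx$ must be controlled independently of $\beta_0$, which is immediate since $V_1$ is fixed, but I must also ensure the infimum is taken over a fixed admissible test function (or a fixed minimizing configuration) so that only the $\Gamma_1$-dependence through $\beta_0$ survives; the explicit dependence $\beta_0=\beta_0(c_2)$ then emerges transparently from requiring $\beta_0^{-2/(p-1)}\cdot(\text{const})<c_2$.
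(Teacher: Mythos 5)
Your proposal is correct and follows essentially the same route as the paper: reduce the theorem to verifying $c_1\leq c_2$ so that Theorem~\ref{gen_princ}(a) applies (whose statement already guarantees condition \eqref{E:V_cond} under $\esssup(V_2-V_1)<0$), and then force $c_1\leq c_2$ by exploiting the scaling bound $c_1\leq \beta_0^{-2/(p-1)}\cdot\const$ when $\Gamma_1\geq\beta_0$ almost everywhere. The paper packages this last bound as a monotonicity lemma (Lemma~\ref{mono}: $\Gamma_1\geq\Gamma_1^*$ implies $c_1(\Gamma_1)\leq c_1(\Gamma_1^*)$) combined with the exact identity $c_1(\beta_0)=\beta_0^{-2/(p-1)}c_1(1)$, whereas you read it off directly from the Nehari quotient with a fixed test function --- the same computation in a slightly different wrapper.
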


Finally, for the case $n=1$ we provide sufficient conditions for criterion \eqref{E:V_cond}. Instead of ground states $w_1, w_2$ themselves the new sufficient conditions use solely the linear Bloch modes of the operators $L_1, L_2$ on a single period. The coefficient $\Gamma$ in front of the nonlinear term has no influence in this criterion besides allowing the correct ordering between $c_1$ and $c_2$. Moreover, we show that if $0$ is sufficiently far from $\sigma(L)$, these conditions can be easily checked from the behavior of $V_1$ and $V_2$ near $x=0$.

\begin{theorem}
Let $n=1$ and consider the operator $L = -\frac{d^2}{dx^2} +V(x) -\lambda$ on ${\mathcal D}(L) = H^2(\R)\subset L^2(\R)$ with $\lambda \in \R$. Assume (H1)--(H3) and $0< \min \sigma(L)$. For $i=1,2$ define by $c_i$ the ground state energy of $(-\frac{d^2}{dx^2} +V_i(x) -\lambda)u=\Gamma_{i}(x)|u|^{p-1}u$ on $\R$ and assume $c_1\leq c_2$. 
\begin{itemize}
\item[(a)] A sufficient condition for the existence of a strong ground state for $Lu=\Gamma(x)|u|^{p-1}u$ in $\R$ is 
\begin{equation}
\int_{-1}^0 \Big(V_2(x)-V_1(x)\Big) p_-(x)^2 e^{2\kappa x}\,dx<0,
\label{gen_cond1}
\end{equation}
where $p_-(x)e^{\kappa x}$ is the Bloch mode decaying at $-\infty$ of $-\frac{d^2}{dx^2}+ V_1(x)-\lambda$.
\item[(b)] If for some $\eps>0$ the potentials $V_1, V_2$ are continuous in $[-\eps,0)$ and satisfy $V_2(x)<V_1(x)$ for all $x\in [-\eps,0)$, then condition \eqref{gen_cond1} holds for $\lambda$ sufficiently negative. In particular, if $V_1$, $V_2$ are $C^1$-functions near $x=0$, then 
\begin{equation}
V_2(0)<V_1(0) \qquad \mbox{ or } \qquad V_2(0)=V_1(0) \mbox{ and } V_2'(0)>V_1'(0)
\label{gen_cond2}
\end{equation}
implies \eqref{gen_cond1} for $\lambda$ sufficiently negative.
\label{T:Bloch_cond}
\end{itemize}
\end{theorem}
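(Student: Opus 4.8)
The plan is to deduce everything from the practical criterion of Theorem~\ref{gen_princ}: in dimension $n=1$ with $c_1\le c_2$ it suffices to verify \eqref{E:V_cond}, which here reads
\[(p+1)\int_{-\infty}^0 (V_2-V_1)\,w_1(x-t)^2\,dx < 2\int_{-\infty}^0 (\Gamma_2-\Gamma_1)\,|w_1(x-t)|^{p+1}\,dx\]
for all large $t\in\N$. The whole point is that as $t\to\infty$ the argument $x-t$ is driven to $-\infty$, where the periodic ground state $w_1$ is governed by the linear equation: since $0<\min\sigma(L)$ forces $0<\min\sigma(L_1)$, the energy $0$ lies in the semi-infinite gap of $L_1=-\tfrac{d^2}{dx^2}+V_1-\lambda$, so $L_1u=0$ has, up to scale, a unique solution decaying at $-\infty$, namely the nodeless Bloch mode $p_-(x)e^{\kappa x}$ with $p_-$ $1$-periodic and $\kappa>0$. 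My first and central step is therefore to establish the matching asymptotics
\[w_1(x)=A\,p_-(x)e^{\kappa x}\,(1+o(1))\qquad (x\to-\infty)\]
with a constant $A\neq 0$.

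For part (a), I substitute $y=x-t$ and use that $V_1,V_2,\Gamma_1,\Gamma_2$ and $p_-$ are $1$-periodic and $t$ is an integer, so $V_i(y+t)=V_i(y)$ and likewise for the other coefficients; the two sides of \eqref{E:V_cond} become integrals of (periodic)$\times e^{2\kappa y}$ and (periodic)$\times e^{(p+1)\kappa y}$ over $(-\infty,-t)$. Inserting the asymptotics and summing the resulting geometric series over unit periods turns each one-sided integral into a positive multiple of the corresponding single-period integral times an explicit exponential in $t$: the left side equals $(p+1)A^2 e^{-2\kappa t}\big(\tfrac{1}{1-e^{-2\kappa}}\int_{-1}^0(V_2-V_1)p_-^2 e^{2\kappa y}\,dy+o(1)\big)$, while the right side is $O(e^{-(p+1)\kappa t})$. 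Since $p>1$ gives $(p+1)\kappa>2\kappa$, the right-hand side is $o(e^{-2\kappa t})$ and the sign of the difference for large $t$ is dictated solely by $\int_{-1}^0(V_2-V_1)p_-^2 e^{2\kappa y}\,dy$. Thus \eqref{gen_cond1} makes the left side strictly smaller than the right side for all large $t$, and Theorem~\ref{gen_princ} yields the strong ground state. (The constants $A^2$ and $|A|^{p+1}$ are positive and do not affect the sign, and the role of $\Gamma$ is indeed confined to ensuring $c_1\le c_2$.)

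For part (b) I treat \eqref{gen_cond1} as a Laplace-type integral in the parameter $\lambda$. As $\lambda\to-\infty$ the potential $V_1-\lambda$ becomes uniformly large, the decay rate $\kappa=\kappa(\lambda)\to\infty$, and the weight $e^{2\kappa x}$ concentrates at the right endpoint $x=0$. Writing the integrand as $h(x)e^{2\kappa x}$ with $h=(V_2-V_1)p_-^2$ and splitting $\int_{-1}^0=\int_{-1}^{-\eps}+\int_{-\eps}^0$, the outer piece is $O(e^{-2\kappa\eps})$, while because $p_-^2>0$ and $V_2<V_1$ on $[-\eps,0)$ the inner piece is negative and, by bounding $h$ away from $0$ on a subinterval of $[-\eps,0)$ close to $0$, of strictly larger order; hence the whole integral is negative once $-\lambda$ is large enough. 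The $C^1$ statement \eqref{gen_cond2} reduces to this: $V_2(0)<V_1(0)$ gives $V_2<V_1$ on $[-\eps,0)$ by continuity, and $V_2(0)=V_1(0)$ with $V_2'(0)>V_1'(0)$ gives $(V_2-V_1)(x)=(V_2'-V_1')(0)\,x+o(x)<0$ for small $x<0$, again placing us in the continuity hypothesis.

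The main obstacle is the rigorous asymptotic $w_1(x)=A\,p_-(x)e^{\kappa x}(1+o(1))$, and in particular that $A\neq0$. This combines Floquet--Bloch theory (to produce $p_-,\kappa$ and the dichotomy between the decaying and growing solutions of $L_1u=0$ in the gap) with an asymptotic-integration argument showing that the superlinear term $\Gamma_1|w_1|^{p-1}w_1=O(e^{p\kappa x})$ is an exponentially higher-order perturbation that leaves the leading Bloch behavior intact; the nonvanishing of $A$ follows since $w_1$ is sign-definite (Remark~\ref{positivity}) and cannot decay faster than the slowest gap mode. A secondary technical point, needed in part (b), is uniform control of $\kappa(\lambda)$ and of the normalized periodic factor $p_-^{(\lambda)}$ as $\lambda\to-\infty$, so that $p_-^2$ stays bounded away from $0$ near $x=0$ and the Laplace estimate is uniform in the parameter.
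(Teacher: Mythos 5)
Your proposal is correct and follows essentially the same route as the paper: the reduction to criterion \eqref{E:V_cond} of Theorem~\ref{gen_princ}, the matching asymptotics $w_1(x)\sim A\,p_-(x)e^{\kappa x}$ as $x\to-\infty$ (the paper's Lemmas~\ref{comparison}--\ref{nonlinear}, where $A=d_-(\lambda;w_1)$), the periodicity/geometric-series reduction to a single-period integral combined with the exponent comparison $2\kappa<(p+1)\kappa$ (Lemma~\ref{linear_lemma}), and, for part (b), the uniform control $\kappa=\sqrt{|\lambda|}+O(1/\sqrt{|\lambda|})$ and $p_-\to 1$ together with the endpoint-concentration (Laplace-type) estimate (Lemmas~\ref{lambda_est}--\ref{L:integ_asympt}). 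The one delicate point you flag, $A\neq 0$, is treated no more rigorously in the paper itself (its Lemma~\ref{linear_lemma} implicitly assumes $d_-\neq 0$), so this is not a gap relative to the paper's own argument.
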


\noindent
\begin{remark} In the case $c_2\leq c_1$ the condition corresponding to \eqref{gen_cond1} is 
$$
\int_{0}^1 \Big(V_1(x)-V_2(x)\Big) p_+(x)^2 e^{-2\kappa x}\,dx<0,
$$
where $p_+(x)e^{-\kappa x}$ is the Bloch mode decaying at $+\infty$ of $-\frac{d^2}{dx^2}+ V_2(x)-\lambda$. It holds 
if $V_1, V_2$ satisfy the conditions of continuity and $V_1(x)<V_2(x)$ in $(0,\eps]$ for some $\eps>0$. The condition corresponding to \eqref{gen_cond2} is 
$$
V_1(0)<V_2(0) \qquad \mbox{ or } \qquad V_1(0)=V_2(0) \mbox{ and } V_2'(0)>V_1'(0).
$$
Note that the condition on the derivatives is the same as in \eqref{gen_cond2}.
\label{R:bloch_cond_reverse}
\end{remark}

In Section \ref{S:1D_example} we apply Theorem~\ref{T:Bloch_cond} to a so-called `dislocation' interface where $V_1(x) = V_0(x+\tau), V_2(x) = V_0(x-\tau)$ with $\tau\in \R$.

\medskip

The rest of the paper is structured as follows. In Section~\ref{sec:proofs}, using a concentration compactness argument, we prove the general criteria for existence/nonexistence of strong ground states in $\R^n$, i.e. Theorems~\ref{main}, \ref{non_ex}, and \ref{gen_princ}. Our two $n-$dimensional examples (Theorems~\ref{T:scaled_interf} and \ref{T:large_jump}), which satisfy these criteria, are proved in Section~\ref{S:nD_examples}. In Section~\ref{S:one-d} we prove Theorem~\ref{T:Bloch_cond}, i.e. a refinement of Theorem~\ref{gen_princ} for the case $n=1$. Section~\ref{S:1D_example} firstly presents a 1D example ($n=1$), namely a dislocation interface, satisfying the conditions of Theorem~\ref{T:Bloch_cond}, and secondly provides a heuristic explanation of the 1D existence results for $\lambda$ sufficiently negative. Finally, Section~\ref{S:discussion} discusses some open problems and the application of our results to several numerical and experimental works on surface gap solitons.

\section{$n$ Dimensions: General Existence Results} \label{sec:proofs}

\subsection{Proof of Theorem \ref{main}} \label{sec:ex}

According to Pankov \cite{pankov} ground states $w_i\in H^1(\R^n)$ for the purely periodic problem \eqref{per_nls} are strong ground states, so that they are characterized as 
$$
w_i\in N_i \quad \mbox{ such that } \quad J_i[w_i] = c_i := \inf_{u\in N_i} J_i[u]
$$
with $N_i$ being the Nehari-manifold
$$
N_i = \{u\in H^1(\R^n): u\not = 0, G_i[u]=0\}, \quad G_i[u] = \int_{\R^n} \left(|\nabla u|^2+ V_i(x)u^2- \Gamma_i(x) |u|^{p+1}\right) \,dx
$$
for $i=1,2$. The proof of Theorem \ref{main} consists in showing that a strategy similar to that of \cite{pankov} for purely periodic problems is successful in finding strong ground states of \eqref{int_nls} provided the basic inequality $c<\min\{c_1,c_2\}$ for the corresponding minimal energy levels holds.

\medskip

In the following we use the scalar product $\langle \phi,\psi\rangle = \int_{\R^n} \nabla \phi \cdot \nabla \psi+ \phi\psi\,dx$ for $\phi,\psi\in H^1(\R^n)$. For $u\in H^1(\R^n)$ the bounded linear functional $J'[u]$ can be represented by its gradient denoted by $\nabla J[u]$, i.e. 
$$
J'[u]\phi = \langle \nabla J[u],\phi\rangle \mbox{ for all } \phi\in H^1(\R^n).
$$

\begin{lemma} There exists a sequence $(u_k)_{k\in \N}$ on the Nehari-manifold $N$ such that $J[u_k]\to c$ and $J'[u_k]\to 0$ as $k\to \infty$. Moreover, $(u_k)_{k\in \N}$ is bounded and bounded away from zero in $H^1(\R^n)$, i.e., there exists $\epsilon, K>0$ such that $\epsilon\leq \|u_k\|_{H^1(\R^n)}\leq K$ for all $k\in \N$.
\label{minimizing_sequence}
\end{lemma}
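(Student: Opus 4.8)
The plan is to produce the sequence by applying Ekeland's variational principle to the restriction of $J$ to $N$, and then to upgrade the resulting tangential smallness of the derivative to the full conclusion $J'[u_k]\to0$ via a Lagrange-multiplier argument; the quantitative bounds will come directly from the spectral hypothesis.

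First I would extract two consequences of $0<\min\sigma(L)$. Abbreviating $a(u):=\int_{\R^n}\bigl(|\nabla u|^2+V(x)u^2\bigr)\,dx$, the boundedness of $V$ and the estimate $a(u)\ge\min\sigma(L)\,\|u\|_{L^2(\R^n)}^2$ make $a(\cdot)^{1/2}$ an equivalent norm on $H^1(\R^n)$: writing $a(u)=\theta a(u)+(1-\theta)a(u)$ and using $\int_{\R^n}V u^2\ge-\|V\|_\infty\|u\|_{L^2(\R^n)}^2$ in the first term, a small enough $\theta\in(0,1)$ yields $\delta\|u\|_{H^1(\R^n)}^2\le a(u)\le C\|u\|_{H^1(\R^n)}^2$ for some $\delta,C>0$. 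On $N$ the constraint $G[u]=0$ reads $a(u)=\int_{\R^n}\Gamma|u|^{p+1}\,dx$, whence
\[
J[u]=\Bigl(\tfrac12-\tfrac1{p+1}\Bigr)a(u)=\frac{p-1}{2(p+1)}\,a(u),\qquad u\in N,
\]
so minimizing $J$ over $N$ amounts to minimizing $a$. Two facts follow at once. By the Sobolev embedding ($p+1\le2^\ast$) and the norm equivalence, $a(u)=\int_{\R^n}\Gamma|u|^{p+1}\,dx\le C'a(u)^{(p+1)/2}$ forces, since $p>1$, a uniform bound $a(u)\ge\rho_0>0$ on all of $N$; together with $a(u)\le C\|u\|_{H^1(\R^n)}^2$ this gives the lower bound $\|u_k\|_{H^1(\R^n)}\ge\epsilon$ and shows $c>0$. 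Conversely, any minimizing sequence obeys $a(u_k)=\tfrac{2(p+1)}{p-1}J[u_k]\to\tfrac{2(p+1)}{p-1}c$, hence is bounded, giving $\|u_k\|_{H^1(\R^n)}\le K$.

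Next I would note that the bound $a(u)\ge\rho_0$ keeps $N$ bounded away from $0$, so $N$ is a complete metric space, and that $G$ is $C^1$ with $G'[u]u=(1-p)a(u)\le-(p-1)\rho_0<0$ on $N$, so $0$ is a regular value and $N$ is a $C^1$-manifold on which $J$ is bounded below. Ekeland's variational principle then yields a minimizing sequence $(u_k)\subset N$, $J[u_k]\to c$, with $|J'[u_k]\phi|\le\eps_k\|\phi\|_{H^1(\R^n)}$ for all $\phi$ in the tangent space $T_{u_k}N=\{\phi:G'[u_k]\phi=0\}$ and $\eps_k\to0$. The Lagrange-multiplier rule converts this into $\mu_k\in\R$ with $\nabla J[u_k]-\mu_k\nabla G[u_k]\to0$ in $H^1(\R^n)$. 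To control $\mu_k$ I would test this relation against $u_k$: since $J'[u_k]u_k=G[u_k]=0$ and $\|u_k\|_{H^1(\R^n)}$ is bounded, $\mu_k\,G'[u_k]u_k\to0$, and as $G'[u_k]u_k=(1-p)a(u_k)$ stays bounded away from zero we conclude $\mu_k\to0$. Finally, since $\nabla G[u_k]$ stays bounded along the bounded sequence, $\nabla J[u_k]=(\nabla J[u_k]-\mu_k\nabla G[u_k])+\mu_k\nabla G[u_k]\to0$ in $H^1(\R^n)$, i.e. $J'[u_k]\to0$.

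I expect the main obstacle to be exactly this last step --- passing from a constrained, tangentially critical sequence to a genuine Palais--Smale sequence for $J$ on all of $H^1(\R^n)$. Everything there hinges on the non-degeneracy $G'[u_k]u_k=(1-p)a(u_k)$ being bounded away from zero, which is itself secured by the uniform lower bound $a(u)\ge\rho_0$ on $N$. (The further, and genuinely hard, task of extracting an actual minimizer from this bounded Palais--Smale sequence is where the strict inequality $c<\min\{c_1,c_2\}$ and the concentration-compactness analysis will be needed, but that is the subject of the later steps, not of this lemma.)
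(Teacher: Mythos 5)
Your proof is correct and follows essentially the same route as the paper: the norm equivalence from $0<\min\sigma(L)$, the identity $J[u]=\eta|||u|||^2$ on $N$ combined with the Sobolev estimate for the two-sided $H^1$ bounds, then Ekeland's principle plus a Lagrange-multiplier argument in which testing against $u_k$ and the non-degeneracy $G'[u_k]u_k=(1-p)|||u_k|||^2$ (bounded away from zero) forces the multiplier to vanish. The only difference is that you invoke the tangential (manifold) form of Ekeland's principle as a known packaged result, whereas the paper derives exactly that step --- converting the Ekeland inequality on $N$ into smallness of the tangential component of $\nabla J[u_k]$ --- in its separate Lemma~\ref{approx_lagrange} via an implicit-function-theorem parametrization of $N$.
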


\begin{proof} Let $|||u|||^2 = \int_{\R^n} |\nabla u|^2+ V(x) u^2\,dx$. Clearly $|||\cdot|||$ is equivalent to the standard norm on $H^1(\R^n)$ since $\min\sigma(L)>0$. Note that for $u\in N$ one finds $J[u]=\eta |||u|||^2$ with $\eta = \frac{1}{2}-\frac{1}{p+1}$. This explains why every minimizing sequence of $J$ on $N$ has to be bounded. Moreover every element $u\in N$ satisfies $|||u|||^2 = \int_{\R^n} \Gamma(x) |u|^{p+1}\,dx 
\leq C \|u\|_{H^1(\R^n)}^{p+1} \leq \bar C |||u|||^{p+1}$, and since $u\not = 0$, the lower bound on $u$ follows.

Now consider a sequence $0<\epsilon_k\to 0$ and a minimizing sequence $(v_k)_{k\in \N}$ of $J$ on $N$ such that $J[v_k]\leq c +\epsilon_k^2$. By using Ekeland's variational principle, cf. Struwe \cite{struwe}, there exists a second minimizing sequence $(u_k)_{k\in N}$ in $N$ such that $J[u_k]\leq J[v_k]$ and  
$$
J[u_k] < J[u] + \epsilon_k \|u-u_k\|_{H^1(\R^n)} \mbox{ for all } u\in N, \quad u \not = u_k.
$$
Consider the splitting $\nabla J[u_k] = s_k+t_k$ with $s_k \in (\kernel G'[u_k])^\perp$ and $t_k \in \kernel G'[u_k]$.
Due to the following Lemma \ref{approx_lagrange} we know that $\|t_k\|_{H^1(\R^n)}\leq \epsilon_k$. 
Note that the range $\range G'[u_k]=\R$ because $G'[u_k]u_k = (1-p)|||u_k|||^2 \not = 0$. Furthermore, $\spann(\nabla G[u_k])= (\kernel G'[u_k])^\perp$. Hence there exist real numbers $\sigma_k\in \R$ such that 
$s_k = -\sigma_k \nabla G[u_k]$, i.e.,
$$
\underbrace{\langle \nabla J[u_k],u_k\rangle}_{=0} + \sigma_k\langle \nabla G[u_k],u_k\rangle = \langle t_k,u_k\rangle.
$$
Thus,
$$
|\sigma_k| (p-1)|||u_k|||^2 \leq \epsilon_k \|u_k\|_{H^1(\R^n)},
$$
which shows that $\sigma_k\to 0$ as $k\to \infty$ since $|||u_k|||$ is bounded away from zero. Hence we have proved that $\nabla J[u_k]=s_k+t_k\to 0$ as $k\to \infty$. 
\end{proof}

\begin{lemma} Suppose $\epsilon>0$ and $u_0 \in N$ are such that $J[u_0]-J[u] \leq \epsilon\|u_0-u\|_{H^1(\R^n)}$ for all $u\in N$. If $\nabla J[u_0] = s+t$ with $s\in (\kernel G'[u_0])^\perp$ and $t \in \kernel G'[u_0]$, then $\|t\|_{H^1(\R^n)}\leq\epsilon$. 
\label{approx_lagrange}
\end{lemma}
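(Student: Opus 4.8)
The plan is to read the Ekeland-type hypothesis as saying that $u_0$ is an approximate critical point of $J$ constrained to $N$, and to convert this into a quantitative bound on the component of $\nabla J[u_0]$ tangent to $N$. Since the tangent space of $N$ at $u_0$ is exactly $\kernel G'[u_0]$, and $t$ is the orthogonal projection of $\nabla J[u_0]$ onto that tangent space, controlling the directional derivatives $J'[u_0]\phi = \langle \nabla J[u_0],\phi\rangle$ for $\phi\in\kernel G'[u_0]$ will directly bound $\|t\|_{H^1(\R^n)}$. This is the standard Lagrange-multiplier principle for Ekeland points, made quantitative.

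First I would record the regularity that makes the geometry work: $J,G\in C^1(H^1(\R^n))$ (standard under (H3) via the Sobolev embedding), and $G'[u_0]\neq 0$, since $G'[u_0]u_0 = (1-p)|||u_0|||^2\neq 0$ because $p>1$ and $u_0\neq 0$. Hence $u_0$ is a regular point of $G$ and $N$ is a $C^1$-manifold near $u_0$ with tangent space $\kernel G'[u_0]$.

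The key step, which I expect to be the only real obstacle, is to construct for each fixed $\phi\in\kernel G'[u_0]$ a $C^1$ curve lying on $N$ through $u_0$ with tangent vector $\phi$. I would set $F(\tau,\theta) := G[u_0+\tau\phi+\theta u_0]$, observe $F(0,0)=0$ and $\partial_\theta F(0,0)=G'[u_0]u_0\neq 0$, and apply the implicit function theorem to obtain a $C^1$ map $\theta(\tau)$ with $\theta(0)=0$ and $F(\tau,\theta(\tau))\equiv 0$. Differentiating this identity at $\tau=0$ and using $G'[u_0]\phi=0$ forces $\theta'(0)=0$. Consequently the curve $\gamma(\tau):=u_0+\tau\phi+\theta(\tau)u_0$ satisfies $\gamma(\tau)\in N$ for small $\tau$, $\gamma(0)=u_0$, $\gamma'(0)=\phi$, and $\|\gamma(\tau)-u_0\|_{H^1(\R^n)}/\tau\to\|\phi\|_{H^1(\R^n)}$ as $\tau\to 0$.

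Finally I would insert $u=\gamma(\tau)$ into the hypothesis $J[u_0]-J[u]\le\epsilon\|u_0-u\|_{H^1(\R^n)}$, divide by $\tau>0$, and let $\tau\to 0^+$; by the chain rule the left-hand side tends to $-\langle\nabla J[u_0],\phi\rangle$ and the right-hand side to $\epsilon\|\phi\|_{H^1(\R^n)}$, giving $-\langle\nabla J[u_0],\phi\rangle\le\epsilon\|\phi\|_{H^1(\R^n)}$. Replacing $\phi$ by $-\phi$ (which again lies in $\kernel G'[u_0]$) yields the reverse inequality, so $|\langle\nabla J[u_0],\phi\rangle|\le\epsilon\|\phi\|_{H^1(\R^n)}$ for every $\phi\in\kernel G'[u_0]$. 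Since $s\in(\kernel G'[u_0])^\perp$, we have $\langle\nabla J[u_0],\phi\rangle=\langle t,\phi\rangle$, and the choice $\phi=t\in\kernel G'[u_0]$ gives $\|t\|_{H^1(\R^n)}^2\le\epsilon\|t\|_{H^1(\R^n)}$, i.e. $\|t\|_{H^1(\R^n)}\le\epsilon$. Once the tangent curve in $N$ is in hand, this last part is a routine one-sided difference-quotient limit.
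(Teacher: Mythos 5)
Your proof is correct. Both you and the paper exploit the same two facts---$G'[u_0]u_0=(1-p)|||u_0|||^2\neq 0$ makes the implicit function theorem applicable with the correction taken in the $u_0$-direction, and the Ekeland-type inequality is then exploited through one-sided difference quotients---but the implementations differ in a way worth noting. The paper builds a full local chart of $N$ around $u_0$: it writes $u=\tau u_0+v$ with $v\in\spann(u_0)^\perp$, solves $G[\tau(v)u_0+v]=0$ by the implicit function theorem with $v$ ranging over the infinite-dimensional space $\spann(u_0)^\perp$, and then must prove a separate claim that the linearization $v\mapsto(\tau'(0)v)u_0+v$ is a bijection from $\spann(u_0)^\perp$ onto $\kernel (G'[u_0])$; this bijectivity argument is the longest part of the paper's proof and is needed precisely so that the resulting estimate covers \emph{every} $w\in\kernel (G'[u_0])$. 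You instead fix an arbitrary tangent vector $\phi\in\kernel (G'[u_0])$ from the start and run a one-dimensional implicit function theorem (in the scalar variables $(\tau,\theta)$ applied to $F(\tau,\theta)=G[u_0+\tau\phi+\theta u_0]$) to produce a curve $\gamma(\tau)=u_0+\tau\phi+\theta(\tau)u_0$ in $N$ with $\gamma'(0)=\phi$; since every tangent direction is hit by construction, no surjectivity or bijectivity lemma is required. This buys a shorter and more elementary argument; what the paper's chart buys in exchange is a genuine local parametrization of $N$ near $u_0$ (a stronger geometric statement), though nothing beyond the tangential estimate is actually used later. One small point you should make explicit: membership $\gamma(\tau)\in N$ also requires $\gamma(\tau)\neq 0$, which holds for small $\tau$ by continuity since $u_0\neq 0$.
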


\begin{proof} We split $u\in H^1(\R^n)$ such that $u=\tau u_0+v$ with $v\in \spann(u_0)^\perp$. The Fr\'{e}chet derivative $G'[u_0]$ may be split into the partial Fr\'{e}chet derivatives $\partial_1 G[u_0]:= G'[u_0]|_{\spann(u_0)}$, $\partial_2 G[u_0]:= G'[u_0]|_{\spann(u_0)^\perp}$, so that 
\begin{equation}
G'[u_0](\tau u_0+v) = \tau \partial_1 G[u_0] u_0 + \partial_2 G[u_0] v.
\label{split}
\end{equation}
Since $G'[u_0]u_0=(1-p)|||u_0|||^2\not =0$, we have that $\partial_1 G[u_0]$ is bijective, and hence by the implicit function theorem there exists a ball $B(0)\subset \spann(u_0)^\perp$ and a $C^1$-function $\tau: B(0) \to \R$ such that 
$$
G(\tau(v)u_0+v)=0, \qquad \tau(0)=1
$$
and 
\begin{equation}
(\tau'(0)v) u_0 = -(\partial_1 G[u_0])^{-1} \partial_2 G[u_0] v \mbox{ for all } v \in \spann(u_0)^\perp.
\label{ift}
\end{equation}
Define the linear map $\phi: \spann(u_0)^\perp\to H^1(\R^n)$ by 
$$
\phi(v) := (\tau'(0)v)u_0+v, \quad v\in \spann(u_0)^\perp.
$$
We claim that $\phi$ is a bijection between $\spann(u_0)^\perp$ and $\kernel(G'[u_0])$. First note that indeed $\phi$ maps into $\kernel(G'[u_0])$, which can be seen from \eqref{ift}. Let us prove that $\phi$ is injective: if $\phi(v)=(\tau'(0)v)u_0+v=0$, then clearly $v=0$. To see that $\phi$ is surjective, take $u\in\kernel(G'[u_0])$ and write $u=\theta u_0+v$ for some $\theta\in\R$ and some $v\in \spann(u_0)^\perp$. Then, by \eqref{split} and \eqref{ift}
$$
\theta u_0 = -(\partial_1 G[u_0])^{-1} \partial_2 G[u_0] v = (\tau'(0)v)u_0.
$$
Hence $u = \phi(v)$, and we have proved the bijectivity of the map $\phi$. Next, we compute for $u\in N$ near $u_0$, where $u=\tau(v)u_0+v$ with $v\in B(0)\subset \spann(u_0)^\perp$, that
$$
u-u_0 = (\tau(v)-1)u_0+v = (\tau'(0)v)u_0 + v + o(v)= \phi(v)+o(v) \mbox{ as } v\to 0.
$$
Therefore
\begin{equation}
J[u_0]-J[u] = J'[u_0](u_0-u) + o(u-u_0) = -J'[u_0]\phi(v)+o(v)
\label{J_a}
\end{equation}
and 
\begin{equation}
J[u_0]-J[u] \leq \epsilon\|u_0-u\|_{H^1(\R^n)} = \epsilon\|\phi(v)\|_{H^1(\R^n)}+o(v)
\label{J_b}
\end{equation}
by assumption. Setting $v=\lambda \bar v$ with $\bar v\in \spann(u_0)^\perp$ and letting $\lambda\to 0+$, we obtain from \eqref{J_a}, \eqref{J_b}
$$
-J'[u_0]\phi(\bar v) \leq \epsilon \|\phi(\bar v)\|_{H^1(\R^n)} \mbox{ for all } \bar v \in \spann(u_0)^\perp.
$$
Due to the bijectivity of $\phi$ and by considering both $\bar v$ and $-\bar v$ we get
$$
|J'[u_0]w| \leq \epsilon \|w\|_{H^1(\R^n)} \mbox{ for all } w \in \kernel(G'[u_0]),
$$
which implies the claim. 
\end{proof}

Theorem \ref{main} will follow almost immediately from the next result.

\begin{proposition}
For $\delta>0$ let $S_\delta = (-\delta,\delta)\times \R^{n-1}\subset \R^n$ denote a strip of width $2\delta$. If there exists $\delta>0$ such that for the sequence of Lemma~\ref{minimizing_sequence} one has $\liminf_{k\in \N} \|u_k\|_{H^1(S_\delta)}=0$, then $c\geq \min\{c_1,c_2\}$.
\label{prop_result}
\end{proposition}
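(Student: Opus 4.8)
The plan is to show that if the $H^1$-mass of the minimizing sequence near the interface vanishes along a subsequence, then $u_k$ asymptotically decouples into a piece governed by the purely periodic data $(V_1,\Gamma_1)$ and a piece governed by $(V_2,\Gamma_2)$, neither of which can carry energy below $c_1$, resp. $c_2$. First I would pass to a subsequence (not relabelled) with $\|u_k\|_{H^1(S_\delta)}\to 0$, fix a smooth cut-off $\chi=\chi(x_1)$ with $0\le\chi\le1$, $\chi\equiv 0$ on $\{x_1\le 0\}$, $\chi\equiv 1$ on $\{x_1\ge\delta\}$ and bounded derivative, and set $v_k=\chi u_k$, $w_k=(1-\chi)u_k$, so $u_k=v_k+w_k$. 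By construction $\nabla\chi$ and $\chi(1-\chi)$ are supported in $\{0\le x_1\le\delta\}\subset\overline{S_\delta}$, the function $v_k$ is supported in $\overline{\R^n_+}$ (where $V=V_1$, $\Gamma=\Gamma_1$ a.e.), so that $J[v_k]=J_1[v_k]$ and $G[v_k]=G_1[v_k]$ exactly, while $w_k$ is supported in $\{x_1\le\delta\}$.

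Next I would establish additivity of the functionals up to $o(1)$. Writing $|||u|||_i^2=\int_{\R^n}|\nabla u|^2+V_i u^2$, expansion of the squares in $\int|\nabla u_k|^2$, $\int V u_k^2$ and $\int\Gamma|u_k|^{p+1}$ produces only cross terms and potential/nonlinearity mismatches supported in $S_\delta$; using the boundedness of $V,\Gamma$ together with the Sobolev embedding $H^1(S_\delta)\hookrightarrow L^{p+1}(S_\delta)$ (valid since $p+1<2^\ast$), every such term is bounded by $C(\|u_k\|_{H^1(S_\delta)}^2+\|u_k\|_{H^1(S_\delta)}^{p+1})\to 0$. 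This yields $|||u_k|||^2=|||v_k|||_1^2+|||w_k|||_2^2+o(1)$ and $\int\Gamma|u_k|^{p+1}=\int\Gamma_1|v_k|^{p+1}+\int\Gamma_2|w_k|^{p+1}+o(1)$, hence $J[u_k]=J_1[v_k]+J_2[w_k]+o(1)$ and $G[u_k]=G_1[v_k]+G_2[w_k]+o(1)$. Crucially, since the sequence of Lemma~\ref{minimizing_sequence} satisfies $\|\nabla J[u_k]\|_{H^1}\to 0$ while $v_k,w_k$ are bounded in $H^1$, one has $J'[u_k]v_k\to 0$ and $J'[u_k]w_k\to 0$; a computation identical to the ones above shows $J'[u_k]v_k=G_1[v_k]+o(1)$ and $J'[u_k]w_k=G_2[w_k]+o(1)$ (the discrepancies being interface terms supported in $S_\delta$), so that $G_1[v_k]\to 0$ and $G_2[w_k]\to 0$ individually.

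Finally I would rescale each piece onto its Nehari manifold. Put $\alpha_k=|||v_k|||_1^2\ge 0$ and $\beta_k=|||w_k|||_2^2\ge 0$ (nonnegative because $0<\min\sigma(L_i)$), and use the identity $J_i[u]=\eta|||u|||_i^2+\tfrac{1}{p+1}G_i[u]$ with $\eta=\tfrac12-\tfrac{1}{p+1}$. Since $G_1[v_k],G_2[w_k]\to 0$, we get $J[u_k]=\eta(\alpha_k+\beta_k)+o(1)\to c$. Passing to a further subsequence with $\alpha_k\to\alpha$ and $\beta_k\to\beta$ gives $\eta(\alpha+\beta)=c$, and $c>0$ (because $\|u_k\|_{H^1}$ is bounded away from $0$ and $|||\cdot|||$ is equivalent to the $H^1$-norm), so at least one limit is positive. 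If $\alpha>0$, then $\int\Gamma_1|v_k|^{p+1}=\alpha_k-G_1[v_k]\to\alpha>0$, so for large $k$ the rescaling $t_k v_k\in N_1$ with $t_k^{p-1}=\alpha_k/\!\int\Gamma_1|v_k|^{p+1}\to 1$ is admissible, whence $c_1\le J_1[t_k v_k]=\eta t_k^2\alpha_k\to\eta\alpha$; symmetrically $\beta>0$ yields $\eta\beta\ge c_2$. A vanishing piece contributes $0$, so in every case $c=\eta\alpha+\eta\beta\ge\min\{c_1,c_2\}$, which is the claim.

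The main obstacle is the decoupling step, and within it the use of $J'[u_k]\to 0$: it is not enough to know that the mismatches at the interface vanish, one must show that $J'[u_k]\to 0$ forces each truncated piece asymptotically onto its own Nehari manifold ($G_1[v_k],G_2[w_k]\to 0$). Without this the rescaling factors $t_k$ (and its analogue for $w_k$) need not tend to $1$, and the limiting energy of each piece would not be pinned to $c_i$; the final case analysis is then routine.
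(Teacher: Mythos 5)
Your proof is correct, and it takes a genuinely different route from the paper's. The paper proceeds by locating the mass of $(u_k)$: Lions' concentration-compactness lemma (Lemma~\ref{cc}) produces concentration points, the unique continuation theorem rules out concentration at bounded $x_1$ (this is where the hypothesis $\|u_k\|_{H^1(S_\delta)}\to 0$ enters), and after translating by lattice vectors one extracts a weak limit $\bar v_0$ which is shown, via compact Sobolev embeddings and continuity of the Nemytskii operator, to be a nontrivial solution of one of the two purely periodic problems; weak lower semicontinuity of the quadratic form then gives $c\geq J_i[\bar v_0]\geq c_i$. You avoid all of this machinery: you cut $u_k$ into a right piece $v_k=\chi u_k$ and a left piece $w_k=(1-\chi)u_k$, prove additivity of the functionals up to $o(1)$ (essentially the paper's Lemma~\ref{properties}, for your exact partition of unity), and then --- this is the step that replaces concentration-compactness --- test $J'[u_k]\to 0$ against the bounded sequences $v_k$ and $w_k$ themselves to conclude $G_1[v_k]\to 0$ and $G_2[w_k]\to 0$ \emph{separately}, so that whichever piece retains mass can be rescaled onto its Nehari manifold with factor $t_k\to 1$, and the characterization $c_i=\inf_{N_i}J_i$ applies. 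All the error terms you must control are supported in the transition region inside $S_\delta$ and are handled exactly as in the paper, so the estimates go through. Your route is more elementary (no Lions lemma, no Schechter--Simon unique continuation, no weak limits or identification of a limiting equation), and when both pieces retain mass it even yields the stronger bound $c\geq c_1+c_2$. What the paper's heavier scheme buys is reuse: the same concentration/translation/weak-limit apparatus is what produces the actual minimizer in the proof of Theorem~\ref{main}, where a lower bound on $c$ is not enough; for the proposition taken in isolation, your argument is leaner.
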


Proposition \ref{prop_result} will be proved via some intermediate results. We define a standard one-dimensional $C^\infty$ cut-off function such that
$$
\chi_\delta(t) = 1 \mbox{ for } t\geq \delta, \quad \chi_\delta(t)=0 \mbox{ for } t\leq 0
$$
and $0\leq \chi_\delta\leq 1$, $\chi_\delta'\geq 0$. From $\chi_\delta$ we obtain further cut-off functions
$$
\chi_\delta^+(x) := \chi_\delta(x_1), \quad \chi_\delta^-(x) := \chi_\delta(-x_1), \quad x=(x_1,\ldots,x_n)\in \R^n.
$$
Note that $\chi_\delta^\pm$ is supported in the half-space $\R^n_\pm$ and $\nabla\chi_\delta^\pm$ is supported in the strip $S_\delta$. 

\begin{lemma} 
Let $(u_k)_{k\in \N}$ be a bounded sequence in $H^1(\R^n)$ such that $\|u_k\|_{H^1(S_\delta)}\to 0$ as $k\to \infty$ and define
$$
v_k(x) := u_k(x)\chi_\delta^+(x), \quad w_k(x) := u_k(x)\chi_\delta^-(x).
$$
Then 
\begin{itemize}
\item[(i)] $\|u_k\|^2 = \|v_k\|^2+\|w_k\|^2+ o(1)$ where $\|\cdot\|$ can be the $L^2$-norm or the $H^1$-norm on $\R^n$,
\item[(ii)] $J[u_k] = J_1[v_k]+J_2[w_k]+o(1)$,
\item[(iii)] $J'[u_k] = J_1'[v_k]+J_2'[w_k]+o(1)$,
\end{itemize}
where $o(1)$ denotes terms converging to $0$ as $k\to \infty$ and convergence in (iii) is understood in the sense of $H^{-1}(\R^n):=(H^1(\R^n))^\ast$.
\label{properties}
\end{lemma}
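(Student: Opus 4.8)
The plan is to exploit two structural facts about the cut-offs. First, since $\chi_\delta^+$ is supported in $\R^n_+$ and $\chi_\delta^-$ in $\R^n_-$, the products $v_k,w_k$ are supported where $V\equiv V_1,\Gamma\equiv\Gamma_1$, respectively $V\equiv V_2,\Gamma\equiv\Gamma_2$; hence in $J_1[v_k]$ and $J_2[w_k]$ one may silently replace $V_1,\Gamma_1,V_2,\Gamma_2$ by $V,\Gamma$ without changing the values. Second, for every exponent $m\geq 1$ the ``partition defect'' $1-(\chi_\delta^+)^m-(\chi_\delta^-)^m$ is bounded and vanishes for $|x_1|\geq\delta$, hence is supported in the strip $S_\delta$; likewise $\nabla\chi_\delta^\pm$ is bounded and supported in $S_\delta$. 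The only extra analytic ingredient I need is the Sobolev embedding $H^1(S_\delta)\hookrightarrow L^{p+1}(S_\delta)$ on the slab (valid since $2<p+1<2^\ast$ and $S_\delta$ has the uniform cone property), which upgrades the hypothesis $\|u_k\|_{H^1(S_\delta)}\to 0$ to $\|u_k\|_{L^{p+1}(S_\delta)}\to 0$.

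For (i) and (ii) I would expand $\nabla v_k=\chi_\delta^+\nabla u_k+u_k\nabla\chi_\delta^+$ and discard the two terms containing $\nabla\chi_\delta^+$: each is supported in $S_\delta$ and is bounded by $C\|u_k\|_{H^1(S_\delta)}^2=o(1)$ via Cauchy--Schwarz. After this reduction every quantity in $\|v_k\|^2+\|w_k\|^2$ and in $J_1[v_k]+J_2[w_k]$ becomes an integral of $|\nabla u_k|^2$, $u_k^2$ or $|u_k|^{p+1}$ weighted by $(\chi_\delta^\pm)^m$. Summing the $+$ and $-$ contributions produces the weight $(\chi_\delta^+)^m+(\chi_\delta^-)^m$, and comparing with the corresponding unweighted integral of $u_k$ leaves exactly the partition defect, supported in $S_\delta$. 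The $L^2$- and gradient-defects are controlled by $C\|u_k\|_{H^1(S_\delta)}^2$ and the nonlinear defect by $C\|u_k\|_{L^{p+1}(S_\delta)}^{p+1}$; all are $o(1)$. This yields (i) for both norms and (ii).

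For (iii) I would test the functional difference against an arbitrary $\phi$ with $\|\phi\|_{H^1(\R^n)}\leq 1$ and show the bound is uniform in $\phi$. Writing out $J'[u_k]\phi-J_1'[v_k]\phi-J_2'[w_k]\phi$ and using $|v_k|^{p-1}v_k=|u_k|^{p-1}u_k(\chi_\delta^+)^p$ together with the support identities, all three differences collapse to integrals over $S_\delta$ against the partition defects and $\nabla\chi_\delta^\pm$. The gradient and potential terms are controlled by $C\|u_k\|_{H^1(S_\delta)}\|\phi\|_{H^1(\R^n)}$ via Cauchy--Schwarz, and the nonlinear term by $C\|u_k\|_{L^{p+1}(S_\delta)}^{p}\|\phi\|_{L^{p+1}(S_\delta)}\leq C\|u_k\|_{H^1(S_\delta)}^p\|\phi\|_{H^1(\R^n)}$ via H\"older with exponents $\tfrac{p+1}{p},p+1$ and the slab embedding. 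Taking the supremum over $\|\phi\|_{H^1(\R^n)}\leq 1$ gives $\|J'[u_k]-J_1'[v_k]-J_2'[w_k]\|_{H^{-1}(\R^n)}\to 0$.

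The routine part is the bookkeeping of the quadratic terms; the step requiring genuine care is the nonlinear one, where I must pass from $\|u_k\|_{H^1(S_\delta)}\to 0$ to decay of the $L^{p+1}$-integrals over the unbounded slab. The main obstacle is therefore verifying the Sobolev embedding on $S_\delta$ with a constant independent of $k$ (it is, since $S_\delta$ satisfies a uniform cone condition and $p+1<2^\ast$) and, for (iii), ensuring that every estimate is uniform over the unit ball of $H^1(\R^n)$ so that it genuinely controls the $H^{-1}$-norm.
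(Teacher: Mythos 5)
Your proposal is correct and follows essentially the same route as the paper's proof: both rest on the disjoint supports of $\chi_\delta^\pm$ (so $J_i$ of each piece equals $J$ of that piece), the fact that all partition defects and cut-off gradients are supported in the strip $S_\delta$ where $\|u_k\|_{H^1}\to 0$, the Sobolev embedding $H^1(S_\delta)\hookrightarrow L^{p+1}(S_\delta)$ for the nonlinear terms, and, for (iii), estimates uniform over the unit ball of $H^1(\R^n)$. The only difference is cosmetic bookkeeping: you expand $v_k,w_k$ and compare to $u_k$ via the defects $1-(\chi_\delta^+)^m-(\chi_\delta^-)^m$, while the paper expands $u_k$ in terms of $v_k,w_k$ plus remainder terms, which is the same algebra read in the opposite direction.
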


\begin{proof} (i): First note that
\begin{eqnarray*}
 u_k^2 &=& u_k^2(\chi_\delta^++\chi_\delta^-)^2+u_k^2(1-\chi_\delta^+-\chi_\delta^-)^2+
2u_k^2(\chi_\delta^++\chi_\delta^-)(1-\chi_\delta^+-\chi_\delta^-)\\
& = & v_k^2+w_k^2+u_k^2(1-\chi_\delta^+-\chi_\delta^-)^2+2u_k^2(\chi_\delta^++\chi_\delta^-)(1-\chi_\delta^+-\chi_\delta^-).
\end{eqnarray*}
Furthermore, since
\begin{equation}
\nabla u_k = \nabla v_k + \nabla w_k + (1-\chi_\delta^+-\chi_\delta^-)\nabla u_k -u_k\nabla(\chi_\delta^++\chi_\delta^-),
\label{gradient}
\end{equation}
we find
\begin{align*}
|\nabla u_k|^2 =& |\nabla v_k|^2+|\nabla w_k|^2+(1-\chi_\delta^+-\chi_\delta^-)^2 |\nabla u_k|^2+ u_k^2(|\nabla \chi_\delta^+|^2+|\nabla \chi_\delta^-|^2)\\
& +2(1-\chi_\delta^+-\chi_\delta^-)(\nabla v_k+\nabla w_k)\cdot \nabla u_k-2u_k(\nabla v_k+\nabla w_k)\cdot \nabla (\chi_\delta^++\chi_\delta^-)\\
& -2u_k(1-\chi_\delta^+-\chi_\delta^-)\nabla u_k\cdot\nabla(\chi_\delta^++\chi_\delta^-).
\end{align*}
Integrating these expressions over $\R^n$ and observing that terms involving $(1-\chi_\delta^+-\chi_\delta^-)$ or $\nabla(\chi_\delta^++\chi_\delta^-)$ are supported in $S_\delta$, where $\|u_k\|_{H^1(S_\delta)}$ tends to zero, and that the sequences $(v_k)_{k\in\N}, (w_k)_{k\in \N}$ are bounded in $H^1(\R^n)$, we obtain the claim (i).

\smallskip

\noindent
(ii): Let us compute
$$
\int_{\R^n} \Gamma(x) |u_k|^{p+1}\,dx = \int_{\R^n_-} \Gamma_2(x) |w_k|^{p+1}\,dx + \int_{\R^n_+} \Gamma_1(x) |v_k|^{p+1}\,dx + I,
$$
where 
$$ I=\int_{S_\delta}\Gamma(x)\left(|u_k|^{p+1} -|w_k|^{p+1}-|v_k|^{p+1} \right)\,dx.
$$
By the assumption that $\|u_k\|_{H^1(S_\delta)}$ tends to zero and by the Sobolev-embedding theorem $I$ converges to $0$ as $k\to\infty$. A similar computation shows 
$$
\int_{\R^n} V(x) u_k^2\,dx = \int_{\R^n_-} V_2(x) w_k^2\,dx + \int_{\R^n_+} V_1(x) v_k^2\,dx + o(1) \mbox{ as } k\to \infty.
$$ 
Together with (i) we get the claim in (ii).

\smallskip

\noindent
(iii): Using \eqref{gradient}, we obtain
\begin{align*}
J'[u_k]&\phi = J'[v_k]\phi + J'[w_k]\phi \\
&+\int_{S_\delta} \Big((\nabla u_k \cdot \nabla \phi+V(x)u_k\phi)(1-\chi_\delta^+(x)-\chi_\delta^-(x)) - u_k\nabla\phi\cdot (\nabla \chi_\delta^+(x)+\nabla \chi_\delta^-(x))\Big)\,dx\\
&+\tilde{I}_k(\phi),
\end{align*}
where 
$$
\tilde{I}_k(\phi)=  \int_{S_\delta} \Gamma(x)\left(|u_k|^{p-1}u_k-|v_k|^{p-1}v_k-|w_k|^{p-1}w_k\right)\phi\,dx,
$$
and hence $\tilde{I}_k$ tends to $0$ in $H^{-1}(\R^n)$ as $k\to \infty$. Thus
\begin{eqnarray*}
J'[u_k]\phi= J_1'[v_k]\phi + J_2'[w_k]\phi + \tilde{I}_k(\phi) + \int_{S_\delta} \big((a_1(x)\nabla u_k+ a_2(x) u_k\big)\cdot \nabla \phi +a_3(x)u_k\phi\,dx,
\end{eqnarray*}
where the functions $a_1,\ldots,a_3$ are bounded on $S_\delta$. Using $\tilde{I}_k\to 0$ in $H^{-1}(\R^n)$ and once more that $u_k\to 0$ in $H^1(S_\delta)$ as $k\to \infty$ we obtain the claim of (iii).
\end{proof}

In order to proceed with the proof of Proposition~\ref{prop_result}, we quote the following famous concen\-tration-compactness result, cf. Lions \cite{Lions84}. With a minor adaptation of the proof given in Willem \cite{willem} one can state the following version. 

\begin{lemma}[P.L.Lions, 1984] For $0<a\leq \infty$ let $S_a=(-a,a)\times \R^{n-1}$. Let $0<r<a$, $s_0\in [2,2^\ast)$ and assume that $(u_k)_{k\in\N}$ is a bounded sequence in $H^1(S_{2a})$ such that 
$$
\lim_{k\to \infty} \sup_{\xi\in S_a} \left(\int_{B_r(\xi)} |u_k|^{s_0}\,dx\right)=0.
$$
Then $u_k\to 0$ as $k\to \infty$ in $L^s(S_a)$ for all $s\in (2,2^\ast)$.
\label{cc}
\end{lemma}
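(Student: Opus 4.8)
The plan is to deduce the $L^s$-convergence from a purely local interpolation estimate summed over a bounded-overlap covering of the strip; this is exactly Willem's vanishing argument \cite{willem} adapted to the finite strip. First I would fix $r<a$ and choose a countable family of centers $\xi_j\in S_a$ (e.g. from a sufficiently fine lattice) so that the balls $B_j:=B_r(\xi_j)$ cover $S_a$, have uniformly bounded overlap $\sum_j \mathbf{1}_{B_j}\le N=N(n)$, and all lie inside $S_{2a}$ — the last property being possible precisely because $r<a$. Keeping the balls inside $S_{2a}$ is what lets me control the local energies by the global bound $\|u_k\|_{H^1(S_{2a})}\le M$.

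Next I would establish a local estimate on a single ball $B=B_r(\xi)$. Fixing $s\in(s_0,2^\ast)$, interpolation gives $\|u\|_{L^s(B)}\le \|u\|_{L^{s_0}(B)}^{1-\lambda}\|u\|_{L^{2^\ast}(B)}^{\lambda}$ with $\tfrac1s=\tfrac{1-\lambda}{s_0}+\tfrac{\lambda}{2^\ast}$ and $\lambda\in(0,1)$; the Sobolev embedding $H^1(B)\hookrightarrow L^{2^\ast}(B)$ (with constant uniform in $\xi$ by translation invariance) then yields, after raising to the $s$-th power,
\[
\int_{B}|u|^s\,dx \le C\Big(\int_{B}|u|^{s_0}\,dx\Big)^{(1-\lambda)s/s_0}\|u\|_{H^1(B)}^{\lambda s}.
\]
I would then restrict to the exponents with $\lambda s\ge 2$; a short computation shows that $\lambda s$ is increasing in $s$ and equals $2$ at a threshold $s_\ast\in(s_0,2^\ast)$, so $\lambda s\ge 2$ holds exactly for $s\in[s_\ast,2^\ast)$. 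For such $s$ I can replace $\|u\|_{H^1(B)}^{\lambda s}$ by $M^{\lambda s-2}\|u\|_{H^1(B)}^{2}$.

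Summing over $j$ and using the bounded overlap $\sum_j\|u_k\|_{H^1(B_j)}^2\le N\|u_k\|_{H^1(S_{2a})}^2\le NM^2$, I obtain
\[
\int_{S_a}|u_k|^s\,dx \le \sum_j\int_{B_j}|u_k|^s\,dx \le C\,M^{\lambda s-2}\Big(\sup_{\xi\in S_a}\int_{B_r(\xi)}|u_k|^{s_0}\,dx\Big)^{(1-\lambda)s/s_0}\sum_j\|u_k\|_{H^1(B_j)}^2 .
\]
Since the exponent $(1-\lambda)s/s_0$ is strictly positive and the supremum tends to $0$ by hypothesis, this gives $u_k\to0$ in $L^s(S_a)$ for every $s\in[s_\ast,2^\ast)$. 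For the remaining range $s\in(2,s_\ast)$ I would interpolate once more, now globally on the strip: $\|u_k\|_{L^s(S_a)}\le\|u_k\|_{L^2(S_a)}^{1-\theta}\|u_k\|_{L^{s_\ast}(S_a)}^{\theta}$ with $\theta\in(0,1)$. Here $\|u_k\|_{L^2(S_a)}\le M$ is only bounded, but since $\theta>0$ and $\|u_k\|_{L^{s_\ast}(S_a)}\to0$ the product still vanishes, which completes all $s\in(2,2^\ast)$.

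The main obstacle is the exponent bookkeeping: to split off the vanishing quantity $\sup_\xi\int_{B_r(\xi)}|u_k|^{s_0}$ with a positive power while keeping the companion sum of local $H^1$-energies summable, one is forced into the constraint $\lambda s\ge 2$, which by itself only reaches exponents $s$ near $2^\ast$; the low range must be recovered by the separate global interpolation step, which works only because a single vanishing factor suffices. Two routine adaptations should also be flagged: when $n\le 2$ one has $2^\ast=\infty$, so the Sobolev step must use $H^1(B)\hookrightarrow L^{\bar s}(B)$ for an arbitrary fixed finite $\bar s>s$ in place of $2^\ast$; and the covering must honour $r<a$ so that every ball stays in $S_{2a}$, which is exactly the hypothesis relating the two strips.
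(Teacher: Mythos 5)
Your proof is correct and follows essentially the same route as the paper: the paper establishes this lemma simply by citing Willem's vanishing lemma with a ``minor adaptation,'' and your argument --- local interpolation plus Sobolev on balls, a bounded-overlap covering of $S_a$ by balls kept inside $S_{2a}$ (which is exactly where $r<a$ enters), the threshold $\lambda s\ge 2$ to make the local $H^1$-energies summable, and a final global interpolation to reach the low exponents $s\in(2,s_\ast)$ --- is precisely that adaptation written out in full. Nothing further is needed.
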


\noindent
{\em Proof of Proposition \ref{prop_result}:} By assumption we may select a subsequence (again denoted by $(u_k)$) from the sequence of Lemma~\ref{minimizing_sequence} such that $\lim_{k\rightarrow \infty} \|u_k\|_{H^1(S_\delta)}=0$. Recall that $u_k\in N$ satisfies
$$
|||u_k|||^2 = \int_{\R^n} \Gamma(x)|u_k|^{p+1}\,dx
$$
and that $|||u_k|||$ is bounded and bounded away from $0$ by Lemma~\ref{minimizing_sequence}. Hence no subsequence of  $\|u_k\|_{L^{p+1}(\R^n)}$ converges to $0$ as $k\to \infty$. By the concentration-compactness result of Lemma~\ref{cc} with $a=\infty$ we have that for any $r>0$ there exists $\epsilon>0$ such that 
$$
\liminf_{k\in \N} \sup_{\xi\in \R^n} \int_{B_r(\xi)} u_k^2 \,dx \geq 2\epsilon,
$$
and hence that there exists a subsequence of $u_k$ (again denoted by $u_k$) and points $\xi_k\in \R^n$ such that 
\begin{equation}
\int_{B_r(\xi_k)} u_k^2\,dx \geq \epsilon \mbox{ for all } k \in \N. 
\label{cc1}
\end{equation}
Next we choose vectors $z_k \in {\mathcal Z}^{(n)}=T_1\Z\times T_2\Z\times\ldots\times T_n\Z$ such that $(z_k-\xi_k)_{k\in \N}$ is bounded (recall that $T_1=1, T_2,\ldots,T_n>0$ denote the periodicities of the functions $V_1,V_2,\Gamma_1,\Gamma_2$ in the coordinate directions $x_1,\ldots,x_n$). Then there exists a radius $\rho\geq r+\sup_{k\in\N}|z_k-\xi_k|$ such that 
\begin{equation}
\int_{B_\rho(z_k)} u_k^2\,dx \geq \epsilon \mbox{ for all } k \in \N. 
\label{cca}
\end{equation}
We show that $(z_k)_{k\in \N}$ is unbounded in the $x_1$-direction. Assume the contrary and define for $x\in \R^n$
$$
\bar u_k(x) := u_k(x+z_k'), \quad z_k' = (0,(z_k)_2,\ldots,(z_k)_n).
$$
By the boundedness of $(z_k)_1$ there exists a radius $R\geq \rho$ such that 
\begin{equation}
\int_{B_R(0)} \bar u_k^2\,dx \geq \epsilon \mbox{ for all } k \in \N. 
\label{cca_shift}
\end{equation}
By taking a weakly convergent subsequence $\bar u_k\rightharpoonup \bar u_0$ in $H^1(\R^n)$ and using the compactness of the embedding $H^1(B_R(0))\to L^2(B_R(0))$, we have $\bar u_0\not = 0$. Moreover, if $\phi\in C_0^\infty(\R^n)$ and if we set $\phi_k(x):= \phi(x-z'_k)$, then we can use the periodicity of $V, \Gamma$ in the directions $x_2,\ldots,x_n$ to see that
$$
o(1) = J'[u_k]\phi_k = J'[\bar u_k]\phi \mbox{ as } k\to \infty,
$$ 
where the first equality is a property of the sequence $(u_k)_{k\in\N}$ as stated in Lemma~\ref{minimizing_sequence}. On one hand, $\int_{\R^n} \nabla \bar u_k \cdot \nabla \phi + V(x)\bar u_k\phi\,dx \to \int_{\R^n} \nabla \bar u_0\cdot \nabla \phi+ V(x) \bar u_0 \phi\,dx$ by the weak convergence of the sequence $\bar u_k\rightharpoonup \bar{u}_0$. On the other hand, by the compact Sobolev embedding $H^1(K)\to L^{p+1}(K)$ with $K=\supp\phi$ and the continuity of the Nemytskii operator $u \mapsto |u|^{p-1}u$ as a map from $L^{p+1}(K)$ to $L^\frac{p+1}{p}(K)$, cf. Renardy-Rogers \cite{RR}, we find that $\int_{\R^n} \Gamma(x) |\bar u_k|^{p-1}\bar u_k \phi\,dx \to \int_{\R^n} \Gamma(x) |\bar u_0|^{p-1} \bar u_0\phi\,dx$. Hence we have verified that the limit function $\bar u_0$ is a weak solution of $L \bar u_0=\Gamma(x) |\bar u_0|^{p-1}\bar u_0$ in $\R^n$. Standard elliptic regularity implies that $\bar u_0$ is a strong $W_{loc}^{2,q}(\R^n)$-solution for any $q\geq 1$. Since we also know that $\bar u_0\equiv 0$ on $S_\delta$, we can apply the unique continuation theorem, cf. Schechter, Simon \cite{sche_si} or Amrein et al. \cite{am_ber_ge}, to find the contradiction $\bar u_0\equiv 0$ in $\R^n$. Thus, $(z_k)_{k\in \N}$ is indeed unbounded in the $x_1$-direction.

\smallskip

Let $v_k, w_k$ be defined as in Lemma~\ref{properties} and define 
$$
\bar v_k(x) := v_k(x+z_k), \quad \bar w_k(x) := w_k(x+z_k), \quad x\in \R^n
$$
and observe that both $\bar v_k$ and $\bar w_k$ are bounded sequences in $H^1(\R^n)$. Moreover, for almost all $k$ we have 
$$ 
\|\bar v_k\|_{L^2(B_R(0))} \geq \frac{\sqrt{\epsilon}}{3} \mbox{ or } 
\|\bar w_k\|_{L^2(B_R(0))} \geq \frac{\sqrt{\epsilon}}{3}
$$
by Lemma~\ref{properties}(i) and \eqref{cca_shift}. Taking weakly convergent subsequences, we get that $\bar v_k\rightharpoonup \bar v_0$ and $\bar w_k\rightharpoonup \bar w_0$ where $\bar v_0\not = 0$ or $\bar w_0 \not = 0$. 
Since $z_k$ is unbounded in the $x_1$-direction, we may assume that either $(z_k)_1\to +\infty$ or $(z_k)_1\to -\infty$ as $k\to \infty$. In the first case $\bar w_k\rightharpoonup 0$ while in the second case $\bar v_k\rightharpoonup 0$ as $k\to\infty$. In the following, let us consider only the case $(z_k)_1\to +\infty$. Then, from Lemma~\ref{properties} and the periodicity of $V_1,V_2,\Gamma_1,\Gamma_2$ we have for any bounded sequence $\phi_k\in H^1(\R^n)$ that 
\begin{align}
o(1) &= J'[u_k]\phi_k=J_1'[v_k]\phi_k+J_2'[w_k]\phi_k+o(1) \label{special}\\
&=J_1'[\bar v_k]\phi_k(\cdot+z_k)+J_2'[\bar w_k]\phi_k(\cdot+z_k) +o(1) \mbox{ as } k\to\infty. \nonumber
\end{align}
If we apply \eqref{special} to $\phi_k(x) := \phi(x-z_k)$, where $\phi\in C_0^\infty(\R^n)$, then 
$$
o(1) = J_1'[\bar v_k]\phi+J_2'[\bar w_k]\phi+o(1) = J_1'[\bar v_k]\phi+o(1)
$$
since $\bar w_k\rightharpoonup 0$ as $k\to\infty$ (where we have again used the compact Sobolev embedding and the continuity of the Nemytskii operator). From this we can deduce that $\bar v_0$ is a nontrivial solution of 
\begin{equation}
L_1 \bar v_0= \Gamma_1(x) |\bar v_0|^{p-1}\bar v_0 \mbox{ in } \R^n.
\label{nls_v0}
\end{equation}
Applying \eqref{special} with $\phi_k = u_k$, one obtains
\begin{align*}
o(1) & = J'[u_k]u_k = J_1'[v_k]u_k + J_2'[w_k]u_k + o(1) \\
& = J_1'[v_k]v_k + J_2'[w_k]w_k + o(1) \\
& = J_1'[\bar v_k]\bar v_k + J_2'[\bar w_k]\bar w_k + o(1),
\end{align*}
which together with \eqref{nls_v0} implies
\begin{eqnarray}
\lefteqn{\liminf_{k\in \N} (J_1[\bar v_k]+J_2[\bar w_k])} \nonumber\\
&= &\liminf_{k\in\N} \Big(J_1[\bar v_k]+J_2[\bar w_k]-\frac{1}{p+1} (\underbrace{J_1'[\bar v_k]\bar v_k+J_2'[\bar w_k]\bar w_k}_{=o(1) \mbox{ as } k\to\infty})\Big)\nonumber\\
&= & \liminf_{k\in \N} \left(\frac{1}{2}-\frac{1}{p+1}\right)\int_{\R^n}
  \left(|\nabla \bar v_k|^2+V_1(x)\bar v_k^2+|\nabla \bar w_k|^2+V_2(x)\bar w_k^2\right)\,dx \label{split_J}\\
& \geq & \left(\frac{1}{2}-\frac{1}{p+1}\right)\int_{\R^n} |\nabla \bar v_0|^2+V_1(x)\bar v_0^2\,dx \nonumber\\
 &= & J_1[\bar v_0]. \nonumber
\end{eqnarray}
Lemma~\ref{properties}(ii) also implies
$$
J[u_k] = J_1[v_k]+J_2[w_k]+o(1)=J_1[\bar v_k]+J_2[\bar w_k]+o(1) \mbox{ as } k\to\infty,
$$
which together with \eqref{split_J} yields the result
$$
c= \lim_{k\to\infty} J[u_k] \geq J_1[\bar v_0] \geq c_1.
$$
In the case where $(z_k)_1\to -\infty$ we would have obtained $c=\lim_{k\to \infty} J[u_k]\geq c_2$. Hence, in any case we find $c\geq \min\{c_1,c_2\}$, which finishes the proof of Proposition~\ref{prop_result}. \qed

\medskip

\noindent
{\em Proof of Theorem~\ref{main}:} As in Lemma~\ref{minimizing_sequence} let $(u_k)_{k\in \N}$ be a minimizing sequence of $J$ on the Nehari-manifold $N$ such that $J'[u_k]\to 0$ as $k\to\infty$. From Proposition~\ref{prop_result} we know that $\liminf_{k\in\N} \|u_k\|_{H^1(S_\delta)}>0$ for any $\delta>0$. Let us fix $\delta>0$. By the following Lemma~\ref{neues_lemma} we know that for $0<R<2\delta$
\begin{equation}
\liminf_{k\in\N}\sup_{\xi\in S_{2\delta}} \left(\int_{B_R(\xi)} |u_k|^2\,dx\right)>0.
\label{true}
\end{equation}
Thus, there exist centers $\xi_k\in S_{2\delta}$ and $\epsilon>0$ such that 
$$
\int_{B_R(\xi_k)} u_k^2\,dx \geq \epsilon \mbox{ for all } k \in \N,
$$
and by choosing suitable vectors $z_k\in {\mathcal Z}^{(n)}$ with $(z_k)_1=0$ and a radius $\rho\geq R$, we may achieve that 
$$
\int_{B_\rho(z_k)} u_k^2\,dx \geq \epsilon \mbox{ for all } k \in \N.
$$
Setting 
$$
\bar u_k(x) := u_k(x+z_k), \quad x\in \R^n,
$$
we find 
$$
\int_{B_\rho(0)} \bar u_k^2\,dx \geq \epsilon \mbox{ for all } k \in \N. 
$$
Taking a weakly convergent subsequence $\bar u_k\rightharpoonup \bar u_0$ in $H^1(\R^n)$, we obtain by the argument given in the proof of Proposition \ref{prop_result} that $\bar u_0$ is a non-trivial weak solution of $L \bar u_0=\Gamma(x) |\bar u_0|^{p-1}\bar u_0$ in $\R^n$. Finally, as seen before in the proof of Proposition~\ref{prop_result}, one obtains 
\begin{align*}
c = \lim_{k\to\infty} J[u_k] =& \lim_{k\to\infty} \left(J[u_k]-\frac{1}{p+1}J'[u_k]u_k\right) \\
=& \lim_{k\to\infty} \left(J[\bar u_k]-\frac{1}{p+1}J'[\bar u_k]\bar u_k\right) \\
=& \lim_{k\to\infty} \left(\frac{1}{2}-\frac{1}{p+1}\right)\int_{\R^n}
  \left(|\nabla \bar u_k|^2+V(x)\bar u_k^2\right)\,dx \\
\geq & \left(\frac{1}{2}-\frac{1}{p+1}\right)\int_{\R^n}
  \left(|\nabla \bar u_0|^2+V(x)\bar u_0^2\right)\,dx\\
=& J[\bar u_0].
\end{align*}
Since $\bar u_0$ is non-trivial, it belongs to the Nehari manifold $N$. Thus, equality holds in the last inequality and $\bar u_0$ is a strong ground state.\qed

\begin{lemma} With the notation of the proof of Theorem~\ref{main}
$$
\liminf_{k\in\N}\sup_{\xi\in S_{2\delta}} \left(\int_{B_R(\xi)} |u_k|^2\,dx\right)>0.
\leqno \eqref{true}
$$\
\label{neues_lemma}
\end{lemma}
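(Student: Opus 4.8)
The plan is to turn the hypothesis on the $H^1$-norm over the strip into a lower bound on the $L^{p+1}$-mass over a bounded strip, and then invoke the contrapositive of the vanishing lemma (Lemma~\ref{cc}). Recall that at this point in the proof of Theorem~\ref{main} we already know, from Proposition~\ref{prop_result} together with the standing assumption $c<\min\{c_1,c_2\}$, that $c_0:=\liminf_{k}\|u_k\|_{H^1(S_\delta)}>0$; moreover, by Lemma~\ref{minimizing_sequence} the sequence is bounded in $H^1(\R^n)$, say $\|u_k\|_{L^2(\R^n)}\le M$, and $J'[u_k]\to0$ in $H^{-1}(\R^n)$. The strategy is \emph{direct} rather than by contradiction: first produce $\liminf_k\|u_k\|_{L^{p+1}(S_{\delta+\ell})}>0$ on a suitable bounded strip, then read off the $L^2$-ball-concentration from Lemma~\ref{cc}.

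The heart of the argument is a localized testing identity. I would take a smooth cut-off $\eta$ depending only on $x_1$, with $\eta\equiv1$ on $S_\delta$, $\supp\eta\subset S_{\delta+\ell}$ for some $\ell>0$ to be chosen, $0\le\eta\le1$ and $|\nabla\eta|\le C/\ell$. Testing the relation $J'[u_k]\to0$ against the bounded sequence $\eta^2u_k$ and regrouping the gradient terms through $\int|\nabla(\eta u_k)|^2+V\eta^2u_k^2 = \int\eta^2|\nabla u_k|^2+2\eta u_k\nabla\eta\cdot\nabla u_k+V\eta^2u_k^2+\int|\nabla\eta|^2u_k^2$ yields
\[
\int_{\R^n}\Gamma\,\eta^2|u_k|^{p+1}\,dx \;=\; |||\eta u_k|||^2 \;-\; \int_{\R^n}|\nabla\eta|^2 u_k^2\,dx \;+\; o(1)\qquad(k\to\infty).
\]
Since $\mu_0:=\min\sigma(L)>0$, the norm $|||\cdot|||$ is equivalent to the $H^1$-norm, so the first term on the right is bounded below by a fixed multiple $\gamma$ of $\|u_k\|_{H^1(S_\delta)}^2$ (because $\eta\equiv1$ and $\nabla\eta\equiv0$ on $S_\delta$), hence by $\tfrac12\gamma c_0^2$ for large $k$. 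The boundary term is controlled purely by the uniform $L^2$-bound, $\int_{\R^n}|\nabla\eta|^2u_k^2\,dx\le (C/\ell)^2M^2$, \emph{uniformly in $k$}. Choosing $\ell$ large enough that $(C/\ell)^2M^2<\tfrac14\gamma c_0^2$ gives $\liminf_k\int_{\R^n}\Gamma\,\eta^2|u_k|^{p+1}\,dx\ge\tfrac14\gamma c_0^2>0$, and since $\Gamma$ is bounded and $\supp\eta\subset S_{\delta+\ell}$ this forces $\liminf_k\|u_k\|_{L^{p+1}(S_{\delta+\ell})}>0$.

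Finally I would apply Lemma~\ref{cc} on the strip $S_{\delta+\ell}$ (with $s_0=2$, $r<\delta+\ell$) in contrapositive form: if the $L^2$-ball-concentration $\sup_{\xi\in S_{\delta+\ell}}\int_{B_r(\xi)}u_k^2\,dx$ had vanishing $\liminf$, then along a subsequence $u_k\to0$ in $L^{p+1}(S_{\delta+\ell})$, contradicting the previous paragraph; hence $\liminf_k\sup_{\xi\in S_{\delta+\ell}}\int_{B_r(\xi)}u_k^2\,dx>0$, which is \eqref{true} with $S_{\delta+\ell}$ in place of $S_{2\delta}$. Since in the outer proof $\delta$ may be fixed as large as we like (we have $\liminf_k\|u_k\|_{H^1(S_{\delta'})}>0$ for \emph{every} $\delta'>0$, and $c_0(\delta)$ is nondecreasing and bounded below), one can arrange $\ell\le\delta$ while still dominating the boundary term, so that $S_{\delta+\ell}\subseteq S_{2\delta}$ and the statement holds verbatim. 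The genuinely delicate point is exactly the boundary term $\int|\nabla\eta|^2u_k^2$: because $S_{\delta+\ell}$ has infinite measure, the low-frequency $L^2$-mass there is \emph{not} controlled by the vanishing $L^s$-norms ($s>2$), so it cannot be absorbed by concentration-compactness alone. It is the coercivity $\min\sigma(L)>0$ together with the global $L^2$-bound and a sufficiently wide cut-off transition that tames it, and getting this balance right is the crux of the proof.
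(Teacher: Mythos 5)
Your proof is correct, but it takes a genuinely different route from the paper's. The paper argues by contradiction: assuming \eqref{true} fails, Lemma~\ref{cc} (with $a=2\delta$) gives $\|u_k\|_{L^s(S_{2\delta})}\to 0$ for all $s\in(2,2^\ast)$ along a subsequence, and the main work is to upgrade this to $\|u_k\|_{H^1(S_\delta)}\to 0$, contradicting Proposition~\ref{prop_result}. That upgrade is exactly the difficulty you identify (vanishing $L^s$-norms with $s>2$ do not control the $L^2$-mass on an infinite-measure strip), and the paper handles it with elliptic machinery: it writes $u_k=\theta_k+\zeta_k$ where $L\zeta_k=J'[u_k]$ and $\|\zeta_k\|_{H^1(\R^n)}\to0$, so that $L\theta_k=\Gamma(x)|u_k|^{p-1}u_k$; then Lemma~A1 gives uniform $W^{2,q}$-bounds on $\theta_k$, H\"older interpolation between a vanishing $L^t$-norm and a bounded $L^{t'}$-norm ($t>2>t'$) kills the $L^2$-mass on $S_{2\delta}$, and testing the equation with $\xi(x_1)\theta_k$ kills the gradient on $S_\delta$. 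You avoid all of this by running the argument forwards: testing $J'[u_k]\to0$ with $\eta^2u_k$ converts the known lower bound $\liminf_k\|u_k\|_{H^1(S_\delta)}>0$ (via coercivity from $\min\sigma(L)>0$) into a lower bound on $\int_{S_{\delta+\ell}}|u_k|^{p+1}\,dx$, the troublesome cut-off term $\int|\nabla\eta|^2u_k^2\,dx$ being tamed by the global $L^2$-bound and a wide transition region $\ell$; then the contrapositive of Lemma~\ref{cc} yields the ball concentration. This is more elementary (no Lemma~A1, no $W^{2,q}$ estimates, no exponent juggling), and your cut-off identity and the coercivity step are both sound. The price is the widening of the strip: your argument proves \eqref{true} with $S_{\delta+\ell}$ in place of $S_{2\delta}$, hence the literal statement only for $\delta$ large enough that $\ell\le\delta$ can be arranged, whereas the paper's proof works for every fixed $\delta>0$. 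As you correctly note, this is harmless here: in the proof of Theorem~\ref{main} the value of $\delta$ is at our disposal, and downstream the centers $\xi_k$ are in any case re-shifted to lattice points with the radius enlarged to some $\rho\ge R$, so nothing requires $\delta$ or $R$ to be small. Still, you should state explicitly that you are proving a (sufficient) variant of the lemma rather than the statement for arbitrary fixed $\delta$.
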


\begin{proof}
Otherwise, by concentration-compactness Lemma~\ref{cc} with $a=2\delta$ we find a subsequence such that $\|u_k\|_{L^s(S_{2\delta})}\to 0$ as $k\to \infty$ for all $s\in (2,2^\ast)$. This is impossible as the following argument shows. Since $J'[u_k]\to 0$ in $(H^1(\R^n))^\ast$ and $0\notin \sigma(L)$, there exists a sequence $(\zeta_k)_{k\in \N}$ in $H^1(\R^n)$ such that $L\zeta_k = J'[u_k]$ in $\R^n$ and $\|\zeta_k\|_{H^1(\R^n)}\to 0$ as $k\to \infty$. In particular $\theta_k := u_k-\zeta_k$ is a weak solution of 
\beq
L\theta_k = \Gamma(x) |u_k|^{p-1}u_k \mbox{ in } \R^n
\label{eq_zk}
\eeq
and
\beq 
\|u_k\|_{L^s(S_{2\delta})}, \|\theta_k\|_{L^s(S_{2\delta})} \to 0 \mbox{ as } k \to \infty \mbox{ for all } s\in (2,2^\ast).
\label{ls_norm}
\eeq 
As $0\not \in \sigma(L)$, we may use the fact $L^{-1}: L^q(\R^n)\to W^{2,q}(\R^n)$ is a bounded linear operator for all $q\in (1,\infty)$. This fact may be well known; for the reader's convenience we have given the details in the appendix, cf. Lemma A1. Thus, due to the boundedness of $(u_k)_{k\in \N}$ in $H^1(\R^n)$, 
\beq
\|\theta_k\|_{W^{2,q}(\R^n)} \leq \const \|u_k^p\|_{L^q(\R^n)} = O(1) \mbox{ if } q\in \Big[\max\big\{1,\frac{2}{p}\big\},\frac{2^\ast}{p}\Big).
\label{w2q_norm}
\eeq
Because $1<p<2^\ast-1$, we can choose $2<t<2^\ast$, $\frac{2n}{n+2}<t'<2$ with $\frac{1}{t}+\frac{1}{t'}=1$ such that $t'$ lies in the range given in \eqref{w2q_norm}. Therefore
\beq
\int_{S_{2\delta}} \theta_k^2\,dx \leq \|\theta_k\|_{L^t(S_{2\delta})}\|\theta_k\|_{L^{t'}(S_{2\delta})} \leq \|\theta_k\|_{L^t(S_{2\delta})}\|\theta_k\|_{W^{2,t'}(\R^n)}=o(1) \mbox{ as } k\to \infty
\label{l2_norm}
\eeq
because of \eqref{ls_norm} and \eqref{w2q_norm}. Define a $C^\infty_0$-function $\xi:\R\to\R$ with support in $[-2\delta,2\delta]$ with $0\leq \xi\leq 1$ and $\xi|_{[-\delta,\delta]}=1$ and use the test-function $\xi(x_1)\theta_k$ in \eqref{eq_zk}. This leads to 
\begin{align}
\int_{S_\delta} |\nabla & \theta_k|^2\,dx \leq \int_{S_{2\delta}} |\nabla \theta_k|^2\xi \,dx \nonumber\\
 &= \int_{S_{2\delta}} \Gamma(x) |u_k|^{p-1}u_k \theta_k\xi- V(x)\theta_k^2\xi - \theta_k \frac{\partial \theta_k}{\partial x_1}\xi'\,dx \label{crucial}\\
 &\leq \|\Gamma\|_\infty \|u_k^p\|_{L^s(S_{2\delta})} \|\theta_k\|_{L^{s'}(S_{2\delta})}+\|V\|_\infty \|\theta_k\|^2_{L^2(S_{2\delta})} + \|\xi'\|_{L^\infty} \| \frac{\partial \theta_k}{\partial x_1}\|_{L^r(S_{2\delta})} \|\theta_k\|_{L^{r'}(S_{2\delta})} \nonumber
 \end{align}
where $\frac{1}{s}+\frac{1}{s'}=1$, $\frac{1}{r}+\frac{1}{r'}=1$. Since $1<p<2^\ast-1$, we may arrange that $ps,s'\in (2,2^\ast)$. Furthermore we can choose $r$ in the range given in \eqref{w2q_norm} and additionally $r'\in (2,2^\ast)$. Estimating $\|\theta_k\|_{L^{s'}(S_{2\delta})}\leq C\|\theta_k\|_{H^1(\R^n)}\leq C\big(\|u_k\|_{H^1(\R^n)}+ \|\zeta_k\|_{H^1(\R^n)}\big)=O(1)$ and $\|\frac{\partial \theta_k}{\partial x_1}\|_{L^r(S_{2\delta})}\leq \|\theta_k\|_{W^{2,r}(\R^n)} = O(1)$ by \eqref{w2q_norm} and using \eqref{ls_norm}, \eqref{l2_norm}, we deduce from \eqref{crucial} that 
$\|\theta_k\|_{H^1(S_\delta)}\to 0$ as $k\to\infty$, which together with $u_k=\theta_k+\zeta_k$ and $\|\zeta_k\|_{H^1(\R^n)}\to 0$ yields $\|u_k\|_{H^1(S_\delta)}\to 0$ as $k\to\infty$ in contradiction to Proposition~\ref{prop_result}. Hence we now know that \eqref{true} holds. 
\end{proof}

\subsection{Proof of Theorem \ref{non_ex}} \label{sec:non_ex}

First we prove $c=c_1$ by showing the two inequalities $c\leq c_1$, $c\geq c_1$. 
The first inequality follows from the next lemma and holds always (independently of the ordering of $V_1, V_2$ and $\Gamma_1, \Gamma_2$ assumed in Theorem \ref{non_ex}).

\smallskip

\blem\label{L:c_less_c12}
Assume (H1)--(H3) and $0<\min \sigma(L)$. Then $c\leq \min\{c_1,c_2\}.$
\elem

\begin{proof}
Let $w_1$ be a ground state for the purely periodic problem with coefficients $V_1$, $\Gamma_1$ and define $u_t(x) := w_1(x-te_1), t\in \N$. Then (setting $\eta = \frac{1}{2}-\frac{1}{p+1}$) we compute
\begin{align*}
\int_{\R^n} |\nabla u_t|^2+V(x) |u_t|^{p+1}\,dx &= \int_{\R^n} \left(|\nabla u_t|^2+V_1(x)u_t^2\right)\,dx + \int_{\R^n_-} \left(V_2(x)-V_1(x)\right)u_t^2 \,dx\\
&= \frac{c_1}{\eta} + o(1)
\end{align*}
and
$$
\int_{\R^n} \Gamma(x) |u_t|^{p+1}\,dx = \int_{\R^n} \Gamma_1(x) |u_t|^{p+1}\,dx + \int_{\R^n_-} \left(\Gamma_2(x)-\Gamma_1(x)\right)|u_t|^{p+1} \,dx = \frac{c_1}{\eta} + o(1)
$$
since the integrals over $\R^n_-$ converge to $0$ as $t\to \infty$. Note that $c_1=\eta\int_{\R^n} (|\nabla u_t|^2+V_1(x)u_t^2)\,dx>0$ because $0<\min\sigma(L_1)$. Thus for large $t$ we find $\int_{\R^n} |\Gamma(x) |u_t|^{p+1}\,dx>0$ and hence we can determine $s\in \R$ such that $ su_t\in N$, i.e.,
$$
s^{p-1} =  \frac{\int_{\R^n} |\nabla u_t|^2+ V(x) u_t^2\,dx}{\int_{\R^n} \Gamma(x) |u_t|^{p+1}\,dx}.
$$
Thus
\begin{align*}
J[su_t] = & \eta s^2 \int_{\R^n} |\nabla u_t|^2+ V(x) u_t^2\,dx = \eta \frac{\left(\int_{\R^n} |\nabla u_t|^2+ V(x) u_t^2\,dx\right)^\frac{p+1}{p-1}}{\left(\int_{\R^n} \Gamma(x) |u_t|^{p+1}\,dx \right )^\frac{2}{p-1}} \\
= & \eta \frac{\left(c_1/\eta+ o(1)\right)^\frac{p+1}{p-1}}{\left( c_1/\eta+o(1)\right )^\frac{2}{p-1}}\to c_1 \mbox{ as } t \to \infty.
\end{align*}
This shows that $c\leq c_1$. Similarly, if $w_2$ is a ground state of the purely periodic problem with coefficients $V_2$, $\Gamma_2$, we can define $u_t(x) := w_2(x+te_1)$ with $t\in \N$. Letting $t$ tend to $\infty$, we can see as above that $c\leq c_2$. 
\end{proof}

\smallskip

\noindent
Next we prove that under the assumptions of Theorem~\ref{non_ex} one has $c_1\leq c$. Let $u\in N$. Then 
$$
\int_{\R^n} \Gamma_1(x) |u|^{p+1}\,dx \geq \int_{\R^n}\Gamma(x) |u|^{p+1} \,dx = \int_{\R^n} |\nabla u|^2+V(x) u^2\,dx >0
$$
and therefore we can determine $\tau\in \R$ such that $ \tau u\in N_1$, i.e.,
\begin{align*}
\tau^{p-1} = & \frac{\int_{\R^n} |\nabla u|^2+ V_1(x) u^2\,dx}{\int_{\R^n} \Gamma_1(x) |u|^{p+1}\,dx} \\
 = & \frac{\int_{\R^n} \Gamma(x)|u|^{p+1} + \bigr(V_1(x)-V(x)\bigl) u^2\,dx}{\int_{\R^n} \Gamma_1(x) |u|^{p+1}\,dx} \\
 \leq & 1 
\end{align*}
since $\Gamma\leq \Gamma_1$ and $V_1\leq V$ in $\R^n$. Therefore 
\begin{align*} 
c_1 \leq J_1[\tau u] =  & \eta \tau^2\int_{\R^n} |\nabla u|^2 + V_1(x) u^2\,dx \\
 \leq & \eta \int_{\R^n} |\nabla u|^2+ V(x) u^2 \,dx \\
 = & J[u] \mbox{ since } u \in N.
\end{align*}
Since $u\in N$ is arbitrary, we see that $c_1\leq c$. Now suppose for contradiction that a minimizer $\bar u_0\in N$ of the functional $J$ exists. Then the value $\tau$ s.t. $\tau\bar u_0 \in N_1$ in the above calculation is strictly less than $1$ since we may assume $\bar u_0>0$ almost everywhere on $\R^n$ (cf. Remark~\ref{positivity} and Lemma A2) and also $\Gamma< \Gamma_1$ and/or $V_1<V$ on a set of nonzero measure. However, $\tau<1$ implies 
$c_1\leq J_1[\tau \bar u_0]<J[\bar u_0]=c$, which contradicts Lemma~\ref{L:c_less_c12}. This shows that no strong ground state of \eqref{int_nls} can exist and the proof of Theorem \ref{non_ex} is thus finished. \hfill\qed

\bigskip

We explain next the statement of Remark~\ref{rem:curved}. The distance of $q_j$ to $\Omega_2$ diverges to $\infty$ as $j\rightarrow \infty$ and we can thus use the same argument as in the proof of Theorem \ref{non_ex} with $u_t(x):=w_1(x-q_j), j\gg 1$ and with $\R_-^n$ replaced by $\Omega_2$, cf. Figure~\ref{F:curved_interf_ray}.
\begin{figure}[!ht]
  \begin{center}
    \scalebox{0.28}{\includegraphics{./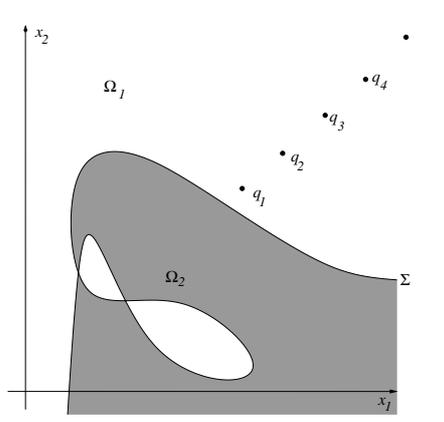}}
  \end{center}
  \caption{An example of the sequence of points $q_j$.}
  \label{F:curved_interf_ray}
\end{figure} 


\subsection{Proof of Theorem \ref{gen_princ}} \label{sec:principle}

Let us first treat the case $c_1\leq c_2$. Similarly to the approach of Arcoya, Cingolani and G\'{a}mez \cite{acg} we consider $u_t(x) := w_1(x-te_1)$ for large $t\in\N$, i.e., we shift the ground state $w_1$ far to the right. Recall from \cite{pankov} that 
\begin{equation}
|w_1(x)| \leq Ce^{-\lambda |x|} \mbox{ for appropriate $C,\lambda>0$}. \label{decay}
\end{equation}
As in the proof of Lemma~\ref{L:c_less_c12} we have $\int_{\R^n} \Gamma(x)|u_t|^{p+1}\,dx=c_1/\eta+o(1)>0$ for large $t\in \N$. Therefore we can determine $s\in \R$ such that $ su_t\in N$, i.e.,
\beq\label{E:s_formula}
s^{p-1} = \frac{\int_{\R^n} |\nabla u_t|^2+ V(x) u_t^2\,dx}{\int_{\R^n} \Gamma(x) |u_t|^{p+1}\,dx}.
\eeq
Next we compute (using again $\eta =\frac{1}{2} -\frac{1}{p+1}$)
\begin{align*}
J[su_t] = & \eta s^2 \int_{\R^n} |\nabla u_t|^2+ V(x) u_t^2\,dx = \eta \frac{\left(\int_{\R^n} |\nabla u_t|^2+ V(x) u_t^2\,dx\right)^\frac{p+1}{p-1}}{\left(\int_{\R^n} \Gamma(x) |u_t|^{p+1}\,dx \right )^\frac{2}{p-1}} \\
= & \eta \frac{\left(\int_{\R^n} |\nabla u_t|^2+ V_1(x) u_t^2\,dx + \int_{\R^n_-} (V_2(x)-V_1(x))u_t^2 \,dx\right)^\frac{p+1}{p-1}}{\left(\int_{\R^n} \Gamma_1(x) |u_t|^{p+1}\,dx+\int_{\R^n_-} (\Gamma_2(x)-\Gamma_1(x))|u_t|^{p+1}\,dx \right )^\frac{2}{p-1}} \\
= & c_1 \frac{1+ \frac{p+1}{p-1} \left[\int_{\R^n} |\nabla u_t|^2+ V_1(x) u_t^2\,dx\right]^{-1} \int_{\R^n_-} (V_2(x)-V_1(x))u_t^2\,dx(1+o(1))}{1+ \frac{2}{p-1}\left[\int_{\R^n} \Gamma_1(x) |u_t|^{p+1}\,dx\right]^{-1}\int_{\R^n_-} (\Gamma_2(x)-\Gamma_1(x))|u_t|^{p+1}\,dx(1+o(1))},
\end{align*}
where $o(1)\rightarrow 0$ as $t\rightarrow \infty$. The last equality holds since both integrals over $\R^n_-$ converge to $0$ as $t\to \infty$ and $(1+\eps)^\alpha = 1+\alpha \eps(1+o(1))$ as $\eps\rightarrow 0$. Hence, we obtain $J[su_t]<c_1$ if 
\beq\label{E:V_Gam_cond}
(p+1) \int_{\R^n_-} (V_2(x)-V_1(x)) w_1(x-te_1)^2\,dx < 2 \int_{\R^n_-}  (\Gamma_2(x)-\Gamma_1(x)) |w_1(x-te_1)|^{p+1}\,dx
\eeq
for sufficiently large $t\in \N$. This establishes condition \eqref{E:V_cond}. Moreover, rewriting \eqref{E:V_Gam_cond} as
$$
\int_{\R^n_-} w_1(x-te_1)^2 \left( (p+1)\delta V(x)- 2\delta\Gamma(x)  |w_1(x-te_1)|^{p-1}\right)\,dx<0
$$
with $\delta V=V_2-V_1$ and $\delta\Gamma=\Gamma_2-\Gamma_1$ and using the decay \eqref{decay} of $w_1$, we see that the above condition is always satisfied for large $t\in \N$ provided $\esssup \delta V< 0$. 

\smallskip

The case $c_2\leq c_1$ is symmetric to that above. One simply needs to shift a ground state $w_2$ to the left. Hence, the proof is the same but with $w_1,V_1,\Gamma_1, c_1, t$ and $\R^n_-$ replaced by $w_2,V_2,\Gamma_2, c_2, -t$, and $\R^n_+$. \hfil\qed


\section{$n$ Dimensions: Examples}\label{S:nD_examples}

Let us state the assumptions on the coefficients once for all the examples below. Namely, we take for $V_1, V_2, \Gamma_1$  and $\Gamma_2$ bounded functions such that (H1)--(H2) are satisfied. The exponent $p$ is assumed to satisfy (H3).

\subsection{Proof of Theorem \ref{T:scaled_interf} (Left and Right States Related by Scaling)}\label{S:ex_scaling}
Due to the specific scaling of $V_1,V_2$ and $\Gamma_1,\Gamma_2$ the ground states $w_1$, $w_2$ of the purely periodic problems \eqref{per_nls} are related as follows: given a ground state $w_2$ the transformation
$$
w_1(x) = \left(\frac{k}{\gamma}\right)^\frac{2}{p-1} w_2(k x)
$$
produces a corresponding ground state $w_1$. Hence, with $\eta = \frac{1}{2}-\frac{1}{p+1}$ we find
\begin{align*}
c_1 = & \eta \int_{\R^n} |\nabla w_1|^2 + V_1(x) w_1^2 \,dx \\
 = & \eta \left(\frac{k}{\gamma}\right)^\frac{4}{p-1}k^2 \int_{\R^n} |\nabla w_2|^2(k x) + V_2(k x) w_2^2(k x) \,dx  \\
 = & \left(\frac{k}{\gamma}\right)^\frac{4}{p-1}k^{2-n} c_2.
 \end{align*}
By assumption $k^\frac{n+2-p(n-2)}{p-1}=k^{\frac{4}{p-1}+2-n}\leq \gamma^\frac{4}{p-1}$. Then $\min\{c_1,c_2\}=c_1$. Now we can achieve \eqref{E:V_cond} in Theorem~\ref{gen_princ} for large $t\in \N$ if we assume $\esssup_{[0,1]^n}(V_2-V_1)<0$. In our case, where $V_1(x) = k^2 V_2(k x)$, this is ensured by assumption \eqref{E:V_scale_ord}. \hfill \qed

\subsection{Proof of Theorem \ref{T:large_jump} (Large Enough Jump in $\Gamma$)}\label{S:large_Gam}

We start this section by a lemma, which explains that the ground-state energy of the periodic problem $L_1u=\Gamma_1|u|^{p-1}u$ depends monotonically on the coefficient $\Gamma_1$ if $\essinf \Gamma_1>0$ and if we keep $V_1$ fixed. To denote the dependence on the coefficient $\Gamma_1$, let us write $c_1(\Gamma_1)$ for the ground-state energy, $w_1(x;\Gamma_1)$ for a ground-state, $N_1(\Gamma_1)$ for the Nehari-manifold, and $J_1[u;\Gamma_1]$ for the energy-functional. 

\begin{lemma}
Assume $\Gamma_1\geq \Gamma_1^*$ with $\essinf\Gamma_1^*>0$. Then
$$
c_1(\Gamma_1) \leq c_1(\Gamma^*_1).
$$
\label{mono}
\end{lemma}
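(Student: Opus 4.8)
The plan is to use the variational characterization of the ground-state energy as an infimum over the Nehari manifold, together with the observation that increasing the nonlinear coefficient $\Gamma_1$ makes it ``easier'' to lie on the Nehari manifold and lowers the energy. The key point is that $c_1(\Gamma_1)$ can be written not only as an infimum over $N_1(\Gamma_1)$ but, via the standard scaling on rays, as an infimum over all of $H^1(\R^n)\setminus\{0\}$ of the scaled energy. Concretely, since $\essinf\Gamma_1>0$ (and likewise for $\Gamma_1^*$), hypothesis (H2') holds, so for every $u\in H^1(\R^n)\setminus\{0\}$ there is a unique $t=t(u)>0$ with $t u\in N_1(\Gamma_1)$, and one computes in closed form
\[
J_1[t(u)u;\Gamma_1] = \eta\,\frac{\left(\int_{\R^n} |\nabla u|^2 + V_1(x)u^2\,dx\right)^{\frac{p+1}{p-1}}}{\left(\int_{\R^n}\Gamma_1(x)|u|^{p+1}\,dx\right)^{\frac{2}{p-1}}},
\]
with $\eta=\tfrac12-\tfrac1{p+1}$. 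Hence $c_1(\Gamma_1)=\inf_{u\neq 0} J_1[t(u)u;\Gamma_1]$ is given by this quotient infimum.

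First I would record this quotient formula for both $\Gamma_1$ and $\Gamma_1^*$. The crucial monotonicity is then immediate from the formula: the numerator $\int_{\R^n}|\nabla u|^2+V_1(x)u^2\,dx$ does not involve $\Gamma_1$ at all, while the denominator $\int_{\R^n}\Gamma_1(x)|u|^{p+1}\,dx$ is pointwise nondecreasing in $\Gamma_1$ because $|u|^{p+1}\geq 0$. Since the exponent $\tfrac{2}{p-1}$ on the denominator is positive, enlarging the denominator strictly decreases (or at least does not increase) the quotient. Therefore $\Gamma_1\geq\Gamma_1^*$ forces
\[
J_1[t(u)u;\Gamma_1]\leq J_1[t^*(u)u;\Gamma_1^*]\quad\text{for every }u\in H^1(\R^n)\setminus\{0\}.
\]
Taking the infimum over $u$ on both sides yields $c_1(\Gamma_1)\leq c_1(\Gamma_1^*)$, which is the claim.

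Alternatively, and perhaps cleaner to write, I would avoid the global parametrization and argue directly: let $w_1^*=w_1(\cdot;\Gamma_1^*)$ be a ground state for $\Gamma_1^*$, so $w_1^*\in N_1(\Gamma_1^*)$ and $J_1[w_1^*;\Gamma_1^*]=c_1(\Gamma_1^*)$. Because $\essinf\Gamma_1>0$, there is a unique $s>0$ with $s\,w_1^*\in N_1(\Gamma_1)$; the defining relation $G_1[s w_1^*;\Gamma_1]=0$ together with $G_1[w_1^*;\Gamma_1^*]=0$ and $\Gamma_1\geq\Gamma_1^*$ gives $s\leq 1$ after a short computation, and then $J_1[s w_1^*;\Gamma_1]=\eta s^2\||w_1^*\||^2\leq \eta\||w_1^*\||^2 = J_1[w_1^*;\Gamma_1^*]=c_1(\Gamma_1^*)$, so $c_1(\Gamma_1)\leq J_1[s w_1^*;\Gamma_1]\leq c_1(\Gamma_1^*)$. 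This mirrors the $\tau\leq 1$ argument already used in the proof of Theorem~\ref{non_ex}.

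The main obstacle is not the inequality itself but the well-posedness of the objects involved: one must ensure that ground states $w_1(\cdot;\Gamma_1)$ and $w_1(\cdot;\Gamma_1^*)$ actually exist and that the projection onto the Nehari manifold is well-defined, i.e. that $t(u)$ and $s$ exist and are unique. This is exactly where the stronger hypothesis (H2') — hidden in the assumption $\essinf\Gamma_1^*>0$ (and the standing assumption that $\Gamma_1\geq\Gamma_1^*$, hence $\essinf\Gamma_1>0$) — is needed, since under (H2') the Nehari manifold is a topological sphere and every nonzero direction meets it in a unique positive multiple. Existence of the minimizers under $0<\min\sigma(L_1)$ follows from Pankov's result as quoted in the introduction. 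With those structural facts in hand, the estimate is a one-line consequence of the quotient formula.
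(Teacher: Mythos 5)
Your proposal is correct and, in substance, is the paper's own proof: your ``alternative'' argument (rescale the ground state $w_1(\cdot;\Gamma_1^*)$ by the unique $s$ with $s\,w_1(\cdot;\Gamma_1^*)\in N_1(\Gamma_1)$, observe $s\leq 1$ from $\Gamma_1\geq\Gamma_1^*$, and compare energies) is exactly what the paper does. Your primary quotient formulation is the same scaling computation applied to arbitrary $u$ rather than to $w_1(\cdot;\Gamma_1^*)$ alone, with the minor bonus that it does not require attainment of $c_1(\Gamma_1^*)$.
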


\begin{proof}
Let us select $s$ such that $s w_1(\cdot;\Gamma_1^*)\in N_1(\Gamma_1)$, i.e.,
\[s^{p-1} = \frac{\int_{\R^n} \Gamma_1^*(x)|w_1(x;\Gamma_1^*)|^{p+1} dx}{\int_{\R^n} \Gamma_1(x)|w_1(x;\Gamma_1^*)|^{p+1} dx}.\]
Clearly, $s \leq 1$ and thus we get
\begin{eqnarray*}
c_1(\Gamma_1)&\leq & J_1[s w_1(\cdot;\Gamma_1^*);\Gamma_1]\\
& = & s^2 \eta \int_{\R^n} |\nabla w_1(x;\Gamma_1^*)|^2+ V_1(x) (w_1(x;\Gamma_1^*)^2 dx  \\
& = & s^2 J_1[w_1(\cdot;\Gamma_1^*);\Gamma^*_1] \\
& \leq & J_1[w_1(\cdot;\Gamma_1^*);\Gamma^*_1]=c_1(\Gamma_1^*).
\end{eqnarray*}
\end{proof}

Now we can give the proof of Theorem~\ref{T:large_jump}. By assumption we have $V_1> V_2$ almost everywhere. Once we have checked $c_1(\Gamma_1)\leq c_2(\Gamma_2)$, then we can directly apply Theorem~\ref{gen_princ} to deduce the existence of a strong ground state. Using $\essinf \Gamma_1\geq \beta_0$ and applying Lemma~\ref{mono} with $\Gamma_1^*=\beta_0$, we get 
$$
c_1(\Gamma_1) \leq c_1(\beta_0)=\beta_0^\frac{-2}{p-1}c_1(1).
$$
Hence, by choosing
$$
\beta_0 = \left(\frac{c_2(\Gamma_2)}{c_1(1)}\right)^\frac{1-p}{2},
$$
we obtain $c_1(\Gamma_1) \leq c_2(\Gamma_2)$. This finishes the proof of Theorem~\ref{T:large_jump}. \hfill \qed

\section{One Dimension: General Existence Result (Proof of Theorem \ref{T:Bloch_cond})} \label{S:one-d}

In the case of one dimension we introduce a spectral parameter $\lambda \in \R$ into the problem, i.e., we consider the differential operator
\begin{equation}
L_\lambda := -\frac{d^2}{dx^2} + V-\lambda \mbox{  on  } \cD(L_\lambda)=H^2(\R)\subset L^2(\R)
\label{E:L_1D}
\end{equation}
and look for strong ground states of 
\begin{equation}
L_\lambda u = \Gamma(x) |u|^{p-1}u \mbox{ in } \R.
\label{int_nls_1d}
\end{equation}
The functions $V$ and $\Gamma$ are defined via the bounded periodic functions $V_1, V_2, \Gamma_1, \Gamma_2$ as before.

\medskip

The statement of Theorem \ref{T:Bloch_cond} uses Bloch modes of the linear equation
\begin{equation}
- u'' + \tilde V(x) u = 0 \mbox{ in } \R
\label{linear_hom}
\end{equation}
with a $1$-periodic, bounded function $\tilde V$. We define them next. If we assume that $0<\min\sigma(-\frac{d^2}{dx^2}+ \tilde V(x))$, then \eqref{linear_hom} has two linearly independent solutions (Bloch modes) of the form
$$
u_\pm(x) = p_\pm(x) e^{\mp \kappa x}
$$
for a suitable value $\kappa>0$ and $1$-periodic, positive functions  $p_\pm$. We use the normalization $\|p_\pm\|_\infty =1$. 

\medskip

We summarize next the structure of the proof of Theorem~\ref{T:Bloch_cond}. According to Theorem~\ref{gen_princ}, in the case $c_1\leq c_2$, we have to check the condition
\beq\label{E:1d_cond}
(p+1) \int_{-\infty}^0 \delta V(x) w_1(x-t)^2\,dx < 2 \int_{-\infty}^0  \delta \Gamma(x) |w_1(x-t)|^{p+1}\,dx \quad \text{for} \ t\gg 1,
\eeq
where $\delta V(x)= V_2(x)-V_1(x)$ and $\delta \Gamma(x)= \Gamma_2(x)-\Gamma_1(x)$. We first show in Lemma~\ref{nonlinear}, via a comparison principle, that for $\pm x \rightarrow \infty$ ground states $w_1$ behaves like the Bloch mode $u_\pm$ of \eqref{E:L_1D}. Then in Lemma~\ref{linear_lemma} we compute the asymptotic behavior of the two sides of the inequality \eqref{E:1d_cond} as $t\to \infty$. Since the left hand side behaves like $e^{-2\kappa t}$ whereas the right-hand side behaves like $e^{-(p+1)\kappa t}$, the verification of \eqref{E:1d_cond} may be reduced to 
$$
\int_{-\infty}^0 \delta V(x) w_1(x-t)^2\,dx<0 \mbox{ for } t\gg 1.
$$
In fact, Lemma~\ref{linear_lemma} provides an asymptotic formula for the left hand side of \eqref{E:1d_cond} where $w_1(x-t)$ is replaced by the Bloch mode $u_-(x-t)=p_-(x-t)e^{\kappa(x-t)}$ and, using a geometric series, the integral over the interval $(-\infty,0)$ is replaced by a single period $(-1,0)$. As a result,  \eqref{E:1d_cond} is equivalent to \eqref{gen_cond1}. This finishes the proof of part (a) of Theorem~\ref{T:Bloch_cond}.

In order to prove part (b) of Theorem~\ref{T:Bloch_cond} for $\lambda \ll -1$, we show in Lemma~\ref{lambda_est}-\ref{asymptotic} that $\kappa-\sqrt{|\lambda|}=O(1/\sqrt{|\lambda|})$ and that the periodic part $p_-$ of the Bloch mode $u_-$ converges uniformly to $1$ as $\lambda \rightarrow -\infty$. As a result, for $\lambda \ll -1$ the sign of the integral in \eqref{gen_cond1} is dominated by the local behavior of $V_2(x)-V_1(x)$ near $x=0$ as detailed in Lemma~\ref{L:integ_asympt}.

\medskip

We begin our analysis with the following version of the comparison principle.

\begin{lemma}(Comparison principle) Assume that $\tilde V:\R\to\R$ is bounded, 1-periodic such that $0<\min\sigma(-\frac{d^2}{dx^2}+ \tilde V(x))$. Let $p_\pm e^{\mp\kappa x}$ be the Bloch modes for the operator $-\frac{d^2}{dx^2}+ \tilde V(x)$ and set $P_\pm := \frac{\sup_{[0,1]} p_\pm}{\inf_{[0,1]} p_\pm}$.
\begin{itemize}
\item[(a)] Let $u>0$, $u\in H^1(\R)$ solve 
$$
- u'' + \tilde V(x) u \leq 0 \mbox{ for } |x|\geq |x_0|>0
$$
for some fixed $x_0 \in \R$. If we set $P:=\max\{P_+, P_-\}$, then  
$$
0 < u(x) \leq P e^{-\kappa(|x|-|x_0|)} \max_I u \quad\mbox{ for all } x \in \R,
$$
where $I=[-|x_0|,|x_0|]$.
\item[(b)] Let $u>0$, $u\in H^1(\R)$ solve
$$
- u'' + \tilde V(x) u \geq 0 \mbox{ for } |x|\geq |x_0|>0
$$
for some fixed $x_0 \in \R$. If we set $Q:=\min\{\frac{1}{P_+}, \frac{1}{P_-}\}$, then  
$$
u(x) \geq Q e^{-\kappa(|x|-|x_0|)} \min_I u \quad\mbox{ for all } x \in \R.
$$
\end{itemize}
\label{comparison}
\end{lemma}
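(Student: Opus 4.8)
The plan is to prove both parts by comparing $u$ on each exterior half-line with a suitable multiple of the decaying Bloch mode, using a weak maximum principle that survives the possible sign changes of $\tilde V$ thanks to the spectral assumption $0<\min\sigma(-\frac{d^2}{dx^2}+\tilde V)$. Write $a:=|x_0|$ and recall that $u_\pm(x)=p_\pm(x)e^{\mp\kappa x}$ are exact, strictly positive solutions of $-u''+\tilde V(x)u=0$; since $u\in H^1(\R)\hookrightarrow C^0(\R)$ with $u>0$ and $u(x)\to0$ as $|x|\to\infty$, all the pointwise quantities $u(a)$, $\max_I u$, $\min_I u$ are well defined.

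The core ingredient I would isolate first is the following half-line maximum principle: if $w\in H^1([a,\infty))$ satisfies $-w''+\tilde V w\ge0$ weakly, $w(a)\ge0$ and $w(x)\to0$ as $x\to\infty$, then $w\ge0$ on $[a,\infty)$. To prove it, extend the negative part $\phi:=w^-$ by $0$ to all of $\R$; because $w(a)\ge0$ we have $\phi(a)=0$, so $\phi\in H^1(\R)$ and $\phi$ is an admissible nonnegative test function. Testing the differential inequality against $\phi$ and using the standard identities $(w^-)'=-w'\mathbf 1_{\{w<0\}}$ and $w\,w^-=-(w^-)^2$ gives
\[
0\le\int_a^\infty\big(w'(w^-)'+\tilde V\,w\,w^-\big)\,dx=-\int_\R\big((\phi')^2+\tilde V\phi^2\big)\,dx=-Q(\phi),
\]
where $Q$ is the quadratic form of the operator and the boundary contribution at $+\infty$ vanishes by integrability. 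Since $0<\min\sigma$ forces $Q(\phi)\ge\min\sigma\,\|\phi\|_{L^2}^2>0$ for every $\phi\ne0$, we conclude $\phi\equiv0$, i.e. $w\ge0$.

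For part (a), I would set $w:=C u_+-u$ on $[a,\infty)$; as $u_+$ solves the equation and $u$ is a subsolution, $w$ is a supersolution with $w\to0$, and choosing $C:=e^{\kappa a}\max_I u/\inf_{[0,1]}p_+$ makes $Cu_+(a)=C p_+(a)e^{-\kappa a}\ge\max_I u\ge u(a)$, hence $w(a)\ge0$. The maximum principle then yields $u\le Cu_+$ on $[a,\infty)$, and substituting $C$ and bounding $p_+(x)/\inf_{[0,1]}p_+\le P_+\le P$ produces $u(x)\le P e^{-\kappa(x-a)}\max_I u$. The mirror argument on $(-\infty,-a]$ with $u_-$ gives the same estimate in terms of $|x|$, and on $I$ the inequality is immediate since there $e^{-\kappa(|x|-a)}\ge1$ and $P\ge1$. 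Part (b) is symmetric: $u$ is now a supersolution on the exterior, so I compare from below with $c u_\pm$, choosing $c$ so that $c u_+(a)\le\min_I u\le u(a)$, apply the maximum principle to $u-cu_+$, and use $p_+(x)/p_+(a)\ge 1/P_+\ge Q$ to obtain the lower bound $u(x)\ge Q e^{-\kappa(|x|-a)}\min_I u$ in the exterior.

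The main obstacle is precisely the maximum principle in the second paragraph: because $\tilde V$ may be negative there is no classical comparison principle, and the usual trick of dividing by a positive solution ($w=u_+\psi$, monotonicity of $u_+^2\psi'$) runs into delicate boundary behaviour at infinity where both $w$ and $u_+$ vanish. The energy/testing argument above sidesteps this entirely by converting the inequality into the sign-definiteness of $Q$, which is exactly what the hypothesis $0<\min\sigma$ provides; once this is in place, both estimates follow by elementary bookkeeping with the constants $P_\pm$.
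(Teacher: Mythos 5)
Your proof is correct and follows essentially the same route as the paper: on each exterior half-line you compare $u$ with a suitably scaled Bloch mode and annihilate the positive/negative part of the difference by testing the differential inequality against it, using the positivity of the quadratic form guaranteed by $0<\min\sigma$ (your supersolution $w=Cu_+-u$ tested with $w^-$ is literally the paper's subsolution $\psi=u-su_+$ tested with $\psi^+$, with the same choice of constant). Your restriction of the lower bound in part (b) to the exterior $|x|\geq|x_0|$ is in fact the correct reading of the statement --- the claimed inequality cannot hold inside $I$ in general (consider $\tilde V\equiv 1$ and $u=\min\{1,e^{-(|x|-|x_0|)}\}$) --- and the exterior estimate is also all that the paper's own proof establishes and all that is used later.
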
 

\begin{proof} The proof is elementary and may be well known. We give the details for the reader's convenience. Let us concentrate on the case (a) and the estimate on the interval $[x_0,\infty)$ and suppose that $x_0\geq 0$. The estimate for the interval $(-\infty,-x_0]$ is similar. Due to the assumption $0<\min\sigma(-\frac{d^2}{dx^2}+ \tilde V(x))$ we have the positivity of the quadratic form, i.e.,
\begin{equation}
\int_\R {\phi'}^2+ \tilde V(x) \phi^2 \,dx \geq 0 \mbox{ for all } \phi \in H^1(\R).
\label{pos_quad}
\end{equation}
Let $\psi := u-su_+$ with $u_+(x) = p_+(x)e^{-\kappa x}$ being a Bloch mode satisfying 
$$
\Big(-\frac{d^2}{dx^2}+ \tilde V(x)\Big)u_+=0 \mbox{ on } \R,
$$
and choose 
$$
s := \frac{e^{\kappa x_0}\max_I u }{\inf_{[0,1]} p_+}
$$
so that $\psi(x_0)\leq 0$. Since $\psi$ satisfies
$$
-\psi'' + \tilde V(x) \psi \leq 0 \mbox{ on } (x_0,\infty),
$$
testing with $\psi^+ := \max\{\psi,0\}$ yields
$$
\int_{x_0}^\infty ({\psi^+}')^2+ \tilde V(x)(\psi^+)^2 \,dx = \int_{x_0}^\infty \psi' {\psi^+}'+ \tilde V(x) \psi \psi^+\,dx, \leq 0
$$
which, after extending $\psi^+$ by $0$ to all of $\R$, together with the positivity of the quadratic form in \eqref{pos_quad} yields $\psi^+ \equiv 0$. This implies the claim in case (a). In case (b) on the interval $[x_0,\infty)$ one considers the function $\psi=u-su_+$ with 
$$
s := \frac{e^{\kappa x_0}\min_I u }{\sup_{[0,1]} p_+}
$$ 
and shows that $\psi^-=\max\{-\psi,0\}\equiv 0$ similarly as above.
\end{proof} 

For the next result note that the Wronskian
$$
\det\left(\begin{array}{ll}
u_+ & u_- \\
u_+' & u_-'
\end{array}
\right) =: \omega,
$$ 
constructed from the linearly independent Bloch modes $u_\pm$ of \eqref{linear_hom},
is a constant. 

\begin{lemma} Assume that $\tilde V:\R\to\R$ is bounded, 1-periodic such that $0<\min\sigma(-\frac{d^2}{dx^2}+ \tilde V(x))$ and let $p_\pm e^{\mp\kappa x}$ be the Bloch modes for the operator $-\frac{d^2}{dx^2}+ \tilde V(x)$. If $f(x) = O(e^{-\alpha |x|})$ for $|x|\to \infty$ with $\alpha>\kappa$ and if  $u$ is a solution of 
$$
- u'' + \tilde V(x) u = f(x) \mbox{ on } \R, \quad \lim_{|x|\to \infty} u(x)=0,
$$
then
$$
\lim_{x\to \pm\infty} \frac{u(x)}{u_\pm(x)} = \frac{1}{\omega}\int_{-\infty}^\infty u_\mp(s) f(s)\,ds.
$$
\label{linear_inh}
\end{lemma}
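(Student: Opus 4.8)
The plan is to represent the decaying solution $u$ explicitly through the Green's function of $-\frac{d^2}{dx^2}+\tilde V$ on $\R$ and then to extract the two limits by a direct asymptotic analysis. Recall that $u_+=p_+e^{-\kappa x}$ and $u_-=p_-e^{\kappa x}$ form a fundamental system, with $p_\pm$ positive, $1$-periodic and hence bounded above and away from zero, so that $u_+=O(e^{-\kappa x})$ decays at $+\infty$ and grows at $-\infty$, and $u_-$ does the opposite. The candidate solution that tends to $0$ at both ends is
$$
u(x)=\frac{u_+(x)}{\omega}\int_{-\infty}^x u_-(s)f(s)\,ds+\frac{u_-(x)}{\omega}\int_x^\infty u_+(s)f(s)\,ds.
$$
First I would check that both integrals converge: from $|f(s)|=O(e^{-\alpha|s|})$ together with the exponential rates of $u_\pm$, the integrand $u_-f$ is $O(e^{-(\alpha-\kappa)s})$ as $s\to+\infty$ and $O(e^{(\alpha+\kappa)s})$ as $s\to-\infty$, and symmetrically for $u_+f$; the hypothesis $\alpha>\kappa$ is precisely what secures integrability at the ``bad'' end. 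A short computation, either the jump relation for the Green's function or equivalently variation of parameters with the constant Wronskian $\omega=u_+u_-'-u_-u_+'$, confirms that this $u$ solves $-u''+\tilde V u=f$ and vanishes at $\pm\infty$.

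I would then argue that the $u$ of the statement coincides with this formula. The difference of any two solutions that both vanish at $\pm\infty$ is a homogeneous solution $a u_++b u_-$; decay at $+\infty$ forces $b=0$ since $u_-$ grows there, and decay at $-\infty$ forces $a=0$, so the decaying solution is unique and must equal the representation above.

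The two limits then follow by inspection. As $x\to+\infty$ I divide by $u_+(x)$ to obtain
$$
\frac{u(x)}{u_+(x)}=\frac{1}{\omega}\int_{-\infty}^x u_-(s)f(s)\,ds+\frac{u_-(x)}{u_+(x)}\cdot\frac{1}{\omega}\int_x^\infty u_+(s)f(s)\,ds.
$$
The first term tends to $\frac{1}{\omega}\int_{-\infty}^\infty u_-f$, the asserted value. For the second term, $u_-(x)/u_+(x)=(p_-/p_+)(x)\,e^{2\kappa x}=O(e^{2\kappa x})$ while $\int_x^\infty u_+f=O(e^{-(\alpha+\kappa)x})$, so the product is $O(e^{(\kappa-\alpha)x})\to0$, once more exactly because $\alpha>\kappa$. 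The limit as $x\to-\infty$ is symmetric: dividing by $u_-(x)$ gives the main term $\frac{1}{\omega}\int_{-\infty}^\infty u_+f$ and a cross term of size $O(e^{(\alpha-\kappa)x})\to0$.

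The main obstacle I anticipate is purely bookkeeping: fixing the sign of $\omega$ and the placement of $u_+$ versus $u_-$ in the two integrals consistently, and then verifying that the single hypothesis $\alpha>\kappa$ simultaneously guarantees convergence of the limiting integrals and the vanishing of the cross terms. No genuinely hard analysis is required once the exponential rates of $u_\pm$, controlled by the two-sided bounds on the periodic factors $p_\pm$, are in hand.
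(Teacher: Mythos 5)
Your proof is correct and follows essentially the same route as the paper: the variation-of-constants (Green's function) representation, convergence of the integrals guaranteed by $\alpha>\kappa$, and vanishing of the cross terms after dividing by $u_\pm(x)$. The only difference is that you make explicit the uniqueness step (the given decaying solution must coincide with the representation, since a homogeneous difference $au_++bu_-$ decaying at both ends forces $a=b=0$), which the paper leaves implicit.
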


\begin{proof} By the variation of constants formula for inhomogeneous problems we obtain
$$
u(x) = \frac{1}{\omega}\left(\int_{-\infty}^x u_-(s)f(s)\,ds\right)u_+(x)+ \frac{1}{\omega}\left(\int_x^\infty u_+(s)f(s)\,ds\right) u_-(x),
$$
where the boundary condition $\lim_{|x|\to \infty} u(x)=0$ is satisfied because $u_\pm(x)\rightarrow 0$ as $x\rightarrow \pm \infty$ and the integrals are bounded as functions of $x\in \R$ due to the assumption $f(x)= O(e^{-\alpha |x|})$ with $\alpha>\kappa$.
The claim of the lemma follows since again the assumption $\alpha>\kappa$ implies 
$$
\int_x^\infty u_+(s)f(s)\,ds\, \frac{u_-(x)}{u_+(x)} \to 0 \mbox{ as } x \to \infty \mbox{ and } \int_{-\infty}^x u_-(s)f(s)\,ds\, \frac{u_+(x)}{u_-(x)} \to 0 \mbox{ as } x \to -\infty. 
$$
\end{proof}

Now we can describe the behavior of ground states $w_1$ of \eqref{per_nls} for $i=1$ with $V_1$ replaced by $V_1-\lambda$.

\begin{lemma} Assume that $V_1:\R\to\R$ is bounded, 1-periodic and let $\lambda<\min\sigma(-\frac{d^2}{dx^2}+ V_1(x))$. If $\Gamma_1:\R\to\R$ is bounded and if $z>0$ is a solution (not necessarily a ground state) of 
$$
- z'' + (V_1(x)-\lambda) z = \Gamma_1(x) z^p \mbox{ in } \R, \quad \lim_{|x|\to \infty} z(x)=0,
$$
then 
$$
d_\pm(\lambda;z) := \lim_{x \to\pm\infty} \frac{z(x)}{u_\pm(x)} = \frac{1}{\omega}\int_{-\infty}^\infty u_\mp(s)\Gamma_1(s) z(s)^p\,ds,
$$
where $u_\pm$ are the Bloch modes of $-\frac{d^2}{dx^2}+ V_1(x)-\lambda$.
\label{nonlinear}
\end{lemma}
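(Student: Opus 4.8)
The plan is to recognize that, once the right-hand side $\Gamma_1(x)z(x)^p$ is treated as a prescribed inhomogeneity $f(x)$, the assertion follows immediately from the linear representation formula in Lemma~\ref{linear_inh}. Writing $\tilde V := V_1 - \lambda$, the hypothesis $\lambda < \min\sigma(-\frac{d^2}{dx^2} + V_1)$ guarantees $0 < \min\sigma(-\frac{d^2}{dx^2}+\tilde V)$, so the Bloch modes $u_\pm = p_\pm e^{\mp\kappa x}$ and their Wronskian $\omega$ are available, and $z$ solves $-z'' + \tilde V(x) z = f(x)$ with $f = \Gamma_1 z^p$ and $\lim_{|x|\to\infty} z = 0$. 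Lemma~\ref{linear_inh} then yields exactly
\[
\lim_{x\to\pm\infty}\frac{z(x)}{u_\pm(x)} = \frac{1}{\omega}\int_{-\infty}^\infty u_\mp(s)\Gamma_1(s)z(s)^p\,ds,
\]
provided its decay hypothesis $f(x) = O(e^{-\alpha|x|})$ with $\alpha > \kappa$ is met.

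Verifying this decay hypothesis is the crux of the argument and the only genuine obstacle. The difficulty is that, since $\Gamma_1$ need not be sign-definite, the equation $-z'' + \tilde V z = \Gamma_1 z^p$ does not directly have the sign needed to invoke the comparison principle (Lemma~\ref{comparison}(a)): one cannot in general assert $-z'' + \tilde V z \le 0$. To get around this, I would first use $p>1$ together with $z(x)\to 0$ to note that $\Gamma_1(x)z(x)^{p-1}\to 0$ as $|x|\to\infty$, so that for any fixed small $\eps>0$ there is $x_0>0$ with $\Gamma_1(x)z(x)^{p-1}\le\eps$ for $|x|\ge x_0$. Then $z$ satisfies $-z'' + (\tilde V(x)-\eps)z = (\Gamma_1 z^{p-1}-\eps)z \le 0$ for $|x|\ge x_0$. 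Choosing $\eps$ small enough that $\min\sigma(-\frac{d^2}{dx^2}+\tilde V - \eps)$ stays positive, I would apply Lemma~\ref{comparison}(a) to this shifted operator, whose Bloch exponent I denote $\kappa_\eps$, to conclude the exponential bound $z(x) \le C_\eps e^{-\kappa_\eps|x|}$ on all of $\R$.

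It then remains to match rates. Lowering the potential lowers the decay rate, so $\kappa_\eps \le \kappa$ with $\kappa_\eps \to \kappa$ as $\eps \to 0^+$; since $p>1$, continuity gives $p\kappa_\eps > \kappa$ for all sufficiently small $\eps$. Fixing such an $\eps$, the bound on $z$ yields $f(x) = \Gamma_1(x)z(x)^p = O(e^{-p\kappa_\eps|x|})$ with $p\kappa_\eps > \kappa$, which is precisely the hypothesis $\alpha>\kappa$ required by Lemma~\ref{linear_inh}; applying that lemma finishes the proof. A minor point to address along the way is the regularity needed for the comparison principle: $z$ is a positive $H^1(\R)$-solution (as are the ground states $w_1$ to which the lemma is applied), so Lemma~\ref{comparison} applies verbatim, and the resulting exponential decay is a posteriori consistent with $z\in H^1(\R)$.
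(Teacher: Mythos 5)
Your proof is correct and follows essentially the same route as the paper: an $\epsilon$-shift of the potential to obtain the differential inequality $-z''+(V_1-\lambda-\epsilon)z\le 0$ for large $|x|$, the comparison principle of Lemma~\ref{comparison} to get decay at rate $\kappa_{\lambda+\epsilon}$, continuity of the Bloch exponent in $\lambda$ to arrange $p\kappa_{\lambda+\epsilon}>\kappa_\lambda$, and then Lemma~\ref{linear_inh} with $f=\Gamma_1 z^p$. You even make explicit two points the paper leaves implicit (why the sign-indefiniteness of $\Gamma_1$ is harmless, and that $\epsilon$ must be small enough to keep the shifted operator's spectrum positive), so there is nothing to correct.
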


\begin{proof} Let $\epsilon>0$. Then there exists $x_0=x_0(\epsilon)$ such that
$$
-z'' + (V_1(x)-\lambda-\epsilon)z \leq 0 \mbox{ for } |x|\geq |x_0|.
$$
By the comparison principle of Lemma~\ref{comparison} we get the estimate
$$
z(x) \leq \const \|z\|_\infty e^{-\kappa_{\lambda+\epsilon}(|x|-|x_0|)} \mbox{ for all } x \in \R.
$$
Since the map $\lambda\to \kappa_{\lambda}$ is continuous, cf. Allair, Orive \cite{all_or}, we can choose $\epsilon>0$ so small that $p\kappa_{\lambda+\epsilon}>\kappa_\lambda$. Hence the assumptions of Lemma~\ref{linear_inh} with $f(x)=\Gamma_1(x) z(x)^p$ are fulfilled and the claim follows.
\end{proof}

\begin{lemma} The integrals in \eqref{E:1d_cond} have the asymptotic form 
\begin{align*}
\int_{-\infty}^0 \delta V(x)w_1(x-t)^2\,dx &= 
e^{-2\kappa t} \left( \frac{d_-(\lambda;w_1)^2}{1-e^{-2\kappa}} \int_{-1}^0 \delta V(x) p_-(x)^2 e^{2\kappa x}\,dx + o(1)\right), \\
\int_{-\infty}^0 \delta \Gamma(x)|w_1(x-t)|^{p+1}\,dx &= 
e^{-(p+1)\kappa t}\left( \frac{d_-(\lambda;w_1)^{p+1}}{1-e^{-(p+1)\kappa}} \int_{-1}^0 \delta \Gamma(x) p_-(x)^{p+1} e^{(p+1)\kappa x}\,dx + o(1)\right),
\end{align*}
where $o(1)\to 0$ as $t\to\infty, t\in \N$.
\label{linear_lemma}
\end{lemma}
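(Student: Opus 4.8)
The plan is to substitute $y=x-t$ in both integrals, to exploit that for integer $t$ the $1$-periodic coefficients satisfy $\delta V(y+t)=\delta V(y)$, $\delta\Gamma(y+t)=\delta\Gamma(y)$ and $p_-(y+t)=p_-(y)$, and then to insert the Bloch asymptotics of $w_1$ supplied by Lemma~\ref{nonlinear}. Concretely, for $t\in\N$ the change of variables gives
\[
\int_{-\infty}^0 \delta V(x)\,w_1(x-t)^2\,dx = \int_{-\infty}^{-t}\delta V(y)\,w_1(y)^2\,dy,
\]
and likewise for the $\delta\Gamma$-integral, where I use $w_1>0$ so that $|w_1|^{p+1}=w_1^{p+1}$. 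By Lemma~\ref{nonlinear} one has $w_1(y)/u_-(y)\to d_-:=d_-(\lambda;w_1)$ as $y\to-\infty$, with $u_-(y)=p_-(y)e^{\kappa y}$; writing $w_1(y)=(d_-+\rho(y))\,p_-(y)e^{\kappa y}$ defines a function $\rho$ with $\rho(y)\to0$ as $y\to-\infty$ (and $d_-+\rho(y)=w_1(y)/u_-(y)>0$ everywhere, since $w_1,p_->0$).

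I would then split $w_1(y)^2=d_-^2\,p_-(y)^2e^{2\kappa y}+\big[(d_-+\rho(y))^2-d_-^2\big]p_-(y)^2e^{2\kappa y}$ and treat the two pieces separately. For the leading piece, the function $g:=\delta V\,p_-^2$ is $1$-periodic, so decomposing $(-\infty,-t)=\bigcup_{j\ge0}[-t-j-1,-t-j]$ and substituting $z=y+t+j$ on each subinterval (again using $t+j\in\N$ and periodicity of $g$) turns the integral into a geometric series:
\[
\int_{-\infty}^{-t} g(y)e^{2\kappa y}\,dy=\Big(\int_{-1}^0 \delta V(z)\,p_-(z)^2e^{2\kappa z}\,dz\Big)\sum_{j\ge0}e^{-2\kappa(t+j)}=\frac{e^{-2\kappa t}}{1-e^{-2\kappa}}\int_{-1}^0 \delta V(z)\,p_-(z)^2 e^{2\kappa z}\,dz.
\]
Multiplying by $d_-^2$ reproduces exactly the leading term of the claimed formula.

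For the remainder piece the point is that it is of order $e^{-2\kappa t}o(1)$. Given $\eps>0$, choose $Y$ so that $|\rho(y)|\le\eps$ for $y\le Y$; once $t$ is large enough that $-t\le Y$, the whole range of integration lies in $\{y\le Y\}$, so $|(d_-+\rho(y))^2-d_-^2|\le(2|d_-|+\eps)\eps$ there. Since $\delta V$ and $p_-$ are bounded and $\int_{-\infty}^{-t}e^{2\kappa y}\,dy=e^{-2\kappa t}/(2\kappa)$, the remainder is at most $C\eps\,e^{-2\kappa t}$, which establishes the first asymptotic formula. The second formula follows by the identical argument with $2\kappa$ replaced by $(p+1)\kappa$, with $g$ replaced by the $1$-periodic function $\delta\Gamma\,p_-^{p+1}$, and with the algebraic factor $(d_-+\rho)^2-d_-^2$ replaced by $(d_-+\rho)^{p+1}-d_-^{p+1}$; the latter is still $O(\rho)$ uniformly for $\rho$ near $0$, because $s\mapsto s^{p+1}$ is Lipschitz on bounded subsets of $[0,\infty)$ (as $p+1>1$). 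The only delicate point, and the part I expect to require the most care, is precisely this error estimate: one must decompose $w_1^2$ (respectively $w_1^{p+1}$) into its Bloch leading part and a remainder and verify that the remainder is genuinely negligible against the $e^{-2\kappa t}$ (respectively $e^{-(p+1)\kappa t}$) scale, the exactness of the period shift relying throughout on $t$ being an integer.
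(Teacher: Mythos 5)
Your proof is correct and follows essentially the same route as the paper's: both decompose $w_1$ into its Bloch-mode leading part $d_-(\lambda;w_1)\,u_-$ plus an error made uniformly small relative to $u_-$ via Lemma~\ref{nonlinear}, evaluate the leading integral exactly using $1$-periodicity, integrality of $t$ and a geometric series, and bound the remainder against $\int_{-\infty}^{-t} e^{r\kappa y}\,dy = e^{-r\kappa t}/(r\kappa)$. The only cosmetic differences are that you substitute $y=x-t$ at the outset (the paper works in the unshifted variable) and that you invoke local Lipschitz continuity of $s\mapsto s^{p+1}$ where the paper directly uses convergence of $\bigl(w_1/u_-\bigr)^r$ to $d_-(\lambda;w_1)^r$.
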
 

\noindent
{\bf Remark.} Note that the resulting integrals on the right hand side are independent of the nonlinear ground state. 

\medskip

\begin{proof} For exponents $r\geq 2$ and a $1$-periodic bounded function $q$ let us write 
\begin{eqnarray}
I(t) & = & \int_{-\infty}^0 q(x) w_1(x-t)^r\,dx \nonumber \\
 & = &  d_-(\lambda;w_1)^r\int_{-\infty}^0 q(x) u_-(x-t)^r\,dx + E(t). \label{error}
\end{eqnarray}
For an estimation of the error term $E(t)$ we use Lemma~\ref{nonlinear}. Given $\epsilon>0$, there exists $K=K(\epsilon)>0$ such that 
$$
|w_1(x)^r-d_-(\lambda;w_1)^r u_-(x)^r| \leq \epsilon u_-(x)^r \mbox{ for all } x \leq -K,
$$
i.e.,
$$
|w_1(x-t)^r-d_-(\lambda;w_1)^r u_-(x-t)^r| \leq \epsilon u_-(x-t)^r \mbox{ for all } x \leq 0, t\geq K.
$$
Hence, for $t\geq K$
\begin{align*}
E(t) &\leq \epsilon \|q\|_\infty \int_{-\infty}^0 p_-(x-t)^r e^{r\kappa(x-t)}\,dx \\ 
& \leq \epsilon \|q\|_\infty \underbrace{\|p_-\|^r_\infty}_{=1} \frac{e^{-r\kappa t}}{r\kappa},  
\end{align*}
i.e., $E(t)=e^{-r\kappa t} o(1)$ as $t\rightarrow \infty$.
Next we compute the integral for $t\in \N$
\begin{align*}
\int_{-\infty}^0 q(x) u_-(x-t)^r\,dx &= e^{-r\kappa t} \sum_{k=0}^\infty \int_{-k-1}^{-k} q(x) p_-(x)^r e^{r\kappa x}\,dx \\ 
&= \frac{e^{-r\kappa t}}{1-e^{-r\kappa}} \int_{-1}^0 q(x) p_-(x)^r e^{r\kappa x}\,dx.
\end{align*}
This result shows that for large values of $t$ the dominating part in \eqref{error} is played by the integral w.r.t. the Bloch mode, since in comparison the error term can be made arbitrarily small. This is the claim of the lemma. 
\end{proof}

\medskip

The above computation explains why it is possible to replace the existence condition \eqref{E:1d_cond} by \eqref{gen_cond1}. The reason is that the quadratic term on the left side of \eqref{E:1d_cond} decays like $e^{-2\kappa t}$ whereas the term on the right side decays like $e^{-(p+1)\kappa t}$ as $t\to \infty$. 

\medskip

At this stage note that by Lemma~\ref{linear_lemma} we have proved part (a) of Theorem~\ref{T:Bloch_cond}. It remains to consider part (b), i.e., to decide on the sign of 
\begin{equation}
\int_{-1}^0 \delta V(x) p_-(x)^2 e^{2\kappa x}\,dx
\label{inter_result}
\end{equation}
as $\lambda\to -\infty$. First we investigate the behavior of the linear Bloch modes for $\lambda\to -\infty$. We begin by stating a relation between the spectral parameter and the coefficient of exponential decay of the Bloch modes.

\begin{lemma} Let $\kappa=\kappa(\lambda)$ be the coefficient of exponential decay of the Bloch modes for $-\frac{d^2}{dx^2}+ V_1(x)-\lambda$. Then for $\lambda$ sufficiently negative we have
$$
\frac{-\|V_1\|_\infty}{\kappa+\sqrt{|\lambda|}}\leq \kappa-\sqrt{|\lambda|} \leq \frac{\|V_1\|_\infty}{\kappa+\sqrt{|\lambda|}}.
$$
\label{lambda_est}
\end{lemma}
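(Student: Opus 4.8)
The plan is to reduce the two–sided estimate to a single clean bound on $\kappa^2+\lambda$. Multiplying the claimed inequalities through by the positive quantity $\kappa+\sqrt{|\lambda|}$, and recalling that $|\lambda|=-\lambda$ for $\lambda<0$ so that $(\kappa-\sqrt{|\lambda|})(\kappa+\sqrt{|\lambda|})=\kappa^2-|\lambda|=\kappa^2+\lambda$, one sees that the assertion of the lemma is equivalent to
\[
  |\kappa^2+\lambda|\le \|V_1\|_\infty .
\]
Thus it suffices to produce an explicit formula for $\kappa^2+\lambda$ and to read off this bound.

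First I would insert the explicit Bloch-mode ansatz. Writing $u_-(x)=p_-(x)e^{\kappa x}$ for the mode decaying at $-\infty$ and substituting into $-u_-''+(V_1(x)-\lambda)u_-=0$, the exponential factor cancels and one is left with a periodic equation for the $1$-periodic factor $p_-$, namely
\[
  -p_-''-2\kappa p_-'+\big(V_1(x)-\lambda-\kappa^2\big)p_-=0 \quad\text{on } \R .
\]
(The analogous computation with $u_+=p_+e^{-\kappa x}$ only flips the sign of the first-order term, so either mode serves equally well.) The key step is then to integrate this identity over a single period $[0,1]$: since $p_-$ and $p_-'$ are both $1$-periodic and absolutely continuous, the terms $\int_0^1 p_-''\,dx$ and $\int_0^1 p_-'\,dx$ vanish, leaving
\[
  (\kappa^2+\lambda)\int_0^1 p_-(x)\,dx=\int_0^1 V_1(x)\,p_-(x)\,dx .
\]

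Because $p_->0$ (from its definition as a positive periodic factor), the last identity exhibits $\kappa^2+\lambda$ as a weighted average of $V_1$ with strictly positive weight $p_-$; hence $|\kappa^2+\lambda|\le\|V_1\|_\infty$ is immediate. Reversing the algebraic reduction of the first paragraph—dividing by $\kappa+\sqrt{|\lambda|}>0$, which is legitimate once $\lambda$ is negative enough that the Bloch modes exist and $\kappa>0$—then gives the stated bounds. I do not expect a genuine obstacle in this argument: the only point demanding care is bookkeeping of the sign of $\lambda$ and the positivity of $\kappa+\sqrt{|\lambda|}$ in passing between the factored form $|\kappa^2+\lambda|\le\|V_1\|_\infty$ and the form stated in the lemma, together with the (routine) justification that periodicity of $p_-$ makes the two boundary contributions vanish in the weak sense when $V_1$ is merely bounded.
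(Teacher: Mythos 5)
Your proof is correct, but it follows a genuinely different route from the paper's. The paper proves this estimate with the comparison principle of Lemma~\ref{comparison}: since $u_\pm$ satisfy $-u_\pm''+(\|V_1\|_\infty-\lambda)u_\pm\geq 0$ and $-u_\pm''+(-\|V_1\|_\infty-\lambda)u_\pm\leq 0$, the Bloch mode $u_+$ is sandwiched between pure exponentials, which forces $\sqrt{-\|V_1\|_\infty-\lambda}\leq\kappa\leq\sqrt{\|V_1\|_\infty-\lambda}$, and the stated inequality then follows by the same factorization $\kappa^2+\lambda=(\kappa-\sqrt{|\lambda|})(\kappa+\sqrt{|\lambda|})$ that you use. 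You instead derive an \emph{exact identity}: integrating the periodic-part equation $-p_-''-2\kappa p_-'+(V_1-\lambda-\kappa^2)p_-=0$ (which is the same equation as in Lemma~\ref{p_rep}) over one period kills the derivative terms by periodicity and yields
\[
\kappa^2+\lambda=\frac{\int_0^1 V_1(x)\,p_-(x)\,dx}{\int_0^1 p_-(x)\,dx},
\]
so $|\kappa^2+\lambda|\leq\|V_1\|_\infty$ follows from the positivity of $p_-$. Both the regularity point (with $V_1\in L^\infty$ one has $p_-\in W^{2,\infty}_{loc}$, so the boundary terms genuinely vanish) and the positivity of $\kappa+\sqrt{|\lambda|}$ are handled adequately. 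What the paper's argument buys is economy within its own architecture: Lemma~\ref{comparison} is already established and is reused here at no cost. What your argument buys is more: it is self-contained, avoids the comparison machinery entirely, and the averaging identity is strictly stronger than the two-sided bound --- indeed, combined with the uniform convergence $p_-\to 1$ it immediately gives the refinement $\sqrt{|\lambda|}\,(\kappa-\sqrt{|\lambda|})\to\tfrac12\int_0^1V_1(s)\,ds$, which the paper has to work harder for in Lemma~\ref{asymptotic}.
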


\begin{proof} We prove the estimate of the difference between $\kappa$ and $\sqrt{|\lambda|}$ via the comparison principle. The Bloch modes $u_\pm(x) = p_\pm(x) e^{\mp \kappa x}$ satisfy
$$
- u_\pm'' + (\|V_1\|_\infty-\lambda) u_\pm \geq 0 \mbox{ and } - u_\pm'' + (-\|V_1\|_\infty-\lambda) u_\pm \leq 0.
$$
Hence, using the comparison principle of Lemma~\ref{comparison}, we get for $\lambda<-\|V_1\|_\infty$
$$
C_1 e^{-\sqrt{\|V_1\|_\infty-\lambda}x} \leq u_+ (x) \leq C_2 e^{-\sqrt{-\|V_1\|_\infty-\lambda}x} \mbox{ for } x \in \R.
$$
This implies
$$
\sqrt{\|V_1\|_\infty-\lambda} \geq \kappa \geq \sqrt{-\|V_1\|_\infty-\lambda}
$$
from which the statement easily follows.
\end{proof}

Next we give a representation of the periodic part of the Bloch mode $u_-(x) = p_-(x)e^{\kappa x}$. 

\begin{lemma} The periodic part $p_-$ of the Bloch mode $u_-$ of the operator $-\frac{d^2}{dx^2}+ V_1(x)-\lambda$ satisfies the differential equation
$$
p_-'' + 2\kappa p_-' +(\kappa^2+\lambda)p_-= V_1(x) p_- \mbox{ on } [-1,0]
$$
with periodic boundary conditions and therefore 
\begin{align*}
p_-(x) =& \left( \frac{\int_{-1}^0 e^{(\kappa-\sqrt{|\lambda|})s}p_-(s) V_1(s) \,ds}{2\sqrt{|\lambda|}(e^{\kappa-\sqrt{|\lambda|}}-1)} +\frac{1}{2\sqrt{|\lambda|}}\int_{-1}^x e^{(\kappa-\sqrt{|\lambda|})s}p_-(s) V_1(s)\, ds\right)e^{(-\kappa+\sqrt{|\lambda|})x} \\
&+ \left( \frac{-\int_{-1}^0 e^{(\kappa+\sqrt{|\lambda|})s}p_-(s) V_1(s) \,ds}{2\sqrt{|\lambda|}(e^{\kappa+\sqrt{|\lambda|}}-1)} -\frac{1}{2\sqrt{|\lambda|}}\int_{-1}^x e^{(\kappa+\sqrt{|\lambda|})s}p_-(s) V_1(s)\, ds\right)e^{(-\kappa-\sqrt{|\lambda|})x}.
\end{align*}
\label{p_rep}
\end{lemma}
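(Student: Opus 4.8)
The plan is to obtain the stated equation for $p_-$ by a direct substitution and then to read off the representation by solving that equation with the method of variation of constants, fixing the two integration constants through the periodicity of $p_-$.

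First I would insert $u_-(x)=p_-(x)e^{\kappa x}$ into its defining equation $-u_-''+(V_1(x)-\lambda)u_-=0$. Using $u_-''=(p_-''+2\kappa p_-'+\kappa^2p_-)e^{\kappa x}$ and dividing by $e^{\kappa x}$ produces exactly $p_-''+2\kappa p_-'+(\kappa^2+\lambda)p_-=V_1(x)p_-$. Since $p_-$ has period $1$, it satisfies $p_-(-1)=p_-(0)$ and $p_-'(-1)=p_-'(0)$; thus $p_-$ is a periodic solution of this inhomogeneous second order equation on $[-1,0]$.

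Next I would regard the right-hand side $g:=V_1p_-$ as a known forcing and solve the constant-coefficient equation $p_-''+2\kappa p_-'+(\kappa^2+\lambda)p_-=g$ by variation of constants. Because $\lambda<0$, the characteristic roots are $-\kappa\pm\sqrt{|\lambda|}$, so $\{e^{(-\kappa+\sqrt{|\lambda|})x},\,e^{(-\kappa-\sqrt{|\lambda|})x}\}$ is a fundamental system, with Wronskian $-2\sqrt{|\lambda|}\,e^{-2\kappa x}$ by Abel's formula. Carrying out variation of constants with base point $-1$ yields a particular solution whose two summands are precisely the two integrals with variable upper limit $x$ (and prefactor $\pm\tfrac{1}{2\sqrt{|\lambda|}}$) that appear in the claimed formula.

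Finally I would add the homogeneous part $c_1e^{(-\kappa+\sqrt{|\lambda|})x}+c_2e^{(-\kappa-\sqrt{|\lambda|})x}$ and determine $c_1,c_2$ from the two periodicity conditions. A helpful simplification is that on differentiating the particular solution the boundary contributions from the two variable upper limits are equal and opposite, hence cancel, so $p_-'$ contains no integrand boundary terms and the two conditions reduce to a transparent $2\times2$ linear system. Solving it gives $c_1=\tfrac{1}{2\sqrt{|\lambda|}(e^{\kappa-\sqrt{|\lambda|}}-1)}\int_{-1}^0e^{(\kappa-\sqrt{|\lambda|})s}p_-(s)V_1(s)\,ds$ and $c_2=\tfrac{-1}{2\sqrt{|\lambda|}(e^{\kappa+\sqrt{|\lambda|}}-1)}\int_{-1}^0e^{(\kappa+\sqrt{|\lambda|})s}p_-(s)V_1(s)\,ds$, which are exactly the constant parts of the asserted expression. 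Logically, these steps show that $p_-$ and the formula are both periodic solutions of the same inhomogeneous problem with forcing $V_1p_-$; since that problem has a unique periodic solution once $\kappa\neq\sqrt{|\lambda|}$, the two coincide.

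The computation is routine; the one point demanding care is exactly this non-resonance condition $e^{\kappa-\sqrt{|\lambda|}}\neq1$, i.e. $\kappa\neq\sqrt{|\lambda|}$ (the factor $e^{\kappa+\sqrt{|\lambda|}}-1$ never vanishes since $\kappa+\sqrt{|\lambda|}>0$). When $\kappa=\sqrt{|\lambda|}$ the constant function solves the homogeneous periodic equation, the problem becomes resonant and the representation degenerates; outside this exceptional coincidence the formula is valid. I expect the verification of the two boundary conditions, together with the bookkeeping of signs and of the factor $2\sqrt{|\lambda|}$, to be the most error-prone part of the argument, though it presents no genuine difficulty.
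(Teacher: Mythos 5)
Your proposal is correct and follows essentially the same route as the paper: derive the ODE for $p_-$ by substituting $u_-=p_-e^{\kappa x}$, solve it by variation of constants with the fundamental system $e^{(-\kappa\pm\sqrt{|\lambda|})x}$ based at $x=-1$, and determine the two constants from the periodicity conditions $p_-(0)=p_-(-1)$, $p_-'(0)=p_-'(-1)$. Your additional observations (the Lagrange cancellation of the boundary terms in $p_-'$, and the non-resonance caveat $\kappa\neq\sqrt{|\lambda|}$ needed so that $e^{\kappa-\sqrt{|\lambda|}}-1\neq 0$) merely make explicit details the paper leaves implicit.
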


\begin{proof} Starting from the solutions $e^{(-\kappa+\sqrt{|\lambda|})x}$, $e^{(-\kappa-\sqrt{|\lambda|})x}$ of the homogeneous equation $p''+2\kappa p'+(\kappa^2+\lambda)p=0$, we get via the variation of constants 
\begin{align*}
p_-(x) =& \left( \alpha+\frac{1}{2\sqrt{|\lambda|}}\int_{-1}^x e^{(\kappa-\sqrt{|\lambda|})s}p_-(s) V_1(s)\, ds\right)e^{(-\kappa+\sqrt{|\lambda|})x} \\
&+ \left(\beta-\frac{1}{2\sqrt{|\lambda|}}\int_{-1}^x e^{(\kappa+\sqrt{|\lambda|})s}p_-(s) V_1(s)\, ds\right)e^{(-\kappa-\sqrt{|\lambda|})x}.
\end{align*}
Inserting the periodicity conditions $p_-(0)=p_-(-1)$ and $p_-'(0)=p_-'(-1)$, we obtain the claim.
\end{proof}

Now we can state the asymptotic behavior of $p_-$ as $\lambda\to -\infty$. 

\begin{lemma} As $\lambda\to -\infty$, we have that $p_-\to 1$ uniformly on $[-1,0]$. More precisely
$$
\|p_- - 1\|_\infty = O\left(\tfrac{1}{\sqrt{|\lambda|}}\right) \quad  \text{as} \ \lambda \rightarrow -\infty
$$
and, in addition,
$$
\sqrt{|\lambda|}(\kappa-\sqrt{|\lambda|}) - \frac{1}{2}\int_{-1}^0 V_1(s)\,ds=  O\left(\tfrac{1}{\sqrt{|\lambda|}}\right) \quad  \text{as} \ \lambda \rightarrow -\infty.
$$
\label{asymptotic}
\end{lemma}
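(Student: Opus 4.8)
The plan is to extract an exact ``mean value'' identity from the periodic boundary value problem of Lemma~\ref{p_rep} and then feed it into the variation-of-constants representation, using the two ingredients in tandem to bootstrap both asymptotics at once. Throughout write $\nu := \sqrt{|\lambda|}$ and $\mu := \kappa - \nu$, so that Lemma~\ref{lambda_est} gives $\mu = O(1/\nu)$, while $\kappa+\nu = 2\nu+\mu$ and $\kappa^2+\lambda = \kappa^2-\nu^2 = \mu(2\nu+\mu)$. Set $m := \int_{-1}^0 p_-(x)\,dx$ and $w := \int_{-1}^0 V_1(x)p_-(x)\,dx$, and note $0<m\le \|p_-\|_\infty = 1$ and $|w|\le\|V_1\|_\infty$. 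First I would integrate the equation $p_-'' + 2\kappa p_-' + \mu(2\nu+\mu)p_- = V_1 p_-$ of Lemma~\ref{p_rep} over the single period $[-1,0]$: the periodic boundary conditions $p_-(0)=p_-(-1)$ and $p_-'(0)=p_-'(-1)$ annihilate the first two terms, leaving the exact identity
\begin{equation}
\mu(2\nu+\mu)\,m = w.
\label{E:mean_id}
\end{equation}

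The heart of the matter is to prove $p_-(x) = m + O(1/\nu)$ uniformly on $[-1,0]$ from the representation of Lemma~\ref{p_rep}. Its second summand carries the prefactor $e^{(-\kappa-\sqrt{|\lambda|})x}=e^{-(2\nu+\mu)x}$; since $2\nu+\mu$ is large, estimating its two integrals by $|p_-|\le 1$, $|V_1|\le\|V_1\|_\infty$ and $\int e^{(2\nu+\mu)s}\,ds = O(1/\nu)$ shows this whole summand is $O(1/\nu^2)$, hence negligible. In the first summand the prefactor is $e^{(-\kappa+\sqrt{|\lambda|})x}=e^{-\mu x} = 1 + O(1/\nu)$, the running integral $\tfrac{1}{2\nu}\int_{-1}^x e^{\mu s}p_- V_1\,ds$ is $O(1/\nu)$ uniformly, and the only delicate piece is $\tfrac{1}{2\nu}\,\frac{\int_{-1}^0 e^{\mu s}p_- V_1\,ds}{e^\mu - 1}$. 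Taylor expansion gives $e^\mu-1=\mu(1+O(\mu))$ and $\int_{-1}^0 e^{\mu s}p_- V_1\,ds = w + O(\mu)$, so this piece equals $\tfrac{1}{2\nu}\bigl(\tfrac{w}{\mu}+O(1)\bigr)$. \textbf{This is the crucial step:} the quotient $w/\mu$ is a priori of order $\nu$ and would otherwise blow up, but identity~\eqref{E:mean_id} replaces it by $w/\mu = (2\nu+\mu)m$, whence the piece becomes $\tfrac{1}{2\nu}\bigl((2\nu+\mu)m+O(1)\bigr) = m + O(1/\nu)$. Collecting the three contributions yields $p_- = m + O(1/\nu)$.

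With this uniform estimate the normalization closes the argument. Since $p_->0$, is $1$-periodic and $\|p_-\|_\infty=1$, evaluating $p_-(x)=m+O(1/\nu)$ at a maximizer gives $1\le m + O(1/\nu)$, so $m\ge 1-O(1/\nu)$; on the other hand $m=\int_{-1}^0 p_-\le\|p_-\|_\infty=1$. Hence $m = 1+O(1/\nu)$, and therefore $\|p_--1\|_\infty = O(1/\nu)$, which is the first claim. For the second claim I substitute $m=1+O(1/\nu)$ and $w=\int_{-1}^0 V_1 p_- = \int_{-1}^0 V_1 + O(1/\nu)$ into~\eqref{E:mean_id}; using $\mu(2\nu+\mu)=2\nu\mu+O(1/\nu^2)$ this gives $2\nu\mu = \int_{-1}^0 V_1(s)\,ds + O(1/\nu)$, that is, $\sqrt{|\lambda|}\,(\kappa-\sqrt{|\lambda|}) - \tfrac12\int_{-1}^0 V_1(s)\,ds = O(1/\sqrt{|\lambda|})$.

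I expect the main obstacle to be precisely the indeterminate quotient $w/\mu$ appearing in the representation: this term is $O(1)$ only thanks to the cancellation supplied by~\eqref{E:mean_id}, so the integrated identity and the variation-of-constants formula must be combined rather than applied one after the other. A minor technical point to address is the degenerate case $\mu=0$ (where $e^\mu-1$ vanishes in Lemma~\ref{p_rep}); by~\eqref{E:mean_id} one then also has $w=0$, and the singular quotient is resolved by continuity in $\mu$, so the estimate persists and the asymptotics as $\lambda\to-\infty$ are unaffected.
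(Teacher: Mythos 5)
Your proof is correct, but it is built around a device the paper does not use: the exact mean--value identity $(\kappa^2+\lambda)\int_{-1}^0 p_-\,dx=\int_{-1}^0 V_1 p_-\,dx$, obtained by integrating the periodic boundary value problem of Lemma~\ref{p_rep} over one period so that the derivative terms drop out. The paper's own proof goes differently in both halves: for the first claim it shows $\|p_-'\|_\infty=O(1/\sqrt{|\lambda|})$ (asserted as a direct computation from the representation of Lemma~\ref{p_rep}) and combines this with the normalization $\|p_-\|_\infty=1$ and the mean value theorem; for the second claim it reduces the representation to $p_-(x)=\gamma_\lambda+O(1/\sqrt{|\lambda|})$ with $\gamma_\lambda=\int_{-1}^0 p_-V_1\,ds\,\big/\,\bigl(2\sqrt{|\lambda|}(\kappa-\sqrt{|\lambda|})\bigr)$ and then identifies the accumulation points of the bounded quantity $\sqrt{|\lambda|}(\kappa-\sqrt{|\lambda|})$, feeding in the already-proved first claim. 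Your route buys a genuine simplification: the a priori singular quotient $w/\mu$ (with $\mu=\kappa-\sqrt{|\lambda|}$, $w=\int_{-1}^0 V_1p_-$) is tamed by an exact algebraic cancellation rather than by boundedness plus a compactness/accumulation-point argument, so you need neither the derivative bound (which the paper leaves to the reader) nor the limit-point identification; you first get the oscillation estimate $p_-=m+O(1/\nu)$ around the mean $m$, and then both asymptotic claims fall out of the normalization and the identity in two lines. Both arguments rest on the same external inputs, Lemma~\ref{lambda_est} and Lemma~\ref{p_rep}, and your estimates of the two summands of the representation (second summand $O(1/\nu^2)$, running integral $O(1/\nu)$, prefactor $e^{-\mu x}=1+O(1/\nu)$) all check out.

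One caveat, which you flag yourself: the formula of Lemma~\ref{p_rep} is ill-defined when $\kappa=\sqrt{|\lambda|}$, i.e.\ $\mu=0$, and ``continuity in $\mu$'' is not by itself a complete repair, since $\mu$ is a function of $\lambda$ and the representation itself (not merely one term of it) degenerates there. The clean fix is to rerun variation of constants for such $\lambda$ with the fundamental system $\{1,e^{-2\kappa x}\}$; the same (in fact easier) estimates again give $p_-=m+O(1/\nu)$, and your conclusion persists. Since the paper silently ignores this degenerate case too, it does not genuinely distinguish the two proofs.
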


\begin{proof} First one checks by a direct computation using Lemma~\ref{p_rep} that $\|p'_-\|_\infty = O(1/\sqrt{|\lambda|})$ as $\lambda\to -\infty$. Hence, by the normalization $\|p_-\|_\infty=1$ and by continuity of $p_-$ we have $p_-(\xi_\lambda)=1$ for some $\xi_\lambda \in [-1,0]$. By the mean value theorem  
$$
|p_-(x)-1|\leq \|p_-'\|_\infty|x-\xi_\lambda| =O(1/\sqrt{|\lambda|})
$$
as $\lambda\to -\infty$ uniformly for $x\in [-1,0]$. This proves the first part of the lemma. For the second part one first finds again by direct estimates from Lemma~\ref{lambda_est} and Lemma~\ref{p_rep}
\begin{align}
p_-(x) &= \frac{\int_{-1}^0 e^{(\kappa-\sqrt{|\lambda|})s}p_-(s) V_1(s) \,ds}{2\sqrt{|\lambda|}(e^{\kappa-\sqrt{|\lambda|}}-1)} e^{(-\kappa+\sqrt{|\lambda|})x}+ O\left(\tfrac{1}{\sqrt{|\lambda|}}\right) \label{p_l} \\
&= \frac{\int_{-1}^0 p_-(s) V_1(s) \,ds}{2\sqrt{|\lambda|}(\kappa-\sqrt{|\lambda|})}+ O\left(\tfrac{1}{\sqrt{|\lambda|}}\right), \nonumber
\end{align}
where we have used that $e^{(\kappa-\sqrt{|\lambda|})(s-x)}=1+O(1/\sqrt{|\lambda|})$ uniformly in $s,x\in[-1,0]$.
Next we observe by Lemma~\ref{lambda_est} that $\sqrt{|\lambda|}(\kappa-\sqrt{|\lambda|})$ is bounded in absolute value by $\|V_1\|_\infty$, and hence has accumulation points as $\lambda\to-\infty$. Every accumulation point $d$ satisfies 
$$
1 = \frac{1}{2d} \int_{-1}^0 V_1(s) \,ds.
$$
If we set $\gamma_\lambda:= \frac{\int_{-1}^0 p_-(s) V_1(s) \,ds}{2\sqrt{|\lambda|}(\kappa-\sqrt{|\lambda|})}$, then 
$\gamma_\lambda-1 = O(1/\sqrt{|\lambda|})$ by \eqref{p_l} and thus
$$
\sqrt{|\lambda|}(\kappa-\sqrt{|\lambda|}) = \tfrac{1}{2}\int_{-1}^0V_1(s)ds +O(1/\sqrt{|\lambda|}),
$$
which is the second claim of the lemma.
\end{proof}

Finally, we can determine the behavior of the integral in \eqref{inter_result}. 

\begin{lemma} Let $V_1, V_2$ be bounded and 1-periodic and $\delta V:= V_2-V_1$. If there exists $\eps>0$ such that $\delta V$ is continuous and negative in $[-\eps,0)$, then for sufficiently negative $\lambda$
$$
\int_{-1}^0 \delta V(x)p_-(x)^2 e^{2\kappa x}\,dx < 0.
$$
\label{L:integ_asympt}
\end{lemma}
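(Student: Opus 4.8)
The plan is to exploit the concentration of the weight $e^{2\kappa x}$ near $x=0$ as $\lambda\to-\infty$. By Lemma~\ref{lambda_est} we have $\kappa=\kappa(\lambda)\to\infty$ (indeed $\kappa\geq\sqrt{|\lambda|-\|V_1\|_\infty}$), so for $x<0$ the factor $e^{2\kappa x}$ decays ever faster away from $0$ and the mass of the integral concentrates at the right endpoint. Since $\delta V<0$ on $[-\eps,0)$ and, by Lemma~\ref{asymptotic}, $p_-\to1$ uniformly on $[-1,0]$, the integrand is negative precisely where the mass concentrates. Accordingly I would first fix $\lambda$ so negative that $p_-(x)^2\geq\tfrac12$ for all $x\in[-1,0]$, which Lemma~\ref{asymptotic} permits.

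Next I split $\int_{-1}^0=\int_{-1}^{-\eps}+\int_{-\eps}^0$ and weigh the two pieces against each other. For the tail I use the normalization $0<p_-\leq\|p_-\|_\infty=1$ to bound the integrand in absolute value by $\|\delta V\|_\infty e^{2\kappa x}$, giving
$$\left|\int_{-1}^{-\eps}\delta V(x)\,p_-(x)^2e^{2\kappa x}\,dx\right|\leq\|\delta V\|_\infty\int_{-\infty}^{-\eps}e^{2\kappa x}\,dx=\frac{\|\delta V\|_\infty}{2\kappa}\,e^{-2\kappa\eps}.$$
On $[-\eps,0)$ the integrand is nonpositive; to extract a quantitative negative bound I pick an interior point $x_*\in(-\eps,0)$ and use the continuity of $\delta V$ to fix $\rho>0$ with $[x_*-\rho,x_*+\rho]\subset(-\eps,0)$ and $\delta V\leq\tfrac12\delta V(x_*)<0$ there. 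Discarding the remaining nonpositive part of $[-\eps,0]$ and using $p_-^2\geq\tfrac12$ together with $e^{2\kappa x}\geq e^{2\kappa(x_*-\rho)}$ on the subinterval, I obtain
$$\int_{-\eps}^0\delta V(x)\,p_-(x)^2e^{2\kappa x}\,dx\leq\frac{\rho}{2}\,\delta V(x_*)\,e^{2\kappa(x_*-\rho)}.$$

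Combining the two estimates bounds the full integral by $\tfrac{\rho}{2}\delta V(x_*)e^{2\kappa(x_*-\rho)}+\tfrac{\|\delta V\|_\infty}{2\kappa}e^{-2\kappa\eps}$. Since $x_*-\rho>-\eps$, the quotient of the magnitude of the negative leading term by the positive tail is of order $\kappa\,e^{2\kappa(x_*-\rho+\eps)}$, which tends to $+\infty$ as $\lambda\to-\infty$; hence for all sufficiently negative $\lambda$ the leading term wins and the integral is strictly negative, as claimed. The delicate point --- and the reason I work on a fixed subinterval bounded away from $0$ rather than passing to the limit $x\to0^-$ --- is that $\delta V$ is only assumed continuous and negative on $[-\eps,0)$ and may well degenerate to $0$ at the endpoint. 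Anchoring the estimate at an interior $x_*$ makes no use of $\lim_{x\to0^-}\delta V$ yet still produces a contribution of size $e^{2\kappa(x_*-\rho)}$, which dominates the $e^{-2\kappa\eps}$ tail exactly because $x_*-\rho>-\eps$.
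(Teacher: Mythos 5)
Your proof is correct and follows essentially the same route as the paper's: both split off the tail over $[-1,-\eps]$, bounded by $\tfrac{\|\delta V\|_\infty}{2\kappa}e^{-2\kappa\eps}$, both extract a uniformly negative contribution of $\delta V$ on a compact subinterval of $[-\eps,0)$ bounded away from $0$ (the paper takes $[-\eps,-\eps/2]$, where continuity gives $\delta V<-\alpha$; you take a neighborhood of an interior point $x_*$), and both conclude by the same exponential comparison using $\kappa\to\infty$ from Lemma~\ref{lambda_est}. The only cosmetic difference is that you control $p_-^2$ via the crude two-sided bound $\tfrac12\leq p_-^2\leq 1$ (from Lemma~\ref{asymptotic} and the normalization), whereas the paper inserts the expansion $p_-^2=1+O(1/\sqrt{|\lambda|})$ and estimates the resulting error term separately.
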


\begin{proof}
Since $\delta V$ is continuous and negative on $[-\eps,0)$, there exist $\alpha>0$ such that $\delta V(x)<-\alpha$ for $x\in [-\eps, -\eps/2]$. Using that $p_-(x)^2-1=O(1/\sqrt{|\lambda)|})$, we estimate
\begin{eqnarray*}
\lefteqn{\int_{-1}^0 \delta V(x) p_-(x)^2 e^{2\kappa x}\,dx} \\
&\leq & \int_{-1}^{-\eps/2} \delta V(x) e^{2\kappa x}\,dx + \int_{-1}^{-\eps/2} \delta V(x)\big(p_-(x)^2-1\big)e^{2\kappa x}\,dx \\
& \leq & \int_{-1}^{-\eps} \delta V(x) e^{2\kappa x}\,dx -\alpha \int_{-\eps}^{-\eps/2} e^{2\kappa x}\,dx + \|\delta V\|_\infty O\Big(\frac{1}{\sqrt{|\lambda|}}\Big)\int_{-1}^{-\eps/2} e^{2\kappa x}\,dx  \\
& \leq & \frac{\|\delta V\|_\infty}{2\kappa}e^{-2\kappa \eps} -
\frac{\alpha}{2\kappa}(e^{-\kappa \eps}-e^{-2\kappa\eps}) + \|\delta V\|_\infty O\Big(\frac{1}{\sqrt{|\lambda|}}\Big) \frac{e^{-\kappa\eps}}{2\kappa} \\
&= & \frac{e^{-2\kappa \eps}}{2\kappa}\left(\|\delta V\|_\infty+\alpha-\alpha e^{\kappa\eps}+\|\delta V\|_\infty O\Big(\frac{1}{\sqrt{|\lambda|}}\Big)e^{\kappa\eps}\right)
\end{eqnarray*}
which is negative for $\lambda\ll -1$ because $\kappa\to \infty$ as $\lambda\to -\infty$. This proves the claim.
\end{proof}

With this the proof of Theorem~\ref{T:Bloch_cond} is complete.


\section{One Dimension: Examples and Heuristics} \label{S:1D_example}

\subsection{A Dislocation Interface Example}

As a particular example of a one-dimensional interface we consider so-called dislocated potentials, i.e., if $V_0, \Gamma_0$ are bounded 1-periodic functions, then we set 
$$
V(x) = \left\{
\begin{array}{ll}
V_0(x+\tau), & x>0, \vspace{\jot}\\
V_0(x-\tau), & x<0,
\end{array} \right.
\qquad
\Gamma(x) = \left\{
\begin{array}{ll}
\Gamma_0(x+\tau), & x>0, \vspace{\jot}\\
\Gamma_0(x-\tau), & x<0,
\end{array} \right.
$$
where $\tau\in\R$ is the dislocation parameter. We consider problem \eqref{int_nls_1d}  and analogously to the notation in \eqref{per_nls} we define $L_{\lambda,1}:= -\frac{d^2}{dx^2}+ V_0(x+\tau)-\lambda$, $L_{\lambda,2}:= -\frac{d^2}{dx^2}+ V_0(x-\tau)-\lambda$, $\Gamma_1(x) := \Gamma_0(x+\tau)$, and $\Gamma_2(x):=\Gamma_0(x-\tau)$.
Note that $c_1=c_2$ in this case. The following is then a direct corollary of Theorem~\ref{T:Bloch_cond} and Remark~\ref{R:bloch_cond_reverse}.

\begin{corollary} Assume that $V_0, \Gamma_0$ are bounded 1-periodic functions on the real line with $\esssup\Gamma_0>0$. Assume moreover that $0< \min \sigma(L_\lambda)$ and $1<p<\infty$.
\begin{itemize}
\item[(a)] Problem \eqref{int_nls_1d} in the dislocation case with $\tau\in \R$ has a strong ground state provided 
\begin{equation}
\int_{-1}^0 \Big(V_0(x-\tau)-V_0(x+\tau)\Big) p_-(x+\tau)^2 e^{2\kappa x}\,dx<0
\label{dis_cond1}
\end{equation}
or 
\begin{equation}
\int_0^1 \Big(V_0(x+\tau)-V_0(x-\tau)\Big) p_+(x-\tau)^2 e^{-2\kappa x}\,dx<0,
\label{dis_cond1'}
\end{equation}
where $p_\pm(x) e^{\mp\kappa x}$ with $\kappa>0$ are the Bloch modes of the operator $-\frac{d^2}{dx^2}+ V_0(x)-\lambda$. 
\item[(b)] For $\lambda<0$ sufficiently negative at least one of the conditions \eqref{dis_cond1}, \eqref{dis_cond1'} is fulfilled provided $V_0$ is a $C^1$-potential and
\begin{equation}
V_0(-\tau)\not = V_0(\tau) \qquad \mbox{ or } \qquad V_0(-\tau)= V_0(\tau) \mbox{ and } V_0'(-\tau) > V_0'(\tau).
\label{dis_cond2}
\end{equation}
For $|\tau|$ sufficiently small the above condition \eqref{dis_cond2} on $V_0$ is fulfilled if
\begin{equation}
\,V_0'(0)\not= 0 \qquad \mbox{ or } \qquad V_0'(0)=0 \mbox{ and } \sign\tau\,V_0''(0)<0,
\label{dis_cond3}
\end{equation}
where for the second part of the condition one needs to assume that $V_0$ is twice differentiable at $0$.
\end{itemize}
\label{dislocate}
\end{corollary}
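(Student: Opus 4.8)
The plan is to deduce everything directly from Theorem~\ref{T:Bloch_cond} and Remark~\ref{R:bloch_cond_reverse}, the only genuine work being to track how the Bloch modes and the pointwise data transform under the dislocation shift. The crucial preliminary observation is that here $V_2(x)=V_0(x-\tau)=V_1(x-2\tau)$ and likewise $\Gamma_2(x)=\Gamma_1(x-2\tau)$, so the two purely periodic problems are translates of one another; since the ground-state energy is invariant under translation of the coefficients, $c_1=c_2$. (Note also that $\esssup\Gamma_0>0$ yields (H2) for both $\Gamma_1,\Gamma_2$, as essential suprema are translation invariant.) Consequently both orderings $c_1\le c_2$ and $c_2\le c_1$ are available, and I am free to invoke either Theorem~\ref{T:Bloch_cond}(a) or its mirror image in Remark~\ref{R:bloch_cond_reverse}, whichever is convenient; this is exactly why \emph{either} of \eqref{dis_cond1}, \eqref{dis_cond1'} suffices.

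For part~(a) I first express the relevant Bloch modes in terms of those of $-\frac{d^2}{dx^2}+V_0-\lambda$. If $U_-(x)=p_-(x)e^{\kappa x}$ solves the linear equation with potential $V_0$, then $U_-(x+\tau)$ solves it with potential $V_0(\cdot+\tau)=V_1$; hence the mode of $-\frac{d^2}{dx^2}+V_1-\lambda$ decaying at $-\infty$ is the positive multiple $e^{\kappa\tau}p_-(x+\tau)e^{\kappa x}$ of $p_-(x+\tau)e^{\kappa x}$, so its periodic part is proportional to $p_-(\cdot+\tau)$. Substituting this into condition \eqref{gen_cond1} of Theorem~\ref{T:Bloch_cond}(a) — and noting that the overall positive multiplicative constant does not affect the sign of the integral — turns \eqref{gen_cond1} into precisely \eqref{dis_cond1} after inserting $V_2(x)-V_1(x)=V_0(x-\tau)-V_0(x+\tau)$. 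The symmetric computation, applied to the mode of $-\frac{d^2}{dx^2}+V_2-\lambda$ decaying at $+\infty$ (a positive multiple of $p_+(x-\tau)e^{-\kappa(x-\tau)}$) in the condition of Remark~\ref{R:bloch_cond_reverse}, yields \eqref{dis_cond1'}. Since $c_1=c_2$, either criterion produces a strong ground state, which is part~(a).

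For the first statement of part~(b) I use the $C^1$ refinement \eqref{gen_cond2}, legitimate because $V_1=V_0(\cdot+\tau)$ and $V_2=V_0(\cdot-\tau)$ are $C^1$ near $x=0$. The pointwise data read $V_1(0)=V_0(\tau)$, $V_2(0)=V_0(-\tau)$, $V_1'(0)=V_0'(\tau)$, $V_2'(0)=V_0'(-\tau)$, so \eqref{gen_cond2} and its analogue in Remark~\ref{R:bloch_cond_reverse} become conditions on $V_0$ at $\pm\tau$. A short case distinction then shows \eqref{dis_cond2} forces at least one of them: if $V_0(-\tau)<V_0(\tau)$ then $V_2(0)<V_1(0)$ and Theorem~\ref{T:Bloch_cond}(b) gives \eqref{dis_cond1}; if $V_0(-\tau)>V_0(\tau)$ then $V_1(0)<V_2(0)$ and Remark~\ref{R:bloch_cond_reverse} gives \eqref{dis_cond1'}; and in the borderline case $V_0(-\tau)=V_0(\tau)$, $V_0'(-\tau)>V_0'(\tau)$ the common derivative condition $V_2'(0)>V_1'(0)$ holds, so either criterion applies. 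For the final reduction to \eqref{dis_cond3} for small $|\tau|$ I Taylor expand about $0$: $V_0(\tau)-V_0(-\tau)=2\tau V_0'(0)+O(\tau^3)$ and $V_0'(-\tau)-V_0'(\tau)=-2\tau V_0''(0)+O(\tau^2)$. If $V_0'(0)\ne0$ the first expansion is nonzero for small $\tau\ne0$, giving the first alternative of \eqref{dis_cond2}; if $V_0'(0)=0$ and $\sign\tau\,V_0''(0)<0$, then whenever $V_0(\pm\tau)$ fail to be equal the first alternative holds, while if they coincide the second expansion is positive (since $-2\tau V_0''(0)>0$), giving the derivative alternative of \eqref{dis_cond2}.

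I expect no serious obstacle — the statement really is a corollary — so the only points requiring care are the bookkeeping ones: correctly shifting the Bloch modes (and confirming that the unknown positive normalization constants are irrelevant because only the sign of each integral matters), and handling the degenerate case $V_0'(0)=0$ in the last step, where one must pass from a comparison of the values $V_0(\pm\tau)$ to a comparison of the derivatives $V_0'(\pm\tau)$ via the second-order term.
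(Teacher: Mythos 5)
Your proposal is correct and follows essentially the same route as the paper's proof: establish $c_1=c_2$ (so both Theorem~\ref{T:Bloch_cond} and Remark~\ref{R:bloch_cond_reverse} are available), observe that the shifted function $p_-(x+\tau)e^{\kappa(x+\tau)}$ is the Bloch mode of $L_{\lambda,1}$ so that the positive factor $e^{2\kappa\tau}$ drops out of the sign condition, translate \eqref{gen_cond1}/\eqref{gen_cond2} and their mirror versions into conditions on $V_0(\pm\tau)$, $V_0'(\pm\tau)$, and finish \eqref{dis_cond3} by Taylor expansion. The only nitpick is cosmetic: under the stated regularity ($C^1$, with $V_0$ only twice differentiable at $0$) your remainder terms $O(\tau^3)$ and $O(\tau^2)$ are not justified and should be $o(\tau)$, as in the paper, but this changes nothing in the sign argument.
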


\noindent
{\bf Remarks.} (1) The case where $V_0'(0)=0$ and $\sign\tau V_0''(0)>0$ is not covered by the above theorem. We believe that for this case strong ground states do not exist. \\
(2) One can also consider the dislocation problem with two parameters, i.e, 
$$
V(x) = \left\{
\begin{array}{ll}
V_0(x+\tau), & x>0, \vspace{\jot}\\
V_0(x-\tau), & x<0,
\end{array} \right.
\qquad
\Gamma(x) = \left\{
\begin{array}{ll}
\Gamma_0(x+\sigma), & x>0, \vspace{\jot}\\
\Gamma_0(x-\sigma), & x<0
\end{array} \right.
$$
where $\tau,\sigma\in\R$ are the dislocation parameters. If $V_0$, $\Gamma_0$ are even, bounded 1-periodic functions and if 
$w_1$ is a ground state for $L_{\lambda,1} w_1 = \Gamma_1(x) |w_1|^{p-1}w_1$ in $\R$, then $w_2(x):=w_1(-x)$ is a ground state for the problem 
$L_{\lambda,2} w_2 = \Gamma_2(x) |w_2|^{p-1}w_2$ in $\R$. One then easily sees that again we have $c_1=c_2$\footnote{Note that the $\lambda$-dependence of $c_1=c_2$ has been dropped.}. The result of Corollary~\ref{dislocate} (a) immediately applies. For the result of Corollary~\ref{dislocate} (b) one only needs to take the second parts of \eqref{dis_cond2}, \eqref{dis_cond3} into account.

\medskip

\noindent
{\em Proof of Corollary \ref{dislocate}:} In the dislocation case the unperturbed energy levels of ground states satisfy $c_1=c_2$ and thus both versions of Theorem~\ref{T:Bloch_cond} are available. If $u_-(x)= p_-(x)e^{\kappa x}$ is the Bloch mode decaying at $-\infty$ of $-\frac{d^2}{dx^2}+ V_0(x)-\lambda$, then $p_-(x+\tau)e^{\kappa (x+\tau)}$ is the corresponding Bloch mode of the operator $L_{\lambda,1}$. Therefore, condition \eqref{gen_cond1} of Theorem~\ref{T:Bloch_cond} amounts to 
$$
e^{2\kappa\tau}\int_{-1}^0 \Big(V_0(x-\tau)-V_0(x+\tau)\Big) p_-(x+\tau)^2 e^{2\kappa x}\,dx<0,
$$
which is equivalent to \eqref{dis_cond1} of the Corollary~\ref{dislocate}. Likewise \eqref{gen_cond2} amounts to $V_0(-\tau)<V_0(\tau)$ or $V_0(-\tau)=V_0(\tau)$ and $V_0'(-\tau)>V_0'(\tau)$. If we apply the version of Theorem~\ref{T:Bloch_cond} given in Remark~\ref{R:bloch_cond_reverse}, then we get \eqref{dis_cond1'} instead of \eqref{dis_cond1} and $V_0(\tau)<V_0(-\tau)$ or $V_0(-\tau)=V_0(\tau)$ and $V_0'(-\tau)>V_0'(\tau)$. This explains \eqref{dis_cond1}, \eqref{dis_cond1'} as well as \eqref{dis_cond2}. The final condition \eqref{dis_cond3} follows via Taylor-expansion
$$
V_0(\tau)-V_0(-\tau)= 2\tau V_0'(0)+o(\tau), \quad V_0'(\tau)-V_0'(-\tau)= 2\tau V_0''(0)+o(\tau)
$$
from \eqref{dis_cond2}. \qed

\subsection{A Heuristic Explanation of Theorem~\ref{T:Bloch_cond}(b) and Corollary~\ref{dislocate}(b) for $\lambda$ Sufficiently Negative}

We provide next a heuristic explanation of the existence results in the 1D interface problem for $\lambda \ll -1$ in Theorem~\ref{T:Bloch_cond}(b) and thus Corollary~\ref{dislocate} (b). In the following we show how to quickly find a function in $N$ with energy smaller than $c_1 (\leq c_2)$ so that the criterion of Theorem~\ref{main} for the existence of ground states is satisfied.

We restrict the discussion to the case $\Gamma_1\equiv \Gamma_2$. The heuristic part of the analysis is the use of the `common wisdom'\footnote{In \cite{KKP2010} this common wisdom has been proved under similar, but not identical assumptions.} that as $\lambda\to -\infty$ each ground state of the purely periodic problem 
$$
- u '' +V_1(x) u = \Gamma_1(x)|u|^{p-1}u \mbox{ in } \R
$$
is highly localized and concentrates near a point $x_0(\lambda)$. We assume $x_0(\lambda)\to x_0^\ast\in (0,1]$ as $\lambda\to -\infty$ and that $x_0^\ast$ is a point, where $V_1$ assumes its minimum. Moreover, we assume below that even at a small distance (e.g. one half period of $V_1$) from the concentration point the ground state decays exponentially fast with increasing distance from $x_0^*$.

Consider a ground state $w_1(x;\lambda)$. The function $sw_1$ lies in $N$ if
$$s^{p-1} = \frac{\int_{-\infty}^\infty {w_1'}^2 +(V(x)-\lambda)w_1^2 dx}{\int_{-\infty}^\infty \Gamma(x) |w_1|^{p+1} dx}.$$
Because $\Gamma_1\equiv \Gamma_2$ and $w_1\in N_1$, the denominator equals $\int_{-\infty}^\infty\Gamma_1(x)|w_1|^{p+1} dx = \int_{-\infty}^\infty {w_1'}^2 +(V_1(x)-\lambda)w_1^2 dx$. Therefore
$$s^{p-1} = \frac{\int_{0}^\infty {w_1'}^2 +(V_1(x)-\lambda)w_1^2 dx + \int_{-\infty}^0 {w_1'}^2 +(V_2(x)-\lambda)w_1^2 dx}{\int_{-\infty}^\infty {w_1'}^2 +(V_1(x)-\lambda)w_1^2 dx }.$$
Due to the concentration of $w_1$ near $x_0^*>0$ as $\lambda \rightarrow -\infty$ and its fast decay as $|x-x_0^*|$ grows, we see that $s<1$ if $V_2(x)<V_1(x)$ in a left neighborhood of $0$.

Finally,
\[
\begin{split}
J[sw_1] & = \int_{-\infty}^\infty \tfrac{s^2}{2} \left({w_1'}^2 +(V(x)-\lambda)w_1^2 \right) -\tfrac{s^{p+1}}{p+1}\Gamma_1(x)|w_1|^{p+1} dx\\ 
& = s^2\left(\tfrac{1}{2}-\tfrac{1}{p+1}\right) \int_{-\infty}^\infty {w_1'}^2 +(V(x)-\lambda)w_1^2 dx\\
& = s^2\left[J_1[w_1] +  \left(\tfrac{1}{2}-\tfrac{1}{p+1}\right)\int_{-\infty}^0 (V_2(x)-V_1(x))w_1^2 dx\right],
\end{split}
\]
where the second equality follows from $sw_1\in N$. We get $J[sw_1]<J_1[w_1]=c_1$ if $V_2(x)<V_1(x)$ in a left neighborhood of $0$, see Figure~\ref{F:heurist_gen} for a sketch. Note that this calculation does not, however, imply that the function $sw_1$ is a ground state of the interface problem \eqref{int_nls_1d} (with $\Gamma_1\equiv \Gamma_2$).
\begin{figure}[!ht]
  \begin{center}
  \scalebox{0.6}{\includegraphics{./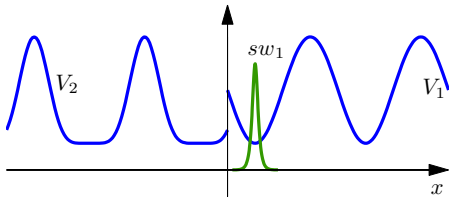}}
  \end{center}
  \caption{An example of a function in $N$ attaining smaller energy than $c_1 (\leq c_2)$ for the 1D interface problem \eqref{int_nls_1d}. Heuristically, the ground state existence conditions of Theorem~\ref{main} are thus satisfied.}
  \label{F:heurist_gen}
\end{figure} 

In the dislocation case, i.e., Corollary~\ref{dislocate} (b), where $c_1=c_2$, the discussion applies analogously if we restrict attention to the case $\Gamma\equiv\Gamma_0\equiv\const$ We denote again $x_0^*=\lim_{\lambda\to -\infty}x_0(\lambda)$ as the point of concentration of a ground state of the purely periodic problem, which now may be any point (left or right of zero) where $V_0$ attains its global minimum. Thus $J[sw_1]<c_1=c_2$ is satisfied if $V_0(x-\tau)<V_0(x+\tau)$ for $x$ in a left neighborhood of $0$ and if we take $x_0^*>0$. Likewise, in the case where $V_0(x-\tau)>V_0(x+\tau)$ for $x$ in a right neighborhood of $0$ we may take $x_0^*<0$. The three possible scenarios, namely $V_0'(0)\neq 0, V_0'(0)=0$ and $V_0$ having a local minimum at $x=0$, and finally $V_0'(0)=0$ and $V_0$ having a local maximum at $x=0$ are depicted schematically in Figure~\ref{F:heurist_disloc}. The full green lines in the columns $\tau>0$ and $\tau<0$ depict functions $sw_1\in N$ with energy smaller than $c_1=c_2$. As the above calculations show, the candidate positioned the closest to $x=0$ produces the smallest energy and has the smallest $s$. It is therefore plotted with the smallest amplitude.
\begin{figure}[!ht]
  \begin{center}
  \scalebox{0.6}{\includegraphics{./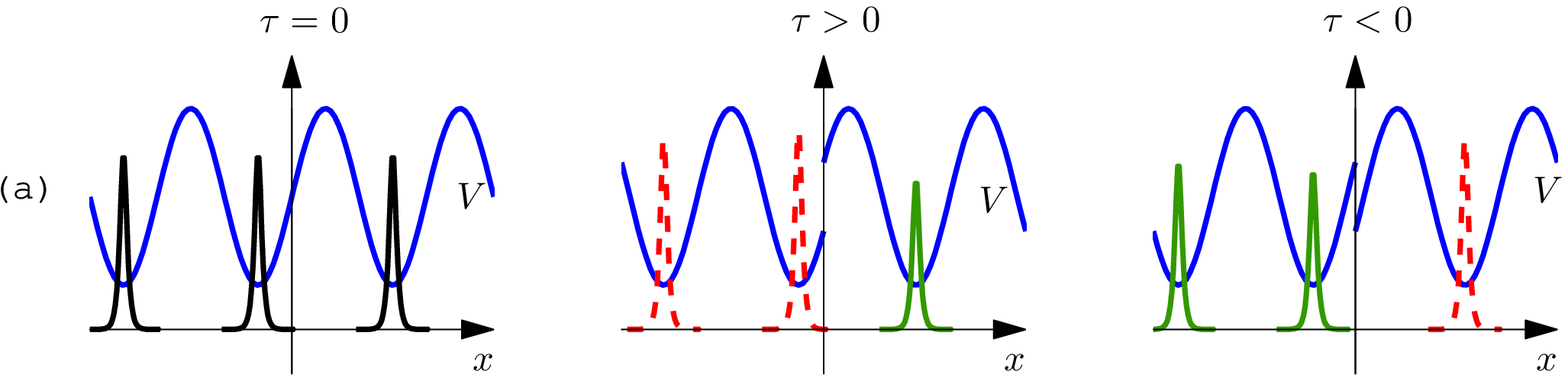}}
  \scalebox{0.6}{\includegraphics{./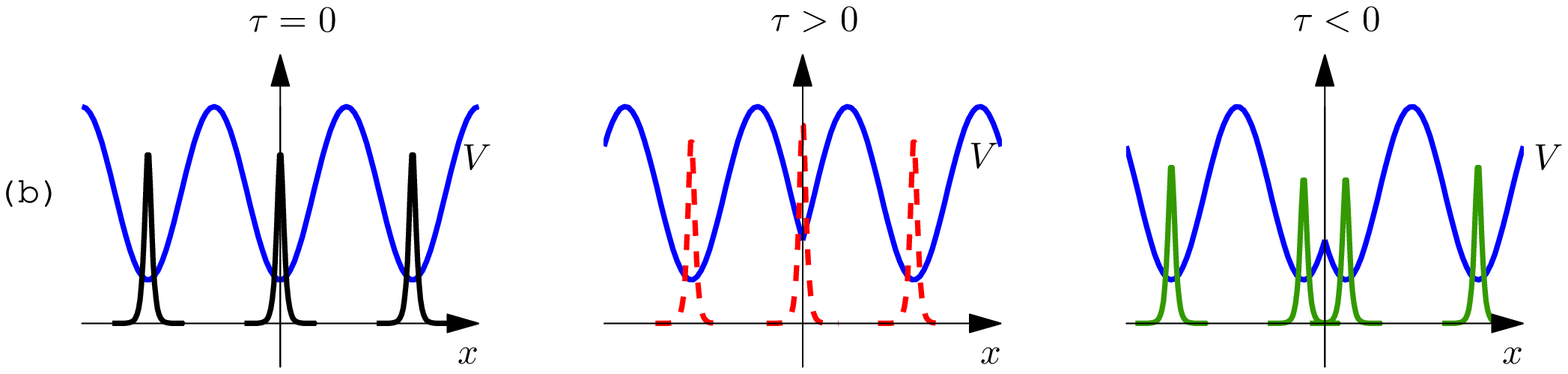}}
  \scalebox{0.6}{\includegraphics{./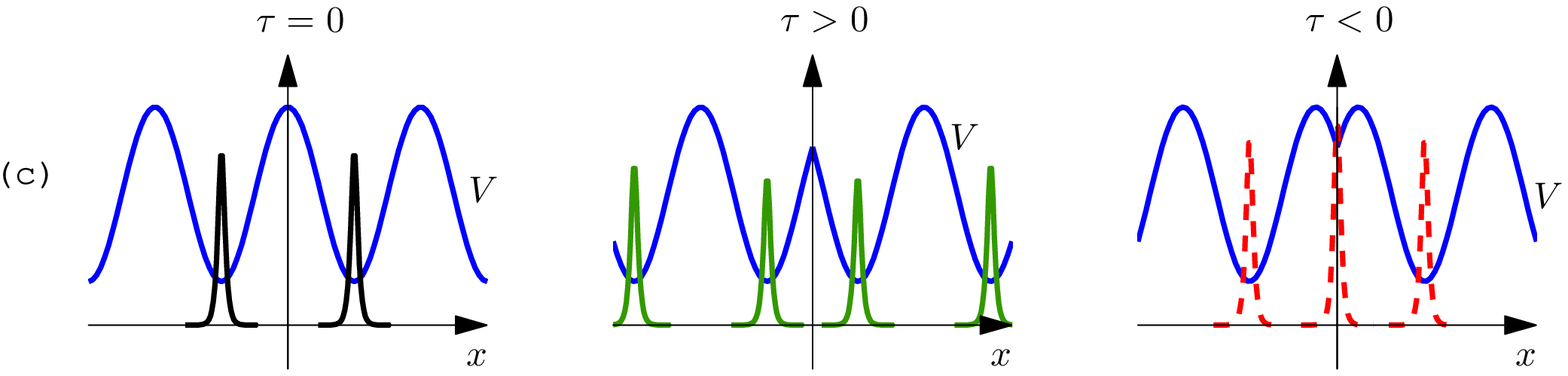}}
  \end{center}
  \caption{Cases (a), (b) and (c) correspond to $V_0\in C^1$ with $V_0'(0)\neq 0$, $V_0$ having a local minimum and $V_0$ having a local maximum at $x=0$, respectively. In the column $\tau=0$ the ground states are plotted in black. In the columns $\tau\neq 0$ the green full and red dashed curves are functions $sw_1 \in N$ with energy smaller and larger than $c_1=c_2$ respectively.}
  \label{F:heurist_disloc}
\end{figure} 

Finally, we mention that in the dislocation case with two dislocation parameters $\tau, \sigma$ (cf. Remark 2 after Corollary~\ref{dislocate}) the above considerations apply if $V_0, \Gamma_0$ are even potentials and if we set $\sigma=0$ so that $\Gamma_1(x)=\Gamma_0(x)=\Gamma_2(x)$. In this setting only cases (b) and (c) of Figure~\ref{F:heurist_disloc} apply.

\section{Discussion, Open Problems}\label{S:discussion}

The above analysis describes the existence of ground state surface gap solitons of \eqref{E:NLS} in the case of two materials meeting at the interface described by the hyperplane $x_1=0$. It would be of interest to generalize this analysis to curved interfaces as well as to several intersecting interfaces with more than two materials.

Besides looking for strong ground states, i.e. global minimizers of the energy $J$ within the Nehari manifold $N$, one can also pose the question of existence of bound states, i.e. general critical points of $J$ in $H^1(\R^n)$, including possibly ground states, i.e. minimizers of $J$ within the set of nontrivial $H^1$ solutions. The existence of such more general solutions is not covered by our results. It is also unclear whether the ground states found in the current paper are unique (up to multiplication with $-1$ and translation by $T_k e_k$ for $k=2,\ldots,n$) and what their qualitative properties are. In particular, it would be interesting to determine the location where the above ground states are `concentrated'. Although their existence is shown using candidate functions shifted along the $x_1$-axis to $+\infty$ or $-\infty$, we conjecture that the ground states are concentrated near the interface at $x_1=0$.

The conditions of our non-existence result in Theorem~\ref{non_ex} and Remark~\ref{rem:curved} agree with the set-up of several optics experiments as well as numerical computations in the literature if one neglects the fact that the periodic structures used in these are finite. In \cite{Szameit07} the authors consider an interface of a homogeneous dielectric medium with $\eps_r=\alpha>0$ and a photonic crystal with $\eps_r=\alpha+Q(x), Q>0$ and 
provide numerics and experiments for surface gap solitons (SGSs) in the semi-infinite gap of the spectrum. The nonlinearity is cubic ($p=3$) and $\Gamma\equiv \text{const.}$ The observed SGSs cannot be modelled as strong ground states of \eqref{int_nls} due to the ordering of $V_1, V_2$ and $\Gamma_1,\Gamma_2$, which enables the application of Theorem~\ref{non_ex}. They could, possibly, be strong ground states if the structure was modelled as a finite block of a photonic crystal with $\eps_r=\alpha+Q(x), Q>0$ embedded in an infinite dielectric with $\eps_r=\alpha$ but this situation is outside the reach of our model. 

A similar situation arises in \cite{Suntsov08}, which studies the interface of two cubically nonlinear photonic crystals with $\Gamma_1\equiv \Gamma_2$ and either $V_1\geq V_2$ or $V_1\leq V_2$ with a strict inequality on a set of nonzero measure. Likewise, in the computations of \cite{do_pel,Blank_Dohnal}, where \eqref{int_nls} is considered in 1D with $V_1\equiv V_2$ and $\Gamma_1,\Gamma_2\equiv \text{const.}$, $\Gamma_1 \neq \Gamma_2$, the SGSs computed in the semi-infinite spectral gap cannot correspond to strong ground states of \eqref{int_nls}.
The findings of \cite{Szameit07,Suntsov08,do_pel,Blank_Dohnal} show that despite the absence of strong ground states bound states may still exist.


\section*{Appendix}

\noindent
{\bf Lemma A1.} {\em Let $V\in L^\infty$ and $L=-\Delta + V(x)$ such that $0\not\in \sigma(L)$. If $1<q<\infty$ then $L^{-1}: L^q(\R^n)\to W^{2,q}(\R^n)$ is a bounded linear operator.}

\medskip

\noindent
{\em Proof.} Since the spectrum of $L$ is stable in $L^q(\R^n)$ with respect to $q\in [1,\infty]$, cf. Hempel, Voigt \cite{he_vo}, we have for all $u \in {\mathcal D}(L)\subset L^q(\R^n)$ that $\|u\|_{L^q(\R^n)} \leq C \|Lu\|_{L^q(\R^n)}$. We need to check that ${\mathcal D}(L)=W^{2,q}(\R^n)$. Here we restrict to $1<q<\infty$. Note that since $V\in L^\infty$, for $u\in {\mathcal D}(L)$ we have that $Lu$, $Vu$ and hence $-\Delta u$ all belong to $L^q(\R^n)$. Therefore
$$
-\Delta u + u = (1-V)u + Lu
$$
and with the help of the Green function $G_{-\Delta +1}$ we find
$$
 u = G_{-\Delta +1}\ast\bigl((1-V)u + Lu¸\bigr).
$$
By the mapping properties of the Green function (cf. mapping properties of Bessel potentials, Stein \cite{stein}), it follows that $u\in W^{2,q}(\R^n)$ and
$$
\|u\|_{W^{2,q}(\R^n)}\leq C\bigl( \|(1-V)u\|_{L^q(\R^n)}+ \|Lu\|_{L^q(\R^n)}\bigr) \leq \bar C \|Lu\|_{L^q(\R^n)}.
$$
Since trivially $W^{2,q}(\R^n)\subset {\mathcal D}(L)$, the proof is done. \qed

\medskip

\noindent
{\bf Lemma A2.} {\em Let $V,\Gamma\in L^\infty$ and let $L=-\Delta + V(x)$ be such that $0<\min\sigma(L)$. If $1<p<2^\ast-1$, then every strong ground state $u_0$ of \eqref{E:NLS} is either positive in $\R^n$ or negative in $\R^n$.}

\medskip

\noindent
{\em Sketch of proof.} Let $u_0$ be a strong ground state. Then $\bar u_0 := |u_0|$ is also a strong ground state and $\bar u_0\not \equiv 0$. Due to the subcritical growth of the nonlinearity we have that locally $\bar u_0$ is a $C^{1,\alpha}$-function. If we define $Z=\{x\in \R^n: \bar u_0(x)=0\}$, then $Z^c=\R^n\setminus Z$ is open and $\nabla\bar u_0\equiv 0$ on $Z$. If we assume that $Z$ is nonempty, then there exists an open ball $B\subset Z^c$ such that $\partial B\cap Z\not = \emptyset$. This contradicts Hopf's maximum principle. Thus $\bar u_0>0$ in $\R^n$ and therefore either $u_0>0$ in $\R^n$ or $u_0<0$ in $\R^n$.
\qed

\bigskip

\noindent
{\bf Acknowledgement.} We thank Andrzej Szulkin (Stockholm University) for discussions leading to the improved hypothesis (H2) instead of (H2').



\end{document}